\documentclass[10pt]{article}

\pdfoutput=1
\usepackage[a4paper,top=1.2in,bottom=1in,left=1.2in,right=1.2in]{geometry}

\usepackage{caption}
\usepackage{titlesec}
\renewcommand{\thesubsection}{\thesection.\arabic{subsection}}
\renewcommand{\thesubsubsection}{\thesubsection.\Alph{subsubsection}}
\titleformat{\section}[hang]{\normalfont\bfseries}{\thesection}{.5em}{}
\titlelabel{\subsubsection}{\empty}
\titleformat{\subsection}{\normalfont\bfseries}{\thesubsection.}{.5em}{}[]
\titleformat{\subsubsection}[runin]{\normalfont}{\thesubsubsection.}{0em}{}[]

\renewcommand{\abstract}[1]{{\gdef\thepoabstract{#1}}}

\usepackage{amsthm}
\usepackage{thmtools}

\makeatletter
\renewcommand\maketitle
{
\thispagestyle{empty}
\rmfamily\selectfont
\ifdefined\@title{\noindent\Large\bfseries\centering \@title \par}\else\fi
\vspace{2em}
\ifdefined\@author{\centering\normalfont \@author \par}
\vspace{.5em}
\ifdefined\thepoaffiliation{\noindent\small\thepoaffiliation\par}\fi\vspace{3em}\else\fi
\ifdefined\thepoabstract{\small\noindent{{\bfseries Abstract}.\;\;}\thepoabstract\par\vspace{2em}}\else\fi
\ifdefined\thepokeywords{{\noindent\bfseries Keywords\;\;}\thepokeywords\par\vspace{5em}}\else\fi
\ifdefined\theporuntitle{\fancyhead[L]{\footnotesize\theporuntitle}}\fi
}
\makeatother

\usepackage[usenames,dvipsnames]{color}
\definecolor{RefColor}{rgb}{0,0,.85}
\definecolor{UrlColor}{rgb}{.5,.5,.5}%
\RequirePackage[colorlinks,linkcolor=RefColor,citecolor=RefColor,urlcolor=UrlColor,linktoc=page]{hyperref}
\hypersetup{pdfinfo={Subject={ }}}
\RequirePackage[OT1]{fontenc}
\usepackage{natbib}

\usepackage[english]{babel}

\usepackage{enumitem}
\setlist[itemize]{leftmargin=1.5em}

\usepackage{amsmath,amssymb,amscd,amsfonts,amsthm,mathtools}
\usepackage[noabbrev,capitalize]{cleveref}
\usepackage{dsfont}
\usepackage{thmtools}
\usepackage{nccmath}
\usepackage{scalerel}

\usepackage{bibentry}

\usepackage{booktabs}
\usepackage{tikz}
\usepackage[low-sup]{subdepth}
\usetikzlibrary{calc}
\usetikzlibrary{decorations.markings,decorations.pathreplacing}
\tikzstyle{mybraces}=[mirrorbrace/.style={
          decoration={brace, mirror},
          decorate},brace/.style={
          decoration={brace},
          decorate}]
\usepackage{pst-node}
\usepackage{tikz-cd}

\usepackage{tocloft}

\cftsetpnumwidth{5cm}
\cftsetrmarg{6cm}

\usepackage{wrapfig}
\usepackage{subcaption}
\usepackage[nottoc]{tocbibind}
\settocbibname{References}

\theoremstyle{plain}
\declaretheoremstyle[postheadspace=.4em,headfont=\bfseries,bodyfont=\itshape,spaceabove=8pt,
spacebelow=10pt]{basic}
\theoremstyle{basic}
\declaretheorem[style=basic,name={Theorem}]{theorem}
\declaretheorem[style=basic,sibling=theorem,name={Lemma}]{lemma}

\declaretheorem[style=basic,sibling=theorem,name={Proposition}]{proposition}
\declaretheorem[style=basic,sibling=theorem,name={Corollary}]{corollary}
\theoremstyle{definition}

\declaretheorem[style=definition,name={Remark}]{remark}
\declaretheorem[style=definition,name={Remark},numbered=no]{remark*}

\usepackage{times}

\allowdisplaybreaks

\usetikzlibrary{shapes.geometric}
\usetikzlibrary{arrows.meta}

\makeatletter 
\newcommand{\htarget}[1]{\Hy@raisedlink{\hypertarget{#1}{}}} 
\makeatother

\usepackage{cancel}
\usepackage{nccmath,tikz}
\usepackage{stmaryrd,scalerel} 

\usepackage{float}

\usetikzlibrary{calc}
\usetikzlibrary{decorations.markings,decorations.pathreplacing}
\usetikzlibrary{shapes.misc}

\definecolor{colorhot}{RGB}{203,41,87}
\definecolor{colorcold}{RGB}{66,116,217}
\definecolor{colormoderate}{RGB}{114,62,195}
\definecolor{colornew}{RGB}{255,140,0}

\newcommand{\tikzfiguretextfont}{\sffamily\small}
\newcommand{\tikzfigurefont}{\tikzfiguretextfont}
\tikzset{
    figure text/.style={
        >={Stealth[length=5pt,width=6pt]},
        every node/.append style={font=\tikzfigurefont}
    },
    figure axis/.style={
        ->,
        line width=.35mm
    },
    figure auxiliary/.style={
        line width=.5mm
    }
}

\DeclareDocumentCommand{\hcancel}{mO{0pt}O{0pt}O{0pt}O{0pt}}{%
    \tikz[baseline=(tocancel.base)]{
        \node[inner sep=0pt,outer sep=0pt] (tocancel) {#1};
        \draw[red] ($(tocancel.south west)+(#2,#3)$) -- ($(tocancel.north east)+(#4,#5)$);
    }%
}%

\newenvironment{proplist}{\begin{enumerate}[
    leftmargin=2.5em,
    labelwidth=0em,
    label=(\roman{enumi}),
    topsep=.1em,
    partopsep=0em,
    itemsep=0em
    ]}
{\end{enumerate}}

\DeclareMathOperator*{\argmin}{argmin}

\def\P{{\mathbb P}}

\def\Var{\text{\rm Var}}

\def\N{\mathbb{N}}

\def\P{{\mathbb P}}

\def\R{\mathbb{R}}

\def\cB{\mathcal{B}}

\def\cE{\mathcal{E}}

\def\cN{\mathcal{N}}

\def\cS{\mathcal{S}}

\DeclareMathOperator{\msum}{\medmath\sum}

\DeclareMathOperator{\mint}{\medmath\int}
\newcommand{\argdot}{{\,\vcenter{\hbox{\tiny$\bullet$}}\,}}
\newcommand{\tagaligneq}{\refstepcounter{equation}\tag{\theequation}}

\newcommand{\msup}{\sup\nolimits}

\newcommand{\ind}{\mathbb{I}}

\newcommand{\mean}{\mathbb{E}}

\usepackage{makecell}

\expandafter\def\expandafter\normalsize\expandafter{%
    \normalsize%
    \setlength\abovedisplayskip{4pt}%
    \setlength\belowdisplayskip{4pt}%
    \setlength\abovedisplayshortskip{-8pt}%
    \setlength\belowdisplayshortskip{2pt}%
}

\begin{document}

\title{Top Singular Value in Sum-Products of Random Matrices}

\author{
     Kevin Han Huang{$^*$} \; and \; Boris Hanin{$^\dagger$}
     \\[1em]
     $^*$University of Warwick \quad $^\dagger$Princeton University
     \\[-2em]
}

\begin{abstract}
   {
      We study the top singular value for a sum of $m$ independent $n\times n$ random matrices, each of which is a product of $N$ i.i.d. $n\times n$ Gaussian matrices. Our main conceptual observation is that when  $m,n,N\rightarrow \infty$, the top singular value coincides with the partition function in a random energy model at the inverse temperature  $\beta=\sqrt{2(N-1)/(n\log m)}$, with  energies depending on the ratio $N/n$. We provide several non-asymptotic results making this approximation precise.
   }
\end{abstract}

\maketitle

\vspace{-2em}

\section{Introduction}

This article concerns the top singular value of a sum of products of i.i.d.~Gaussian matrices,
\begin{align*}
    X \;\coloneqq\; \mfrac{1}{\sqrt{m}} \msum_{i \leq m} X_i \;,
    \qquad 
    X_i \;\coloneqq\; X_{iN} \cdots X_{i1}\;,
    \tagaligneq \label{eq:model}
\end{align*}
where $X_{ij}$ are i.i.d. $n\times n$ matrices with i.i.d. $\cN(0, n^{-1})$ entries. We consider the ``triple-scaling'' asymptotic regime 
\begin{align}\label{eq:triple-scaling-limit}
    N, n, m \rightarrow \infty\;,
\end{align}
and study the top Lyapunov exponent $\lambda_1(X)$ (or equivalently the top singular value $s_1(X)$)
\begin{align}
  \lambda_1(X)=\mfrac{1}{N}  \log s_1(X) :=   \sup_{\theta \in \cS^{n-1}}  \mfrac{1}{N} \log \| X \theta \|,
    \tagaligneq \label{eq:top:Lyapunov}
\end{align}
where $\cS^{n-1} \subset \R^n$ denotes the unit sphere. For a single matrix product ($m=1$), \cite{hanin2021non} showed that the supremum above can be removed, in the sense that for any fixed $\theta$, with high probability,
\begin{align*}
     \sup_{\theta' \in \cS^{n-1}} \mfrac{1}{N} \log \| X_1 \theta' \|
 \approx
    \mfrac{1}{N} \log \| X_1 \theta \|
     \;
\end{align*}
as soon as $N\gg \log n$. We will obtain an analogous statement (see \Cref{lem:sup:to:pointwise}) that this holds for our model when $N,n,m\rightarrow \infty$ under the same condition. As such, to study the top singular value, we first seek to study the random variable
\begin{align*}
    \log \| X \theta \|  \qquad \text{ for any fixed } \theta \in \cS^{n-1}\;,
\end{align*}
which characterizes the effect of $X$ on a fixed $1$-dimensional subspace. 

\vspace{.5em}

A key observation is that when $n,N,m\gg 1$, the random variable $\log \| X \theta \|$ is well-approximated by 
\[
Z_{m,n,N} \;=\; - \mfrac{N-1}{2n} + \mfrac{1}{2} \log \mfrac{1}{m} \msum_{i=1}^m e^{-\beta \sqrt{\log m} E_i}
\;,
\]
which is, up to a recentering and rescaling, the log-partition function of a random energy model (REM) at inverse temperature 
\begin{align*}
    \beta 
    \;\coloneqq\;
    \mfrac{\sqrt{2 (N-1)}}{\sqrt{n \log m}}
    \;.
    \tagaligneq \label{eq:expression:beta}
\end{align*}
The REM involves $m$ i.i.d.~non-Gaussian energies $(E_i)_{i =1,\ldots,m}$, whose distribution depends on $N/n$, each corresponding to the contribution of one $N$-fold random matrix product $X_i$. The precise statement is in \Cref{cor:log:Hanson:Wright} and \Cref{eq:empirical:REM:approx}. In the triple scaling limit \eqref{eq:triple-scaling-limit}, the log-partition function $Z_{m,n,N}$ coincides with the limiting log-partition function of a Gaussian random energy model, given (after the same renormalization) by
\begin{align*}
    Z
    \;\coloneqq&\;
    \begin{cases}
            0
            &
            \text{ if }
            \beta \leq \sqrt{2}
            \;,
            \\
            - \frac{(\beta - \sqrt{2})^2}{4} \, \log m
            & 
            \text{ if } 
            \beta > \sqrt{2}
            \;.
    \end{cases}
   \tagaligneq \label{eq:limit}
\end{align*}
This reveals a surprisingly complex phase diagram (Figure \ref{fig:double:scaling}) for the top singular value of $X$. Notably, $s_1(X)$ undergoes a phase transition as the inverse temperature parameter $\beta$ crosses the threshold $\sqrt{2}$: In the high-temperature regime when $\beta \leq \sqrt{2}$, $s_1(X)$ is characterized by the massive number of configurations with typical energies, i.e.~many matrix products contribute to the value of $\log \| X \theta \|$. In the low-temperature regime when $\beta > \sqrt{2}$, $s_1(X)$ is dominated by a small number of configurations with excessively low energies, i.e.~a few matrix products $X_i$'s with small values of $\|X_i \theta\|$ dominate. The next result makes this formal.

\begin{theorem}[Effect of $X$ on a fixed $1$-dimensional subspace] \label{thm:pointwise} There exist universal constants $c, C > 0$ such that the following holds for any $\epsilon \in (0,1)$:
\begin{proplist}
    \item If $\beta = o(1)$, then with probability at least $1 - \epsilon^{-2} e^{ - c \log m } - 2 e^{-cn\epsilon^2}$, we have
    \begin{align*}
        \big|  
            \log \| X \theta \|
            -
            Z
        \big| 
        \;\leq\; 
        C
        \Big(
            \mfrac{\epsilon}{1-\epsilon}    
            +
            \mfrac{\beta^2 \log m}{n}
        \Big)
        \;;
    \end{align*}
    \item If $\beta = \Omega(1)$, $\log m = o(N^{1/3})$ and $N=o(n^3)$, then  
    \begin{align*}
        \mfrac{1}{\beta^2 \log m}
        \big|  
            \log \| X \theta \|
            -
            Z
        \big| 
        \;\leq\; 
        C \Big( \mfrac{\epsilon}{(1-\epsilon) \beta \log m}
                    +
                    \mfrac{1}{ \beta (\log m)^{1/4}}
                \Big)
        \;
    \end{align*}
    with probability at least $1 - (1+ \epsilon^{-2}) e^{ - c  \, (\log m)^{3/4} } - 2e^{-cn\epsilon^2}$.
\end{proplist}
\end{theorem}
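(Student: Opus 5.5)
We write $Y_i := X_i\theta$, so that $X\theta = m^{-1/2}\sum_i Y_i$. Conditionally on the last-layer inputs $v_i := X_{i,N-1}\cdots X_{i1}\theta$, the vectors $Y_i = X_{iN} v_i$ are independent Gaussian vectors in $\R^n$ with covariance $n^{-1}\|v_i\|^2 I_n$, since the matrices $X_{iN}$ are independent across $i$. Hence, conditionally on $(v_i)$,
\[
X\theta \;\overset{d}{=}\; \Big(\mfrac{1}{m}\msum_{i\le m}\|v_i\|^2\Big)^{1/2} g, \qquad g\sim \cN(0,n^{-1}I_n).
\]
Two consequences follow. First, $\|g\|^2$ concentrates around $1$: $|\|g\|^2-1|\le \epsilon$ with probability $1-2e^{-cn\epsilon^2}$. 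This gives $|\log\|g\||\lesssim \epsilon/(1-\epsilon)$, which accounts for the $\epsilon/(1-\epsilon)$ terms and the $2e^{-cn\epsilon^2}$ failure probability. Second, $\log\|X\theta\| \approx \tfrac12\log \tfrac1m\sum_i \|v_i\|^2$. For a single product, $\log\|v_i\|^2$ is a sum of $N-1$ independent $\log(\chi^2_n/n)$ variables. Each has mean $\approx -1/n$ and variance $\approx 2/n$, so $\log\|v_i\|^2 = -\tfrac{N-1}{n} - \sqrt{2(N-1)/n}\,\tilde E_i$, with $\tilde E_i$ i.i.d., centered, and of unit variance. Writing $\sqrt{2(N-1)/n}=\beta\sqrt{\log m}$ turns the average exactly into the REM partition function $Z_{m,n,N}$. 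I would attribute this reduction to \Cref{cor:log:Hanson:Wright}, modulo the exact mean of $\log\chi^2_n$, which differs from $-1/n$ by $O(1/n^2)$ per layer and is absorbed into the error.

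The remaining task is to compare the REM log-partition function with $Z$. We have $\mean \|v_i\|^2=1$, and the $\|v_i\|^2$ are i.i.d.

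\emph{Case (i), $\beta=o(1)$.} Apply Chebyshev to $S:=\tfrac1m\sum\|v_i\|^2$. Its variance is $m^{-1}\Var\|v_i\|^2 = m^{-1}((1+2/n)^{N-1}-1)\le m^{-1}(e^{\beta^2\log m}-1)$. Since $\beta=o(1)$, this is at most $m^{-1+o(1)}\le e^{-c\log m}$. Hence $|S-1|\le\epsilon$ with probability at least $1-\epsilon^{-2}e^{-c\log m}$, giving $|\tfrac12\log S|\lesssim\epsilon/(1-\epsilon)$. The $O(\beta^2\log m/n)=O(N/n^2)$ term accounts for the mean-correction discrepancy noted above.

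\emph{Case (ii), $\beta=\Omega(1)$.} I would follow the standard REM second-moment/truncation argument. Introduce the threshold $\ell_* = \sqrt{2\log m}$ for the energies. For $\beta\le\sqrt2$, truncate the terms $e^{-\beta\sqrt{\log m}\,\tilde E_i}$ at $\tilde E_i \ge -\ell_*+ (\log m)^{1/4}$. Show that the truncated sum has mean $\approx 1$ and relative variance $\le e^{-c(\log m)^{3/4}}$, and that with the same probability no energy lies below $-\ell_*-(\log m)^{1/4}$. For $\beta>\sqrt2$, the sum is sandwiched between its maximal term, which is $\ge e^{\beta\sqrt{\log m}(\ell_*-(\log m)^{1/4})}$ with high probability, and $m$ times the bound on the maximum. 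This yields $Z$ up to an additive error of $O(\beta(\log m)^{3/4})$. Dividing by $\beta^2\log m$ gives the $1/(\beta(\log m)^{1/4})$ term.

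The key input is a moderate-deviation estimate for $\tilde E_i$: its tail must match the Gaussian tail $e^{-x^2/2(1+o(1))}$ up to $x\approx\sqrt{2\log m}$ with relative precision $(\log m)^{-1/4}$. Since $\tilde E_i$ is a normalized sum of $N-1$ i.i.d. $\log\chi^2_n$ variables, Cramér's theorem (cumulant expansion) gives $\log\P(\tilde E_i>x) = -\tfrac{x^2}{2}(1+O(x/\sqrt N)+O(x/\sqrt{n}\cdot\ldots))$. The conditions $\log m=o(N^{1/3})$ and $N=o(n^3)$ are exactly what make the third-cumulant correction $x^3/\sqrt N$ and the $\chi^2$ skewness contributions negligible at $x\sim\sqrt{\log m}$. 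Establishing this uniform moderate-deviation bound with explicit errors, together with the exponential-moment computation $\mean e^{-t\tilde E}$ at $t\approx\beta\sqrt{\log m}$, is the main obstacle. I would do it by computing $\log\mean(\chi^2_n/n)^{s}$ exactly via Gamma functions and expanding.
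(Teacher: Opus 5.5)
The genuine gap is the upper bound in case (ii) for $\beta>\sqrt2$. Write $L=\log m$ and $S=\frac1m\sum_i\|v_i\|^2=\frac1m\sum_i e^{-\beta^2L/2-\beta\sqrt L\,\tilde E_i}$. On the event $\min_i\tilde E_i\ge-\ell_*-L^{1/4}$, you bound the sum by $m$ times the bound on its largest term. This gives
\[
\log S\;\le\;\sqrt2\,\beta L-\tfrac{\beta^2}{2}L+\beta L^{3/4},
\qquad\text{whereas}\qquad
2Z\;=\;\sqrt2\,\beta L-\tfrac{\beta^2}{2}L-L .
\]
The two sides of your sandwich differ by a factor $m$. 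You therefore lose an additive $\log m$ in $\log S$, i.e.\ $\tfrac12\log m$ in $\log\|X\theta\|$. Normalized by $\beta^2\log m$, this is an error of order $1/\beta^2$, which is not $O(1/(\beta(\log m)^{1/4}))$ unless $\beta\gtrsim L^{1/4}$. For $\beta\in(\sqrt2,2\sqrt2)$ your upper bound on $\log\|X\theta\|$ is even positive while $Z<0$. Your lower bound $S\ge\frac1m\max_i e^{-\beta^2L/2-\beta\sqrt L\tilde E_i}$ is fine.

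The missing ingredient is that only $e^{O(L^{3/4})}$ energies, not $m$ of them, lie within $L^{1/4}$ of the edge $-\ell_*$. A first-moment bound captures this. Set $t=\beta\sqrt L$ and $a'=\ell_*+L^{1/4}$, and integrate the Cram\'er tail $\P(-\tilde E>u)\lesssim e^{-u^2/2}$, $0\le u\le a'$, by parts:
\[
\mean\big[e^{-t^2/2-t\tilde E}\,\mathbf{1}\{\tilde E\ge-a'\}\big]
\;\le\;2e^{-t^2/2}+O(\beta L)\,e^{-(t-a')_+^2/2}
\;=\;e^{-(\beta-\sqrt2)^2L/2+O(\beta L^{3/4})}.
\]
Markov's inequality with slack $e^{L^{3/4}}$, applied to $S$ on the truncation event, then gives the correct upper bound. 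This counting is what the paper's binning does: on $A_\epsilon$, $F_m(\cB_j)\le(1+\epsilon)P(\cB_j)$, so the top bin carries about $mP(\cB_K)$ terms rather than $m$.

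The rest of your plan is sound. The reduction and case (i) are the paper's \Cref{lem:Hanson:Wright} and \Cref{prop:ultrahot}; since $\mean\|v_i\|^2=1$ exactly, the $\beta^2\log m/n$ term is pure slack. Truncation plus a second-moment bound is a legitimate alternative to binning for $\beta\le\sqrt2$. There, two loose ends are harmless. First, the truncated mean is not $\approx1$ when $\sqrt2-\beta=O(L^{-1/4})$; it can be as small as $\Phi^c(L^{1/4})$, which costs only $O(\sqrt L)$. Second, your upper bound does not cover the energies in $[-\ell_*-L^{1/4},-\ell_*+L^{1/4})$, but that bound follows directly from Markov since $\mean S=1$.
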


\begin{remark}[Conditions arising from the Gaussian approximation of non-Gaussian energies] The condition $\log m = o(N^{1/3})$ arises because we approximate the individual non-Gaussian energies by Gaussians using Cr\'amer's moderate deviation theorem \citep{cramer1938nouveau} at a location $x \sim \sqrt{\log m}$, and Cr\'amer's theorem introduces a condition $x = o(N^{1/6})$. The condition $N=o(n^3)$ arises from a first-order approximation of the mean and variance of the non-Gaussian energies. We conjecture that both conditions are improvable by a finer approximation of the energies.
\end{remark}

\vspace{.5em}

By approximating the top Lyapunov exponent of $X$ by $\log \| X \theta \|$ for some fixed $\theta \in \cS^{n-1}$, we obtain the following:

\begin{theorem}[Top Lyapunov exponent of $X$] \label{thm:top:Lyapunov}  There exist universal constants $c, C, C' > 0$ such that the following holds for any $\epsilon \in (0,1)$ and $\alpha > 0$:
\begin{proplist}
    \item If $\beta = o(1)$, then with probability $1 - \epsilon^{-2} e^{ - c \log m } - 2 e^{-cn\epsilon^2} - C' n^{-\alpha}$, we have
    \begin{align*}
        \big|  
            \log s_1(X)
            -
            Z
        \big| 
        \;\leq\; 
        C
        \Big(
            \mfrac{\epsilon}{1-\epsilon}
            +
            \mfrac{\beta^2 \log m}{n}
        \Big)
        +
        \mfrac{(1+2\alpha) \log n}{2} 
        \;;
    \end{align*}
    \item If $\beta = \Omega(1)$,  $\log m = o(N^{1/3})$ and $N=o(n^3)$, then  with probability $1 - (1+ \epsilon^{-2}) e^{ - c  \, (\log m)^{3/4} } - 2e^{-cn\epsilon^2} - C' n^{-\alpha / 2}$, we have 
    \begin{align*}
        \mfrac{1}{\beta^2 \log m}
        \big|  
            \log s_1(X)
            -
            Z
        \big| 
        \;\leq\; 
        C \Big( \mfrac{\epsilon}{(1-\epsilon) \beta \log m}
                    +
                    \mfrac{1}{\beta (\log m)^{1/4}}
                \Big)
        +
        \mfrac{(1+2\alpha) \log n}{2 \beta^2 \log m}
        \;.
    \end{align*}
\end{proplist}
\end{theorem}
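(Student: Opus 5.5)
We will deduce \Cref{thm:top:Lyapunov} from \Cref{thm:pointwise} by a sandwich argument that compares $s_1(X)$ with $\|X\theta\|$ for a single fixed unit vector. The deviation term $\frac{(1+2\alpha)\log n}{2}$ in the statement is exactly the size of the correction such a comparison produces. So the whole task is to show
\[
\log\|X\theta\| \;\le\; \log s_1(X) \;\le\; \log\|X\theta\| + \tfrac{(1+2\alpha)\log n}{2}
\]
with the stated failure probabilities. We then combine this with the two cases of \Cref{thm:pointwise} using the triangle inequality and a union bound. In case (ii) we also divide the extra term by $\beta^2\log m$.

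\textbf{Lower bound.} The inequality $\log s_1(X)\ge \log\|X\theta\|$ is deterministic, since $s_1(X)=\sup_{\theta'}\|X\theta'\|$. It needs no probability at all.

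\textbf{Upper bound.} The plan is to draw $\theta$ uniformly on $\cS^{n-1}$, independently of $X$. This is legitimate because $X$ is rotationally invariant: $XO\overset{d}{=}X$ for orthogonal $O$, since each $X_{i1}O\overset{d}{=}X_{i1}$. Hence $\|X\theta\|$ for a random $\theta$ has the same law as for a fixed $\theta$, and \Cref{thm:pointwise} still applies.

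Now let $v_1$ be a top right singular vector of $X$. Then
\[
\|X\theta\|\;\ge\; s_1(X)\,|\langle \theta, v_1\rangle|.
\]
Conditionally on $X$, the vector $\theta$ is still uniform and independent of $v_1$, so $\langle\theta,v_1\rangle$ has the law of one coordinate of a uniform point on the sphere. Its density is bounded by $C\sqrt n$, which gives the anti-concentration bound
\[
\P\big(|\langle\theta,v_1\rangle|\le t\big)\;\le\; C'\sqrt n\, t .
\]
Taking $t=n^{-(1+2\alpha)/2}$ yields failure probability $C' n^{-\alpha}$. Off this event,
\[
\log s_1(X)\;\le\;\log\|X\theta\|+\tfrac{1+2\alpha}{2}\log n .
\]
This is essentially the same mechanism behind \Cref{lem:sup:to:pointwise}. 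For case (ii) we would use $t=n^{-(1+\alpha)/2}$ instead, giving failure probability $C'n^{-\alpha/2}$ and an even smaller additive term; this matches the stated probability with some room to spare.

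\textbf{Combining.} Writing
\[
\big|\log s_1(X)-Z\big|\;\le\;\big|\log\|X\theta\|-Z\big|+\big(\log s_1(X)-\log\|X\theta\|\big),
\]
and using that the last term lies in $\big[0,\tfrac{(1+2\alpha)\log n}{2}\big]$, the union bound gives both parts of the theorem.

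\textbf{Main obstacle.} The main subtlety is justifying that \Cref{thm:pointwise} transfers to a random $\theta$ independent of $X$. This needs the rotational invariance of $X$ together with Fubini's theorem, because the pointwise bound holds uniformly in $\theta$. Everything else is routine.
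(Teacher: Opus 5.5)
Your proof is correct. Its skeleton (\Cref{thm:pointwise}, a comparison of $\log s_1(X)$ with $\log\|X\theta\|$, then a union bound and the triangle inequality) matches the paper's. The difference is in how you get the comparison step.

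The paper simply invokes \Cref{lem:sup:to:pointwise}, imported from Proposition~8.1 of Hanin--Paouris. That lemma is stated for a deterministic $\theta$, which is why right-rotational invariance of $X$ is needed there, and its failure probability is $(C\epsilon)^{1/2}$. Taking $\epsilon=n^{-\alpha}$ gives the additive term $\frac{(1+2\alpha)\log n}{2}$ at cost $C'n^{-\alpha/2}$.

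You instead prove the comparison from scratch. You take $\theta$ uniform and independent of $X$, and use $\|X\theta\|\ge s_1(X)|\langle\theta,v_1\rangle|$ together with the $O(\sqrt n)$ density bound for a spherical coordinate. This gives a bound linear in $t$ rather than square-root. The argument is self-contained and strictly sharper: it yields $C'n^{-\alpha}$ with the same additive term, which is exactly what part (i) asserts. The paper's argument as written only delivers $C'n^{-\alpha/2}$ in part (i) as well. Getting $n^{-\alpha}$ through the lemma would require $\epsilon=n^{-2\alpha}$ and the larger term $\frac{(1+4\alpha)\log n}{2}$.

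The obstacle you flag is lighter than you suggest. Since \Cref{thm:pointwise} holds for every fixed $\theta$ with a $\theta$-independent probability, independence and Fubini alone transfer it to the random $\theta$. The anti-concentration step is conditional on $X$. So your route never actually uses rotational invariance of $X$. Nothing is lost by randomizing $\theta$, because the target event $\{|\log s_1(X)-Z|\le\cdot\}$ does not involve $\theta$.
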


\vspace{.5em}

\begin{figure}[t]
    
    \def\phasefigwidth{15}
    \def\phasefigheight{15}
    \centering
    \begin{tikzpicture}[scale=.36, figure text]
        \draw[figure axis] (0,0) -- ({\phasefigwidth+1},0) node[right] {$N$};
        \draw[figure axis] (0,0) -- (0,{\phasefigheight+1}) node[above] {$n$};
        
        \draw[line width=.2em,color=colorhot] (0,\phasefigheight) -- ({.5*\phasefigwidth},\phasefigheight);
        \node[align=left, anchor=north west] at (0,{\phasefigheight}) {
                \scriptsize \textcolor{colorhot}{Free Probability}
                \\
                \scriptsize \textcolor{colorhot}{\emph{(fixed $N$)}}
        };

        \node[align=left, anchor=south west, draw=black, inner sep=2pt] at  (0.9,{\phasefigheight+.8}) {
                \scriptsize \textcolor{colorhot}{$\frac{\log (N+1)}{2}
                + \frac{N}{2} \log \big( 1 + \frac{1}{N} \big) + o_\P(1)$,}
                \\
                \scriptsize {Different from $\log \| X \theta\| \overset{d}{\approx} \cN(0, \frac{N}{2n} )$}
        };

        \draw[line width=.2em,color=colorcold] (\phasefigwidth,{.5*\phasefigheight}) -- (\phasefigwidth,0);
        \node[align=right, anchor=east] at ({\phasefigwidth},{.2*\phasefigheight}) {
                \scriptsize \textcolor{colorcold}{Ergodic}
                \\
                \scriptsize \textcolor{colorcold}{Theory}
                \\
                \scriptsize \textcolor{colorcold}{\emph{(fixed $n$)}}
        };

        \node[align=left, anchor=west, draw=black, inner sep=2pt] at ({\phasefigwidth+.8},{.2*\phasefigheight}) {
                \scriptsize \textcolor{colorcold}{$N \big(\frac{\log (2/n)}{2} + \frac{\psi(n/2)}{2}  + o_\P(1) \big)$,}
                \\
                \scriptsize {Same as $\log \|X \theta \|$}
        };

        \draw[line width=.2em, color=colormoderate] ({.5*\phasefigwidth},\phasefigheight) .. controls ({.62*\phasefigwidth},\phasefigheight) and ({.72*\phasefigwidth},{.88*\phasefigheight}) .. ({.8*\phasefigwidth},{.8*\phasefigheight}) .. controls ({.86*\phasefigwidth},{.74*\phasefigheight}) and (\phasefigwidth,{.62*\phasefigheight}) .. (\phasefigwidth,{.5*\phasefigheight});

        \draw[->, dotted, figure auxiliary, color=colormoderate] (0,0) -- ({.8*\phasefigwidth},{.8*\phasefigheight});
        
        \node[anchor=east] at ({.5*\phasefigwidth},{.55*\phasefigheight}) {\textcolor{colormoderate}{$\frac{N}{n} \rightarrow \gamma \in (0, \infty)$}};

        \node[align=left, anchor=south west, draw=black, inner sep=4pt] at ({.8*\phasefigwidth+1.5},{.8*\phasefigheight-.2}) {
                \scriptsize \textcolor{colormoderate}{
                 Unknown
                }
        };

    \end{tikzpicture} 
    \caption{
        Known results on the value of $ \log s_1(X)$ when $m=1$ under the single scaling regime, where only one of $n$ and $N$ grows with the other fixed. $\psi$ denotes the digamma function. For the double-scaling regime, where $N/n \rightarrow \gamma \in (0,\infty)$, results are only known for (i) the complex Gaussian case (Theorem 1.2 of \cite{liu2023lyapunov}): $\log s_1(X) \approx \frac{\log n}{2} + F(\gamma) + o_\P(1 )$, where $F(\gamma)$ is a generic quantity that depends only on $\gamma$; (ii) for a fixed  $\theta \in \cS^{n-1}$, where $\log \| X \theta\| \overset{d}{\rightarrow} \cN( - \frac{\gamma}{2}, \frac{\gamma}{2}  ) $.
    }
    \label{fig:double:scaling}
\end{figure}

\begin{figure}[t]
    
    \def\phasefigwidth{27}
    \def\phasefigheight{22}
    \centering
    \begin{tikzpicture}[scale=.36, figure text]
        \coordinate (phaseorangestart) at ({.73*\phasefigwidth},{.87*\phasefigheight});
        \coordinate (phaseorangeupper) at ({.78*\phasefigwidth},{.82*\phasefigheight});
        \coordinate (phasebottomdivider) at ({.97245532*\phasefigwidth},{.59843404*\phasefigheight});
        \begin{scope}
            \clip (0,0) -- (0,\phasefigheight) -- ({.5*\phasefigwidth},\phasefigheight)
                .. controls ({.6*\phasefigwidth},\phasefigheight) and ({.68*\phasefigwidth},{.92*\phasefigheight}) .. (phaseorangestart)
                .. controls ({.75*\phasefigwidth},{.85*\phasefigheight}) and ({.77*\phasefigwidth},{.83*\phasefigheight}) .. (phaseorangeupper)
                .. controls ({.79*\phasefigwidth},{.81*\phasefigheight}) and ({.795*\phasefigwidth},{.805*\phasefigheight}) .. ({.8*\phasefigwidth},{.8*\phasefigheight})
                .. controls ({.84338186*\phasefigwidth},{.75661814*\phasefigheight}) and ({.92858562*\phasefigwidth},{.68186986*\phasefigheight}) .. (phasebottomdivider)
                .. controls ({.98926034*\phasefigwidth},{.56647257*\phasefigheight}) and (\phasefigwidth,{.53323628*\phasefigheight}) .. (\phasefigwidth,{.5*\phasefigheight})
                -- (\phasefigwidth,0) -- cycle;
            \fill[colorcold!8] (0,0) -- ({1.3*\phasefigwidth},{.8*\phasefigheight}) -- (\phasefigwidth,0) -- cycle;
            \fill[colormoderate!8] (0,0) -- ({1.3*\phasefigwidth},{.8*\phasefigheight}) -- (phaseorangeupper) -- cycle;
            \fill[colorhot!8] (0,0) -- (phaseorangeupper)
                -- ({\phasefigwidth+2},{\phasefigheight+2}) -- (0,{\phasefigheight+2}) -- cycle;
        \end{scope}

        \draw[figure axis] (0,0) -- ({\phasefigwidth+1},0) node[right] {$N$};
        \draw[figure axis] (0,0) -- (0,{\phasefigheight+1}) node[above] {$n$};
        
        \draw[line width=.2em,color=colorhot] (0,\phasefigheight) -- ({.5*\phasefigwidth},\phasefigheight);
        \node[align=left, anchor=north west] at ({\phasefigwidth*0.1},{\phasefigheight}) {
                 \textcolor{colorhot}{Ultra-high temperature}
                 \\
                 \textcolor{colorhot}{with $\beta=o(1)$}
        };

        \node[align=right, anchor=north east, inner sep=2pt] at  ({\phasefigwidth*0.9},{\phasefigheight*0.7-.5}) {
               \textcolor{colormoderate}{High temperature}
               \\
               \textcolor{colormoderate}{with $\beta < \sqrt{2}$}
        };

        \node[align=left, anchor=south west, draw=black, inner sep=2pt] at  ({\phasefigwidth*0.5},{\phasefigheight+.4}) {
                 \textcolor{colorhot}{$
                 O_\P\big( \frac{\beta^2 \log m}{n} \big)
                 +
                 o_\P(1) 
                 =
                  O_\P\big( \frac{N}{n^2} \big)
                  +
                  o_\P(1) 
                 $}
        };

        \draw[line width=.2em,color=colorcold] (\phasefigwidth,{.5*\phasefigheight}) -- (\phasefigwidth,0);
        \node[align=right, anchor=east] at ({\phasefigwidth},{.2*\phasefigheight}) {
                 \textcolor{colorcold}{Low}
                \\
                 \textcolor{colorcold}{temperature}
                 \\
                 \textcolor{colorcold}{with $\beta \geq \sqrt{2}$}
        };

        \node[align=left, anchor=west, draw=black, inner sep=2pt] at ({\phasefigwidth+.5},{.2*\phasefigheight}) {
                \scriptsize \textcolor{colorcold}{$
                    - \frac{(\beta - \sqrt{2})^2 (\log m )}{4} ( 1 + o_\P(1))
                    $}
        };

        \draw[line width=.2em, color=colorhot] ({.5*\phasefigwidth},\phasefigheight)
            .. controls ({.6*\phasefigwidth},\phasefigheight) and ({.68*\phasefigwidth},{.92*\phasefigheight}) .. (phaseorangestart)
            .. controls ({.75*\phasefigwidth},{.85*\phasefigheight}) and ({.77*\phasefigwidth},{.83*\phasefigheight}) .. (phaseorangeupper);

        \draw[->, dotted, figure auxiliary, color=colormoderate] (0,0) -- ({.65*\phasefigwidth},{.94*\phasefigheight});

        \node[anchor=south east, align=left] at ({.47*\phasefigwidth-.6},{.6*\phasefigheight}) {
            \textcolor{colormoderate}{$\frac{N}{n} \rightarrow \gamma \in (0, \infty)$}
        };

        \draw[line width=.2em, color=colormoderate]  (phaseorangeupper)
            .. controls ({.79*\phasefigwidth},{.81*\phasefigheight}) and ({.795*\phasefigwidth},{.805*\phasefigheight}) .. ({.8*\phasefigwidth},{.8*\phasefigheight})
            .. controls ({.84338186*\phasefigwidth},{.75661814*\phasefigheight}) and ({.92858562*\phasefigwidth},{.68186986*\phasefigheight}) .. (phasebottomdivider);

        \draw[line width=.2em, color=colorcold]  (phasebottomdivider)
            .. controls ({.98926034*\phasefigwidth},{.56647257*\phasefigheight}) and (\phasefigwidth,{.53323628*\phasefigheight}) .. (\phasefigwidth,{.5*\phasefigheight});

        \draw[dashed, figure auxiliary, color=colornew] (0,0) -- ({1.3*\phasefigwidth},{0.8*\phasefigheight});

        \node[inner sep=2pt] at ({1.25*\phasefigwidth},{0.85*\phasefigheight}){\textcolor{colornew}{$\beta=\sqrt{2}$}};

        \node[align=left, anchor=south west, draw=black, inner sep=2pt] at ({.86*\phasefigwidth+.5},{.72*\phasefigheight+.5}) {
                 \textcolor{colormoderate}{$O_\P( \beta(\log m)^{3/4} ) $ 
                }
        };

    \end{tikzpicture}
    \caption{
        Our results about the value of $\log \|X \theta \|$ under the triple scaling where $m$ grows with $n$ and $N$, which is determined not by $\lim \frac{N}{n} = \gamma$ but by the inverse temperature $\beta = \frac{\sqrt{2(N-1)}}{\sqrt{n \log m}} $.
    }
\end{figure}

\begin{figure}[t]
    
    \def\phasefigwidth{27}
    \def\phasefigheight{22}
    \centering
    \begin{tikzpicture}[scale=.36, figure text]
        \coordinate (phaseorangestart) at ({.73*\phasefigwidth},{.87*\phasefigheight});
        \coordinate (phaseorangeupper) at ({.78*\phasefigwidth},{.82*\phasefigheight});
        \coordinate (phasebottomdivider) at ({.97245532*\phasefigwidth},{.59843404*\phasefigheight});
        \begin{scope}
            \clip (0,0) -- (0,\phasefigheight) -- ({.5*\phasefigwidth},\phasefigheight)
                .. controls ({.6*\phasefigwidth},\phasefigheight) and ({.68*\phasefigwidth},{.92*\phasefigheight}) .. (phaseorangestart)
                .. controls ({.75*\phasefigwidth},{.85*\phasefigheight}) and ({.77*\phasefigwidth},{.83*\phasefigheight}) .. (phaseorangeupper)
                .. controls ({.79*\phasefigwidth},{.81*\phasefigheight}) and ({.795*\phasefigwidth},{.805*\phasefigheight}) .. ({.8*\phasefigwidth},{.8*\phasefigheight})
                .. controls ({.84338186*\phasefigwidth},{.75661814*\phasefigheight}) and ({.92858562*\phasefigwidth},{.68186986*\phasefigheight}) .. (phasebottomdivider)
                .. controls ({.98926034*\phasefigwidth},{.56647257*\phasefigheight}) and (\phasefigwidth,{.53323628*\phasefigheight}) .. (\phasefigwidth,{.5*\phasefigheight})
                -- (\phasefigwidth,0) -- cycle;
            \fill[colorcold!8] (0,0) -- ({1.3*\phasefigwidth},{.8*\phasefigheight}) -- (\phasefigwidth,0) -- cycle;
            \fill[colormoderate!8] (0,0) -- ({1.3*\phasefigwidth},{.8*\phasefigheight}) -- (phaseorangeupper) -- cycle;
            \fill[colorhot!8] (0,0) -- (phaseorangeupper)
                -- ({\phasefigwidth+2},{\phasefigheight+2}) -- (0,{\phasefigheight+2}) -- cycle;
        \end{scope}

        \draw[figure axis] (0,0) -- ({\phasefigwidth+1},0) node[right] {$N$};
        \draw[figure axis] (0,0) -- (0,{\phasefigheight+1}) node[above] {$n$};
        
        \draw[line width=.2em,color=colorhot] (0,\phasefigheight) -- ({.5*\phasefigwidth},\phasefigheight);
        \node[align=left, anchor=north west] at ({\phasefigwidth*0.1},{\phasefigheight}) {
                 \textcolor{colorhot}{Ultra-high temperature}
                 \\
                 \textcolor{colorhot}{with $\beta=o(1)$}
        };

        \node[align=right, anchor=north east, inner sep=2pt] at  ({\phasefigwidth*0.9},{\phasefigheight*0.7-.5}) {
               \textcolor{colormoderate}{High temperature}
               \\
               \textcolor{colormoderate}{with $\beta < \sqrt{2}$}
        };

        \node[align=left, anchor=south west, draw=black, inner sep=2pt] at  ({\phasefigwidth*0.5},{\phasefigheight+.4}) {
                 \textcolor{colorhot}{$O_\P\big( \log n + \frac{\beta^2 \log m}{n} \big)$}
        };

        \draw[line width=.2em,color=colorcold] (\phasefigwidth,{.5*\phasefigheight}) -- (\phasefigwidth,0);
        \node[align=right, anchor=east] at ({\phasefigwidth},{.2*\phasefigheight}) {
                 \textcolor{colorcold}{Low}
                \\
                 \textcolor{colorcold}{temperature}
                 \\
                 \textcolor{colorcold}{with $\beta \geq \sqrt{2}$}
        };

        \node[align=left, anchor=west, draw=black, inner sep=2pt] at ({\phasefigwidth+.5},{.2*\phasefigheight}) {
                \scriptsize \textcolor{colorcold}{$
                    - \frac{(\beta - \sqrt{2})^2(\log m )}{4} $}
                \\
                \scriptsize \quad \textcolor{colorcold}{$ \times \Big( 1 + o_\P(1) + 
                    O_\P\Big(
                    \mfrac{\log n}{\beta^2 \log m} \Big)
                    \Big)
                    $}
        };

        \draw[line width=.2em, color=colorhot] ({.5*\phasefigwidth},\phasefigheight)
            .. controls ({.6*\phasefigwidth},\phasefigheight) and ({.68*\phasefigwidth},{.92*\phasefigheight}) .. (phaseorangestart)
            .. controls ({.75*\phasefigwidth},{.85*\phasefigheight}) and ({.77*\phasefigwidth},{.83*\phasefigheight}) .. (phaseorangeupper);

        \draw[->, dotted, figure auxiliary, color=colormoderate] (0,0) -- ({.65*\phasefigwidth},{.94*\phasefigheight});

        \node[anchor=south east, align=left] at ({.47*\phasefigwidth-.6},{.6*\phasefigheight}) {
            \textcolor{colormoderate}{$\frac{N}{n} \rightarrow \gamma \in (0, \infty)$}
        };

        \draw[line width=.2em, color=colormoderate]  (phaseorangeupper)
            .. controls ({.79*\phasefigwidth},{.81*\phasefigheight}) and ({.795*\phasefigwidth},{.805*\phasefigheight}) .. ({.8*\phasefigwidth},{.8*\phasefigheight})
            .. controls ({.84338186*\phasefigwidth},{.75661814*\phasefigheight}) and ({.92858562*\phasefigwidth},{.68186986*\phasefigheight}) .. (phasebottomdivider);

        \draw[line width=.2em, color=colorcold]  (phasebottomdivider)
            .. controls ({.98926034*\phasefigwidth},{.56647257*\phasefigheight}) and (\phasefigwidth,{.53323628*\phasefigheight}) .. (\phasefigwidth,{.5*\phasefigheight});

        \draw[dashed, figure auxiliary, color=colornew] (0,0) -- ({1.3*\phasefigwidth},{0.8*\phasefigheight});

        \node[inner sep=2pt] at ({1.32*\phasefigwidth},{0.85*\phasefigheight}){\textcolor{colornew}{$\beta=\sqrt{2}$}};

        \node[align=left, anchor=south west, draw=black, inner sep=2pt] at ({.82*\phasefigwidth+.5},{.75*\phasefigheight+.5}) {
                 \textcolor{colormoderate}{$O_\P( \log n + \beta(\log m)^{3/4} ) $ 
                }
        };

    \end{tikzpicture}
    \caption{
        Our results about the value of $\log s_1(X)$ under the triple scaling where $m$ grows with $n$ and $N$, which is determined not by $\lim \frac{N}{n} = \gamma$ but by the inverse temperature $\beta = \frac{\sqrt{2(N-1)}}{\sqrt{n \log m}} $.
    }
\end{figure}

We now interpret the results in both the high and low temperature regimes:

\vspace{.5em}

\noindent
\textbf{High temperature regime with $\beta \leq \sqrt{2}$.} In this case, the limiting approximation is $Z =  0$, and \Cref{thm:pointwise,thm:top:Lyapunov} imply the following about $\log \| X \theta \|$ and $\log s_1(X)$:
\begin{enumerate}
    \item[(a)] If $\beta = o(1)$, by choosing $\epsilon = m^{ - c_1}+ n^{- \frac{1}{2} + c_2}$ for some small $c_1, c_2 > 0$, we obtain that with high probability,
    \begin{align*}
        \log \| X \theta \| 
        \;=&\;
        O\Big( m^{-c_1} + n^{ -\frac{1}{2} + c_2 }  +  \mfrac{\beta^2 \log m}{n}  \Big)
        \;.
    \end{align*}
    Therefore with high probability, 
    \begin{align*}
        \| X \theta \| \;=\; 1 + o(1)
        \;\qquad\;
        \text{ provided that }
         \beta^2 \log m = o(n)\;.
    \end{align*}
    Since $\beta^2 = \Theta(\frac{N}{n \log m} ) = o(1)$, the condition above can be satisfied by either 
    \begin{align*}
        \log m \;=&\; O(n)
        &\text{ or }&&
        N \;=&\; o (n^2)\;.
    \end{align*}
    Meanwhile, by the same argument, we have that with high probability,
    \begin{align*}
        \log s_1(X)
        \;=&\;
        O\Big( 
            m^{-c_1} + n^{ -\frac{1}{2} + c_2 }  +  \mfrac{\beta^2 \log m}{n}  
            +
            \log n
        \Big)
        \;=\;
        O\Big(\log n +  \mfrac{\beta^2 \log m}{n}   \Big)
        \;.
    \end{align*}
    Therefore provided that $\beta^2 \log m = o(n \log n)$,  we have that with high probability, 
    \begin{align*}
        s_1(X) \;\in\; \big[ n^{- c' } \,,\, n^{c'}  \big]
    \end{align*}
    for some universal constant $c' > 0$.
    \item[(b)] If $\beta = \Omega(1)$ with $\beta < \sqrt{2}$, by the same choice of $\epsilon$ and noting that $N/(n \log m) = \Theta(\beta^2)$, we obtain that with high probability, 
    \begin{align*}
        \log \| X \theta \| 
        \;=&\;
        O\Big( 
            \beta \big( m^{-c_1}+ n^{ -\frac{1}{2} + c_2 } \big) 
            + 
            \beta (\log m)^{3/4} 
        \Big)
        \;=\; O\big( \beta (\log m)^{3/4}  \big)
        \;.
    \end{align*}
    Therefore with high probability, 
    \begin{align*}
        \| X \theta \| \;\in\; \big[ e^{ - c'' \beta (\log m)^{3/4}} \,,\,    e^{ c'' \beta (\log m)^{3/4}} \big]
    \end{align*}
    for some universal constant $c'' > 0$. Similarly, with high probability, 
    \begin{align*}
        \log s_1(X) \;=\; O\big( \beta (\log m)^{3/4} + \log n \big)\;,
    \end{align*}
    in which case 
     \begin{align*}
        s_1(X) \;\in\; \big[ n^{-c'} e^{ - c'' \beta (\log m)^{3/4}} \,,\,   n^{c'} e^{ c'' \beta (\log m)^{3/4}} \big]
        \;.
    \end{align*}
\end{enumerate}
In summary, in the high temperature regime, our results provide an interval that the top singular value $s_1(X)$ lives in with high probability, while providing a precise characterization of $\| X \theta \|$ when we additionally have $\beta = o(1)$.

\vspace{.5em}

\noindent
\textbf{Low temperature regime with $\beta > \sqrt{2}$ and $| \beta - \sqrt{2}| = \Omega(1)$.} The limiting approximation satisfies
\begin{align*}
    Z \;=\; \Theta( \beta^2 \log m )\;.
\end{align*}    
In this case, our result does precisely characterize $\| X \theta \|$: By noting that $\beta^2 \log m = \Theta(\frac{N}{n})$ and redefining the universal constant $C  >0$, the bound in \Cref{thm:pointwise}(ii) reads
\begin{align*}
     \mfrac{1}{\beta^2 \log m}
        \big|  
            \log \| X \theta \|
            -
            Z
        \big| 
        \;\leq\; 
        C \Big( 
            \mfrac{\epsilon}{(1-\epsilon) \beta \log m}
            +
            \mfrac{1}{\beta (\log m)^{1/4}}
        \Big)
        \;.
\end{align*}
Choosing $\epsilon = \frac{1}{2}$, we obtain that with high probability,
\begin{align*}
    \mfrac{| \log \| X \theta \| - Z |}{\beta^2 \log m}
    \;=&\;
    O\Big(  \mfrac{1}{\beta (\log m)^{1/4}}  \Big)
    \;,
\end{align*}
in which case
\begin{align*}
    \log \| X \theta \| \;=&\; Z (1 + o(1))
    &\text{ and }&&
    \| X \theta \| \;=&\; e^{Z (1 + o(1)) }\;.
\end{align*}
By a similar argument, we have that with high probability,
\begin{align*}
    \mfrac{| \log s_1(X) - Z |}{\beta^2 \log m}
    \;=\;
    O\Big(
        \mfrac{1}{\beta (\log m)^{1/4}} 
        +
        \mfrac{\log n}{\beta^2 \log m}
    \Big)
    \;.
\end{align*}
This implies that with high probability,
\begin{align*}
    \log s_1(X) \;=&\; Z (1 + o(1))
    &\text{ and }&&
    s_1(X) \;=&\; e^{Z (1 + o(1)) }
\end{align*}
provided that $\log n = o(\beta^2 \log m)$, which can be satisfied by either 
\begin{align*}
    n \;=&\; o(m)
    &\text{ or }&&
    n \log n \;=&\; o(N)\;.
\end{align*}

\paragraph{Related works.} A large body of work has studied the model \eqref{eq:model} in the case $m=1$ and derived approximations for both $s_1(X)$ and $\| X \theta \|$ with $\theta \in \cS^{n-1}$:
\begin{itemize}
    \item For $N$ fixed and $n \rightarrow \infty$, the top singular value of $X$ is studied in the random matrix theory and free probability literature: The single matrix case ($N=1$) is addressed by works dating back to \cite{geman1980limit} and \cite{yin1988limit}, whereas the fixed-$N$ product of real Gaussian matrices is considered in \cite{akemann2013products} and \cite{saada2024simple}. 
    \item For $n$ fixed and $N \rightarrow \infty$, many classical results are available in the ergodic theory literature \citep{furstenberg1960products,oseledets1968multiplicative}, which notably shows that  $\frac{1}{N} \log s_1(X)$ and $\frac{1}{N} \log \| X \theta \|$ have the same asymptotic limit. The sequential limit of taking $N \rightarrow \infty$ first before taking $n \rightarrow \infty$ is also studied in a long line of works \citep{cohen1984stability,newman1986distribution,isopi1992triangle,kargin2014largest}. 
    \item It has been noted that the $N \rightarrow \infty$ and $n \rightarrow \infty$ limits do not commute at the local scale, at least in the complex Gaussian case \citep{akemann2014universal,liu2023lyapunov}. For the double-scaling regime where $n, N \rightarrow \infty$ with $N/n \rightarrow \gamma \in(0, \infty)$, to the best of our knowledge, precise characterization of the top Lyapunov exponent of products of large \emph{real} Gaussian matrices remains unknown. Nevertheless, related works have characterized the complex Gaussian case \citep{liu2023lyapunov,akemann2019integrable,akemann2020universality}, the case with truncated unitary matrices and some more general complex matrices \citep{Ahn2022,ahn2022extremal} and the fixed direction case, i.e.~$\frac{1}{N} \log \| X \theta \|$ for a fixed $\theta \in \cS^{n-1}$ and $m=1$ (see e.g.~\citep{hanin2020products}, where the argument follows from a direct distributional characterization through log Gamma random variables). In all these cases, the limiting expressions are characterized completely by the limiting ratio $\gamma$. For the real Gaussian case, global laws that are independent of $\gamma$ have also been established in \citep{hanin2021non,hanin2025global}. \Cref{fig:double:scaling} provides an overview of the results in the different regimes.
\end{itemize}

\vspace{.5em}

Much less is known about the model \eqref{eq:model} in the case of a general $m$. A long line of work has studied general non-commutative polynomials of large random matrices \citep{haagerup2005new,schultz2005non,van2025strong}  and more general matrix-valued functions of random matrices \citep{gotze2015asymptotic} --- of which our model \eqref{eq:model} is a special case --- but only in the limit $n \rightarrow \infty$. \cite{bordenave2011spectrum,kosters2015limiting} consider models with sums and products of random matrices, but also in the $n \rightarrow \infty$ limit and with a focus on the global law. Notably, \cite{kosters2015limiting} observe that the $m$-fold average of matrix products has the same global law as a single matrix product. Our results consider a different regime where $N, n, m \rightarrow \infty$ simultaneously, and observe that when $\log m$ is sufficiently large, the top Lyapunov exponent of $X$ behaves very differently from that of a single matrix product.

\vspace{.5em}

We also include a detailed comparison to results from the random energy model literature in \Cref{sec:literature:REM}.

\paragraph{Proof techniques. } Our proof consists of three ingredients. The first ingredient is the reduction of the quantities  $\log s_1(X)$ and $\log \| X \theta\|$ to the log-partition function of a random energy model (REM) with non-Gaussian energies. This is achieved by interlacing a concentration inequality over $m$ random matrices with results from \cite{hanin2021non} that characterize behaviors in the double-scaling regime (i.e. $m=1$). The second ingredient is an approximation of the non-Gaussian REM by a Gaussian REM. This requires sharp location-dependent Gaussian approximations, which are achieved by the classical Cram\'er-type moderate deviation theorem \citep{cramer1938nouveau}. The final ingredient is a set of concentration inequalities for the log-partition function of the REM with quantitative estimates in different regimes. Most of the technical work goes to the moderate-to-low temperature regime with $\beta = \Omega(1)$, where we employ Laplace's method with an explicit computation of the approximation errors. For the ultra-high temperature regime with $\beta =o(1)$, our result is obtained by combining moment generating function estimates of the energies with the Markov inequality.

\paragraph{Organization.} The rest of the article is organized as follows. \Cref{sec:reduction:REM} reduces the problem of characterizing $\log s_1(X)$ and $\log \| X \theta\|$ for a fixed $\theta \in \cS^{n-1}$ to the study of a non-Gaussian random energy model (REM), which will be shown to be approximable by a Gaussian REM. \Cref{sec:REM} presents two concentration inequalities on the log-partition function of the non-Gaussian REM (\Cref{prop:ultrahot} and \Cref{prop:moderate:cold}) and shows that our results agree with classical results on the Gaussian REM. \Cref{sec:proof:ultrahigh} proves \Cref{prop:ultrahot}, the REM result in the ultra-high temperature regime. \Cref{sec:proof:moderate:cold} proves \Cref{prop:moderate:cold}, the REM result in the moderate-to-cold temperature regime. \Cref{sec:proof:main} combines these results to prove our main results, i.e.~\Cref{thm:pointwise,thm:top:Lyapunov}. 

\section{Reduction to the random energy model} \label{sec:reduction:REM}

Our key observation is that the analysis of the top Lyapunov exponent $\log s_1(X)$ can be reduced to the log-partition function of a suitable random energy model. The first step is to note that, by an argument analogous to Proposition 8.1 of \cite{hanin2021non}, which studies one matrix product $X_1$, we can approximate the normalized top Lyapunov exponent of $X$ by the effect of $X$ on a fixed vector $\theta \in \cS^{n-1}$:
\begin{align*}
     \msup_{\theta' \in \cS^{n-1}} \mfrac{1}{N} \log \| X \theta' \|
    \;\approx\; 
    \mfrac{1}{N} \log \| X \theta \|
     \;.
     \tagaligneq \label{eq:neglect:pointwise}
\end{align*}
This is made formal by \Cref{lem:sup:to:pointwise} below. Next, observe that we can express 
\begin{align*}
    \| X \theta \|^2 
    \;=\;
    \theta^\top \Big(\mfrac{1}{m} \msum_{i, j \leq m} X_i^\top X_j \Big) \theta 
    \;=\;
    \theta^\top \Big(\mfrac{1}{m} \msum_{i, j \leq m} X_{i, N-1:1}^\top X_{iN}^\top X_{jN} X_{j, N-1:1} \Big) \theta 
    \;,
\end{align*} 
where we have denoted
\begin{align*}
    X_{i,N-1:1} \;\coloneqq\; X_{i(N-1)} \cdots X_{i1}\;.
\end{align*}
Conditioning on $(X_{i,N-1:1})_{i \leq m}$, the quantity $\| X \theta\|^2$ can be viewed as a random quadratic form in the i.i.d.~Gaussian matrices $(X_{iN})_{i \leq m}$. In \Cref{lem:Hanson:Wright}, we will use a concentration inequality over the randomness of $(X_{iN})_{i \leq m}$ to show that we can approximate 
\begin{align*}
    \| X \theta \|^2  \;\approx\; \theta^\top \Big(\mfrac{1}{m} \msum_{i \leq m} X_{i,N-1:1}^\top X_{i,N-1:1} \Big) \theta 
    \;=\;
    \mfrac{1}{m} \msum_{i \leq m}  \| X_{i,N-1:1} \theta  \|^2\;.
    \tagaligneq \label{eq:Xtheta:REM}
\end{align*}
\eqref{eq:Xtheta:REM} now involves $m$ i.i.d.~univariate quantities, each involving one $N-1$ matrix product. In particular, the distribution of each summand has been completely characterized by \cite{hanin2021non}. To make this formal, consider a collection of i.i.d.~random variables $(Y_{ij})_{1 \leq i \leq m, 1 \leq j \leq N-1}$ each distributed as 
\begin{align*}
    Y_{ij} \;\sim\; \mfrac{1}{2} \log\Big( \mfrac{1}{n} \chi^2_n \Big)\;,
\end{align*}
where $\chi^2_n$ is a chi-squared random variable with $n$ degrees of freedom. The following holds:

\begin{lemma}[Special case of Lemma 9.5 of \cite{hanin2021non}] \label{lem:vector:norm:to:chi:sq} For any fixed $\theta \in \cS^{n-1}$, $\| X_{1,N-1:1} \theta \|$ is identically distributed as 
$\exp\big( \sum_{j=1}^{N-1} Y_{1j} \big)$.
\end{lemma}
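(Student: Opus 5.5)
The plan is to use rotational invariance of the Gaussian ensemble together with a sequential conditioning argument. Write $\theta_0 \coloneqq \theta$ and, for $1 \le j \le N-1$, let $v_j \coloneqq X_{1j} \cdots X_{11} \theta$, so that $\|X_{1,N-1:1}\theta\| = \|v_{N-1}\|$. I will show that, on the event that all $v_j \neq 0$ (which has probability one), the ratios
\begin{align*}
    R_j \;\coloneqq\; \mfrac{\|v_j\|}{\|v_{j-1}\|}\;, \qquad v_0 \coloneqq \theta\;,
\end{align*}
are i.i.d. and satisfy $R_j^2 \sim \frac{1}{n}\chi^2_n$. Once this is done, the product telescopes:
\begin{align*}
    \|X_{1,N-1:1}\theta\| \;=\; \prod_{j=1}^{N-1} R_j \;=\; \exp\Big(\msum_{j=1}^{N-1} \tfrac12 \log R_j^2\Big)\;,
\end{align*}
and this has the law of $\exp\big(\sum_j Y_{1j}\big)$.

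The key single-step fact is the following. If $G$ is an $n\times n$ matrix with i.i.d. $\cN(0,n^{-1})$ entries and $u \in \cS^{n-1}$ is deterministic, then $Gu \sim \cN(0, n^{-1} I_n)$, because each coordinate $\sum_k G_{ik}u_k$ is Gaussian with variance $n^{-1}\|u\|^2 = n^{-1}$, and distinct coordinates are independent. Hence $\|Gu\|^2 \sim \frac1n \chi^2_n$, and this law does not depend on $u$.

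I would then apply this fact with conditioning. Let $\mathcal F_{j-1} = \sigma(X_{11},\dots,X_{1(j-1)})$. Then $u_{j-1} \coloneqq v_{j-1}/\|v_{j-1}\|$ is $\mathcal F_{j-1}$-measurable, while $X_{1j}$ is independent of $\mathcal F_{j-1}$. Since $R_j = \|X_{1j} u_{j-1}\|$, the single-step fact shows that, conditionally on $\mathcal F_{j-1}$, $R_j^2 \sim \frac1n\chi^2_n$. Because this conditional law is the same for every value of $\mathcal F_{j-1}$, $R_j$ is independent of $\mathcal F_{j-1}$, and in particular of $(R_1,\dots,R_{j-1})$, which are $\mathcal F_{j-1}$-measurable. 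Induction on $j$ then gives joint independence and the common marginal law. The almost-sure nonvanishing of each $v_j$ also follows from this step, since $\chi^2_n>0$ almost surely, so no division by zero occurs.

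I do not expect a genuine obstacle here. The only point needing care is the passage from ``the conditional law is constant'' to ``the $R_j$ are mutually independent''. I would handle it by computing $\mean\big[\prod_j f_j(R_j)\big]$ through iterated conditioning from $j=N-1$ down to $j=1$, which factors into $\prod_j \mean f_j(R_j)$.
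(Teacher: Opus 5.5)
Your proof is correct. The paper itself gives no argument for this lemma: it imports it as the one-dimensional case of Lemma 9.5 of \cite{hanin2021non}. That cited result treats fixed orthonormal $k$-frames rather than single vectors. There, the same layer-by-layer conditioning, combined with a Gram--Schmidt step at each layer, produces independent $\frac{1}{n}\chi^2_{n-j+1}$ factors for $j \le k$.

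Your argument is exactly the $k=1$ instance of that mechanism. A Gaussian matrix, independent of the earlier layers, is applied to a unit vector determined by those layers. This yields a norm ratio with law $\frac{1}{n}\chi^2_n$ that is independent of the past, and the telescoping product then gives the claim. What your route buys is a self-contained proof at essentially no cost. The citation buys the higher-dimensional statement, which this paper never needs.

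One housekeeping point. Define $u_{j-1}$ arbitrarily, say $u_{j-1} = e_1$, on the null event $\{v_{j-1} = 0\}$, so that it is a genuine $\mathcal{F}_{j-1}$-measurable unit vector. Alternatively, note that each $X_{1j}$ is almost surely invertible, so every $v_j$ is almost surely nonzero from the outset. With that, the substitution step, which replaces $u_{j-1}$ by a deterministic vector inside the conditional expectation given $\mathcal{F}_{j-1}$, is fully justified.
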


Since $X_{i,N-1:1}$ are i.i.d.~across $1 \leq i \leq m$, \Cref{lem:vector:norm:to:chi:sq} implies that for a fixed $\theta \in \cS^{n-1}$, the collection $(  \| X_{1,N-1:1} \theta \|, \ldots,   \| X_{m,N-1:1} \theta \| )$ is identically distributed as 
\begin{align*}
    \Big( \exp\big(\msum_{j=1}^{N-1} Y_{1j}\big) \,,\, \ldots \,,\, \exp\big(\msum_{j=1}^{N-1} Y_{mj}\big) \Big)\;.
\end{align*}
This allows us to express 
\begin{align*}
    \mfrac{1}{2N} \log\Big( \mfrac{1}{m} \msum_{i \leq m}  \| X_{i,N-1:1} \theta  \|^2 \Big)
    \;\overset{d}{=}\;
    \mfrac{1}{2N} \log\Big( 
        \mfrac{1}{m} \msum_{i \leq m} 
    e^{2 \sum_{j=1}^{N-1} Y_{ij} }
    \Big)
    \;.
    \tagaligneq \label{eq:REM:first:derivation}
\end{align*}
Up to shifting and rescaling, this can be interpreted as the log partition function of a random energy model with non-Gaussian energies $\{ \sum_{j=1}^{N-1} Y_{ij}  \}_{i \leq m}$; see \Cref{sec:REM}.

\vspace{.5em}

The rest of this section makes the above argument formal. For the removal of the supremum, we note that for a single matrix product, Proposition 8.1 of \cite{hanin2021non} establishes the approximation 
\begin{align*}
    \sup_{\theta' \in \cS^{n-1}} \mfrac{1}{N} \log \| X_1 \theta' \| \;\approx\; \mfrac{1}{N} \log \| X_1 \theta\|
\end{align*}
for any fixed $\theta \in \cS^{n-1}$. It turns out that their proof technique directly extends to our setting of a general $m$.

\begin{lemma} \label{lem:sup:to:pointwise} There exists a universal constant $C > 0$ such that, for any $\epsilon \in (0,1)$ and $\theta \in \cS^{n-1}$, we have
\begin{align*}
    \P\Big( 
        \Big| 
            \mfrac{1}{N}
            \log \| X \theta \|
            - 
            \mfrac{1}{N}
            \sup_{\theta' \in \cS^{n-1}} 
            \log \| X \theta' \|
        \Big|  
        \,\geq\,
        \mfrac{1}{2 N} \log \Big( \mfrac{n}{\epsilon^2} \Big)
    \Big)
    \leq 
    (C \epsilon)^{1/2}
    \;.
\end{align*}
\end{lemma}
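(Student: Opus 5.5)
Since $\|X\theta\| \le s_1(X) = \sup_{\theta'}\|X\theta'\|$ always holds, only one direction needs proof. We must show that, with probability at least $1-(C\epsilon)^{1/2}$,
\[
s_1(X) \le (\sqrt{n}/\epsilon)\,\|X\theta\|.
\]
The plan follows Proposition 8.1 of \cite{hanin2021non}, which rests on two ingredients: rotational invariance and a small-ball estimate.

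\textbf{Step 1 (rotational invariance).} For any orthogonal $O$, the matrices $X_{i1}O$ have the same joint law as $X_{i1}$, independently over $i$. Since $X$ depends on the first-layer matrices only through $X_{i1}$, the pair $(X, X\theta)$ has the same law as $(XO, XO\theta)$. Take $O$ Haar-distributed and independent of everything. Then $\|X\theta\|$ has the same law as $\|X u\|$, where $u$ is uniform on $\cS^{n-1}$ and independent of $X$. Moreover $s_1(XO)=s_1(X)$. So it suffices to bound
\[
\P\big(\|Xu\| < \epsilon\, s_1(X)/\sqrt{n}\big).
\]

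\textbf{Step 2 (small ball).} Let $v_1$ be a top right singular vector of $X$. Then $\|Xu\| \ge s_1(X)\,|\langle u, v_1\rangle|$. Condition on $X$; the vector $v_1$ is then fixed and $u$ is independent of it. For uniform $u$, the coordinate $\langle u, v_1\rangle$ has the law of $g_1/\|g\|$ with $g$ standard Gaussian in $\R^n$. Hence
\[
\P\big(|\langle u,v_1\rangle| < \epsilon/\sqrt n\big) \le \P\big(|g_1| < 2\epsilon\big) + \P\big(\|g\| > 2\sqrt n\big) \le C\epsilon + e^{-cn}.
\]
This already gives a bound of order $\epsilon$, which is stronger than the claimed $(C\epsilon)^{1/2}$.

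\textbf{Step 3 (matching the stated bound).} The $e^{-cn}$ term is not automatically dominated by $\epsilon$. To avoid it, use the exact density of $\langle u,v_1\rangle$, which is proportional to $(1-t^2)^{(n-3)/2}$ with normalizing constant $O(\sqrt n)$. This gives $\P(|\langle u,v_1\rangle| < \epsilon/\sqrt n) \le C\epsilon$ with no additive term. Alternatively, apply Markov's inequality to $|\langle u,v_1\rangle|^{-1/2}\sqrt{\epsilon/\sqrt n}$, which produces the square-root form $(C\epsilon)^{1/2}$ as in \cite{hanin2021non}. Since $C\epsilon \le (C\epsilon)^{1/2}$ whenever $C\epsilon \le 1$, and the bound is trivial otherwise, we obtain the claim:
\[
\tfrac{1}{N}\log s_1(X) - \tfrac{1}{N}\log\|X\theta\| \le \tfrac{1}{2N}\log(n/\epsilon^2).
\]

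\textbf{Main obstacle.} The delicate point is Step 1. We must check that the joint distributional identity holds for the whole sum over $i$, which requires using a single common $O$ for all $m$ products. This works because each $X_{i1}O$ is again i.i.d. Gaussian and independent across $i$. We also need the event "$\|X\theta\|$ is small relative to $s_1$" to be a measurable function of $(X, X\theta)$, so that its probability is preserved under the substitution $\theta \mapsto O\theta$.
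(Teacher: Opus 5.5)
Your proposal is correct and takes essentially the paper's route. The paper simply invokes the proof of Proposition 8.1 of \cite{hanin2021non}, noting that its only input is invariance of $X$ under right multiplication by a Haar orthogonal matrix (which, as you check, holds for the whole sum because a common $O$ acts on every $X_{i1}$), and you have written that argument out via $\|Xu\| \ge s_1(X)\,|\langle u, v_1\rangle|$ and a small-ball bound for a coordinate of a uniform vector on $\cS^{n-1}$; your exact-density computation even gives the stronger bound $C\epsilon$, which implies the stated $(C\epsilon)^{1/2}$.
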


\begin{proof}[Proof of \Cref{lem:sup:to:pointwise}] The result follows verbatim from the proof of Proposition 8.1 of \cite{hanin2021non}: The only property of $X_1 = X_{1N} \cdots X_{11}$ used in their proof is that $X_1$ is distributionally invariant under right multiplication by a Haar orthogonal matrix in $\R^{n \times n}$, which holds also for the sum of Gaussian matrix products $X = \frac{1}{\sqrt{m}} \sum_{i \leq m} X_i$.
\end{proof}

\vspace{.5em}

We now establish a concentration inequality over the randomness of $X_{iN}$ for the approximation \eqref{eq:Xtheta:REM}.

\begin{lemma} \label{lem:Hanson:Wright}  There exists some universal constant $c > 0$ such that for every $\epsilon > 0$ and $\theta \in \R^n$, 
\begin{align*}
    &\;
    \P\Big( 
        \,
        \Big|  
            \| X \theta \|^2
            -
            \Big(
                \mfrac{1}{m} \msum_{i \leq m} 
                \| X_{i,N-1:1} \theta \|^2
            \Big)
        \Big| 
        > \epsilon \, 
        \Big(
                \mfrac{1}{m} \msum_{i \leq m} 
                \| X_{i,N-1:1} \theta \|^2
            \Big)
    \Big)
    \;\leq\;
    2
    e^{
        - c n  \min\{ \epsilon^2, \epsilon\}
    }
    \;.
\end{align*}
\end{lemma}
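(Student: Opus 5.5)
We condition on $(X_{i,N-1:1})_{i \leq m}$ and set $v_i \coloneqq X_{i,N-1:1}\theta \in \R^n$. Then
\begin{align*}
    X\theta \;=\; \mfrac{1}{\sqrt m}\msum_{i\le m} X_{iN} v_i \;.
\end{align*}
Conditionally, the matrices $X_{iN}$ are independent with i.i.d.\ $\cN(0,n^{-1})$ entries, so each row $k$ of $X\theta$, namely $(X\theta)_k = \frac{1}{\sqrt m}\sum_i \sum_l (X_{iN})_{kl}(v_i)_l$, is a centered Gaussian. Its variance is $\frac{1}{nm}\sum_i \|v_i\|^2 = \frac{1}{n} S$, where $S \coloneqq \frac1m\sum_i \|v_i\|^2$. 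Distinct rows $k$ involve disjoint sets of entries, so the $n$ coordinates are independent. Hence, conditionally,
\begin{align*}
    \| X\theta\|^2 \;\overset{d}{=}\; S \cdot \mfrac{1}{n}\chi^2_n \;.
\end{align*}
This is the key structural step: the Gaussian rotational invariance collapses the whole $m$-fold sum into a single chi-squared variable. In particular, no Hanson--Wright argument over the cross terms $i\ne j$ is really needed, because those cross terms are absorbed into this exact distributional identity.

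It then remains to bound $\P(|\chi^2_n/n - 1| > \epsilon)$. By the standard sub-exponential (Bernstein, or Laurent--Massart) bound for a sum of $n$ i.i.d.\ centered $\chi^2_1 - 1$ variables, this probability is at most $2e^{-cn\min\{\epsilon^2,\epsilon\}}$ for a universal constant $c$. Since the bound is uniform in the conditioning, taking expectation over $(X_{i,N-1:1})_i$ yields the claim.

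There are two minor points to handle. First, if $S=0$ (a probability-zero event, or $\theta=0$), both sides vanish, and the event ``$>\epsilon\cdot 0$'' fails trivially. Second, for general $\theta\in\R^n$ nothing changes, since the argument never uses $\|\theta\|=1$.

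The main obstacle I anticipate is only in verifying the conditional independence and the common variance $S/n$ of the coordinates carefully. If one instead attempted a direct Hanson--Wright bound on the quadratic form in the stacked Gaussian vector, one would have to control the operator and Frobenius norms of the block matrix $\frac1m (v_i^\top v_j)_{ij}\otimes I_n$, which is more cumbersome. The exact chi-squared reduction sidesteps this.
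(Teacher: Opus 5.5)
Your argument is correct and is essentially the paper's. The paper also conditions on $\varphi_i = X_{i,N-1:1}\theta$ and notes that $X\theta \mid (\varphi_i)_{i\le m} \sim \cN(0, \tfrac{1}{n}\kappa_m^2 I_n)$ with $\kappa_m^2 = \tfrac{1}{m}\sum_i \|\varphi_i\|^2$. It then applies Hanson--Wright with the identity matrix, which is exactly the sub-exponential $\chi^2_n$ bound you invoke, before rescaling $\epsilon$ by $\kappa_m^2$ and integrating out the conditioning. So the contrast you draw with a Hanson--Wright argument over the cross terms $i\neq j$ is not a real difference from the paper's route.
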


\begin{proof}[Proof of \Cref{lem:Hanson:Wright}] First denote $\varphi_i \coloneqq X_{i, N-1:1} \theta$. Conditioning on $(\varphi_i)_{i \leq m}$, the vector of concern is an empirical average of Gaussian vectors 
\begin{align*}
    X \theta \;=\; \mfrac{1}{\sqrt{m}} \msum_{i \leq m} X_i \theta \;=\; \mfrac{1}{\sqrt{m}} \msum_{i \leq m} X_{iN} \varphi_i \;,
\end{align*}
where 
\begin{align*}
    X_{iN} \varphi_i  \,|\, \varphi_i \;\sim\; \cN\Big( 0,  \mfrac{1}{n} \|\varphi_i\|^2 \ind_n \Big)
    \;\equiv\;
     \cN\Big( 0,  \mfrac{1}{n} \| X_{i,N-1:1} \theta\|^2 \ind_n \Big)
    \;.
\end{align*}
Therefore 
\begin{align*}
    X \theta \,|\, (\varphi_i)_{i \leq m}
    \;\sim\; 
    \cN\Big( 0, \mfrac{1}{n} \kappa^2_m \ind_n \Big)
    \;,
    \quad 
    \text{ where we write }
    \kappa^2_m \;\coloneqq\; \mfrac{1}{m} \msum_{i \leq m}  \| X_{i,N-1:1} \theta\|^2\;.
\end{align*}
By the Hanson-Wright inequality (see e.g.~Theorem 6.2.1.~of \cite{vershynin2018high}), there exists some universal constant $c > 0$ such that for every $\epsilon > 0$, almost surely
\begin{align*}
    \P\Big( 
        \big| 
            \| X \theta \|^2
            -
            \kappa^2_m
        \big| 
        > \epsilon
        \,\Big|\, (\varphi_i)_{i \leq m}
    \Big)
    &\;\leq\;
    2
    \exp\Big( 
        - c
        \min\Big\{ 
            \mfrac{n^2 \epsilon^2}{ \kappa_m^4 \| \ind_n \|_{\rm F}^2 }
            \,,\,    
            \mfrac{n \epsilon}{\kappa_m^2 \| \ind_n  \|_{op} }
        \Big\}
    \Big)
    \\
    &\;=\;
    2
    \exp\Big( 
        - c n \min\Big\{  \mfrac{ \epsilon^2}{ \kappa^4_m } \,,\, \mfrac{ \epsilon}{\kappa_m^2} \Big\}
    \Big)
    \;.
\end{align*}
To obtain the required bound, we rescale $\epsilon$ by $\kappa_m^2$ and take expectation on both sides of the inequality above. 
\end{proof}

\vspace{.5em}

\Cref{lem:Hanson:Wright} implies a control on $\log \| X \theta \|$:

\begin{corollary} \label{cor:log:Hanson:Wright}  There exists some universal constant $c > 0$ such that for every $\epsilon \in (0,1)$ and $\theta \in \R^n \setminus \{0\}$, 
\begin{align*}
    \P\Big( 
        \Big|  
            \log \| X \theta \|
            -
            \mfrac{1}{2} \log  
            \Big(
                \mfrac{1}{m} \msum_{i \leq m} 
                \| X_{i,N-1:1} \theta \|^2
            \Big)
        \Big| 
        \;\geq\; 
        \mfrac{\epsilon}{2(1-\epsilon)}
    \Big)
    \;\leq\;
    2 e^{-c n \epsilon^2 }
    \;.
\end{align*}
    
\end{corollary}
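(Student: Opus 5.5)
The plan is to deduce \Cref{cor:log:Hanson:Wright} directly from \Cref{lem:Hanson:Wright} by taking logarithms. Write $\kappa_m^2 \coloneqq \frac{1}{m}\sum_{i\le m}\|X_{i,N-1:1}\theta\|^2$, as in the proof of \Cref{lem:Hanson:Wright}. Fix $\epsilon\in(0,1)$ and let $\mathcal{E}$ be the event
\begin{align*}
    \big| \|X\theta\|^2 - \kappa_m^2 \big| \;\le\; \epsilon\,\kappa_m^2 .
\end{align*}
Since $\epsilon<1$, we have $\min\{\epsilon^2,\epsilon\}=\epsilon^2$. \Cref{lem:Hanson:Wright} then gives $\P(\mathcal{E}^c)\le 2e^{-cn\epsilon^2}$.

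The first thing to check is that $\kappa_m^2>0$ almost surely, so that the logarithms are defined. Each $X_{i,N-1:1}$ is a product of Gaussian matrices and is therefore almost surely invertible. Since $\theta\neq 0$, each $\|X_{i,N-1:1}\theta\|>0$, and hence $\kappa_m^2>0$. On $\mathcal{E}$ we then have
\begin{align*}
    (1-\epsilon)\kappa_m^2 \;\le\; \|X\theta\|^2 \;\le\; (1+\epsilon)\kappa_m^2 ,
\end{align*}
and in particular $\|X\theta\|>0$. Taking logarithms and halving,
\begin{align*}
    \tfrac12\log(1-\epsilon) \;\le\; \log\|X\theta\| - \tfrac12\log\kappa_m^2 \;\le\; \tfrac12\log(1+\epsilon).
\end{align*}

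It remains to bound both sides by $\frac{\epsilon}{2(1-\epsilon)}$. For the upper side, $\log(1+\epsilon)\le\epsilon\le\frac{\epsilon}{1-\epsilon}$. For the lower side, $-\log(1-\epsilon)=\log\big(1+\frac{\epsilon}{1-\epsilon}\big)\le\frac{\epsilon}{1-\epsilon}$. Combining the two, on $\mathcal{E}$ we get
\begin{align*}
    \big|\log\|X\theta\|-\tfrac12\log\kappa_m^2\big| \;\le\; \frac{\epsilon}{2(1-\epsilon)} .
\end{align*}

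One subtlety is that the corollary has $\geq$ inside the probability, so the bound above is not quite enough. One can apply the argument with a slightly smaller $\epsilon'<\epsilon$ and let $\epsilon'\uparrow\epsilon$; alternatively, one can note that the elementary inequalities are strict for $\epsilon>0$. Either way, this adjusts at most the constant $c$.

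There is no real obstacle here. The only points needing care are the almost-sure positivity of $\kappa_m^2$, the inequality $-\log(1-\epsilon)\le\frac{\epsilon}{1-\epsilon}$, and the strict-versus-weak inequality just discussed.
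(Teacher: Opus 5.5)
Your proposal is correct and follows the paper's proof. Both apply \Cref{lem:Hanson:Wright} with $\min\{\epsilon^2,\epsilon\}=\epsilon^2$, then bound $\tfrac12|\log(1+\Delta)|$ on the event $|\Delta|\le\epsilon$ by elementary logarithm inequalities: the paper uses $|\log(1+x)|\le |x|/(1-|x|)$, while you use the two one-sided bounds. Your remark on $\geq$ versus $>$ addresses a point the paper leaves implicit, and your strict-inequality option settles it with no change to $c$, since $\log(1+y)<y$ for $y>0$ makes both bounds strict on that event.
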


\begin{proof}[Proof of \Cref{cor:log:Hanson:Wright}] Denote
\begin{align*}
    \Delta \;\coloneqq\; \mfrac{\| X \theta \|^2 - \frac{1}{m} \sum_{i \leq m} 
            \| X_{i,N-1:1} \theta \|^2 }{\frac{1}{m} \sum_{i \leq m} 
            \| X_{i,N-1:1} \theta \|^2}\;.
\end{align*}
For $\epsilon \in (0,1)$, \Cref{lem:Hanson:Wright} then reads $\P( |\Delta | > \epsilon ) \leq 2 e^{-c n \epsilon^2 }$.  Note also that $\frac{x}{1+x} \leq \log(1+x) \leq x$ for all $x > -1$ and therefore $|\log(1+x)| \leq \frac{|x|}{|1-|x||}$. Then with probability at least $1- 2 e^{-c n \epsilon^2 }$, we have
\begin{align*}
     \Big|  
        \log \| X \theta \|
        -
        \mfrac{1}{2} \log  
        \Big(
            \mfrac{1}{m} \msum_{i \leq m} 
            \| X_{i,N-1:1} \theta \|^2
        \Big)
    \Big| 
    \;=&\;
    \mfrac{1}{2}
    \,
    \big|   
    \log 
        (1 + \Delta)
    \big| 
    \;\leq\;
    \mfrac{\epsilon}{2(1-\epsilon)}\;.
\end{align*}
\end{proof}

Combining this with \Cref{lem:sup:to:pointwise} by the triangle inequality and applying \eqref{eq:REM:first:derivation}, we obtain the desired approximation that 
\begin{align*}
    \mfrac{1}{N} \sup_{\theta' \in \cS^{n-1}} \log \| X \theta' \|
    \;\approx\;
    \mfrac{1}{2N} \log  
    \Big(
        \mfrac{1}{m} \msum_{i \leq m} 
        \| X_{i,N-1:1} \theta \|^2
    \Big)
    \;\overset{d}{=}\;
    \mfrac{1}{2N} \log\Big( 
        \mfrac{1}{m} \msum_{i \leq m} 
    e^{2 \sum_{j=1}^{N-1} Y_{ij} }
    \Big)
    \;.
    \tagaligneq \label{eq:empirical:REM:approx}
\end{align*}

\section{Log-partition function of a non-Gaussian REM}
\label{sec:REM}

We now focus on the quantity
\begin{align*}
    \cE 
    \;\coloneqq\;
    \log\Big( 
        \mfrac{1}{m} \msum_{i \leq m} 
    e^{2 \sum_{j=1}^{N-1} Y_{ij} }
    \Big)
    \;,
\end{align*}
where the i.i.d.~random variables $(Y_{ij})_{1 \leq i \leq m, 1 \leq j \leq N-1}$ are distributed as $\frac{1}{2} \log\big( \frac{1}{n} \chi^2_n \big)$ and $\chi^2_n$ is a chi-squared random variable with $n$ degrees of freedom. $\cE$ is related to the log-partition function of a non-Gaussian random energy model. Specifically, by standard estimates of the mean and variance of a log-chi-squared variable with growing degrees of freedom (see \Cref{lem:log:chi:sq}), we have
\begin{align*}
     \mean[Y_{11}]
    \;=&\;
    \mean\Big[
        \mfrac{1}{2} 
        \log\Big( \mfrac{1}{n} \chi^2_n \Big)
    \Big]
    \;=\;
    - \mfrac{1}{2n} + O\Big( \mfrac{1}{n^2} \Big)
    \;,
    \\
    \Var[Y_{11}]
    \;=&\;
        \Var\Big[
            \mfrac{1}{2} 
            \log\Big( \mfrac{1}{n} \chi^2_n \Big)
        \Big] 
    \;=\;
    \mfrac{1}{2n} + O\Big( \mfrac{1}{n^2} \Big)
    \;.
\end{align*} 
Writing
\begin{align*}
    \mu
    \;\coloneqq&\;
    - \mfrac{1}{2n}
    &\text{ and }&&
    \sigma 
    \;\coloneqq&\;
    \mfrac{1}{\sqrt{2n}},
\end{align*}
we can define the (asymptotically) standardized random variables, 
\begin{align*}
    E_i 
    \;\coloneqq\; 
    - \mfrac{\sum_{j=1}^{N-1} Y_{ij} - (N-1) \mu}{  \sigma \, \sqrt{N-1}}\;.
    \tagaligneq \label{eq:REM:energy:defn}
\end{align*}
These correspond to the i.i.d.~random energies, and allow us to express
\begin{align*}
    \cE
    \;=&\;
    \log \Big( \mfrac{1}{m} \msum_{i \leq m} e^{ - 2 \sigma \sqrt{N-1} \, E_i} \Big)
    +
    2 (N-1)  \mu 
    \\
    \;=&\;
    \log \Big( \mfrac{1}{m} \msum_{i \leq m} e^{ - \beta \sqrt{\log m} \, E_i} \Big)
    +
    2 (N-1)  \mu 
    \;,
    \tagaligneq \label{eq:expression:Ek}
\end{align*}
which is a shifted log-partition function of a random energy model with $m$ different configurations, random energies $(E_i)_{i \leq m}$, and the inverse temperature parameter
\begin{align*}
    \beta 
    \;=\;
    \mfrac{2 \sigma \sqrt{N-1}}{\sqrt{\log m}}
    \;=\;
    \mfrac{\sqrt{2 (N-1)}}{\sqrt{n \log m}}
    \;.
\end{align*}

\vspace{.5em}

Although $E_i$'s are non-Gaussian, we will show that they can be approximated by Gaussian energies for the purpose of studying the limit of $\cE$ as $n, N, m$ grow. To obtain tight approximation errors, we require a non-uniform Gaussian approximation bound (see \Cref{prop:nonuniform:CLT:crude}). Then, following known results for the Gaussian REM \citep{derrida1981random,dorlas2001large,bovier2002fluctuations}, we may conjecture that asymptotically,
\begin{align*}
    \cE 
    \;\approx\; 
    \bar \cE
    \;\coloneqq\;
    \begin{cases}
            2(N-1) \mu + \mfrac{\beta^2 \log m}{2}
            &
            \text{ if }
            \beta \leq \sqrt{2}
            \;,
            \\
            2(N-1) \mu + \sqrt{2} \, \beta \log m - \log m
            & 
            \text{ if } 
            \beta > \sqrt{2}
            \;.
        \end{cases}
        \tagaligneq \label{eq:REM:conj}
\end{align*}
The energy approximation $\bar \cE$ exhibits a phase transition at $\beta=\sqrt{2}$. Moreover, since $2(N-1)\mu = -\frac{N-1}{n} = - \frac{\beta^2 \log m}{2}$, we can express 
\begin{align*}
    \bar \cE
    \;=&\;
    \begin{cases}
            0
            &
            \text{ if }
            \beta \leq \sqrt{2}
            \;,
            \\
            - \frac{(\beta - \sqrt{2})^2}{2} \, \log m
            & 
            \text{ if } 
            \beta > \sqrt{2}
            \;,
    \end{cases}
    \\
    \;=&\;
    \begin{cases}
            0
            &
            \text{ if }
            (N-1)/n \,\leq\, \log m
            \;,
            \\
            - \big(\sqrt{(N-1) / n} \,-\, \sqrt{\log m}\big)^2
            & 
            \text{ if } 
            (N-1)/n \,>\, \log m
            \;,
    \end{cases}
    \\
    \;=&\;
    2 Z\;,
\end{align*}
where $Z$ is defined in \eqref{eq:limit} in the introduction. Compared to existing works, the additional difficulty in our setting is that $\beta$ is no longer fixed but depends on $n$, $N$ and $m$, and may asymptotically vanish or diverge.

\vspace{.5em}

We now state concentration inequalities that make \eqref{eq:REM:conj} precise. The first result concerns the ultra-high temperature regime with $\beta=o(1)$: In this case, all energies contribute equally, and applying Markov's inequality to an i.i.d.~average $\frac{1}{m} \sum_{i \leq m} e^{- \beta\sqrt{\log m} E_i}$ suffices for computing the limit. Note that in this case $\bar \cE = 0$.

\begin{proposition} \label{prop:ultrahot} Assume $\beta = o(1)$. Then there exist universal constants $C, c > 0$ such that, for all $\epsilon \in (0,1)$,
\begin{align*}
    \P\Big( 
        \,
        |\cE - \bar \cE |
        \;\geq\;  
        C
        \Big(
            \mfrac{\epsilon}{(1-\epsilon) }    
            +
            \mfrac{\beta^2 \log m}{n}
        \Big)
    \Big)
    \;\leq\; 
    \epsilon^{-2} 
    e^{ - c \log m }\;.
\end{align*}
\end{proposition}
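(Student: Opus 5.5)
The plan is to work directly with the partition sum $S \coloneqq \frac{1}{m}\sum_{i\le m} W_i$, where $W_i \coloneqq e^{2\sum_{j=1}^{N-1} Y_{ij}}$. Then $\cE = \log S$, and in the present regime $\bar\cE = 0$. The argument is a second-moment (Chebyshev) bound on $S$ around its mean, combined with an exact computation of $\mean[S]$ through the moment generating function of the log-chi-squared variable.

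\textbf{Step 1 (mean).} Since $e^{2Y_{ij}} = \frac1n\chi^2_n$, we have $\mean[e^{2Y_{ij}}]=1$. By independence over $j$, $\mean[W_i]=1$ exactly, so $\mean[S]=1$ and no centering error arises at first order.

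\textbf{Step 2 (variance).} We have $\mean[e^{4Y_{ij}}] = \mean[(\chi^2_n/n)^2] = 1+\frac2n$, so
\begin{align*}
\Var[W_i] \;=\; \big(1+\tfrac2n\big)^{N-1}-1 \;\le\; e^{2(N-1)/n}-1 \;=\; e^{\beta^2\log m}-1 .
\end{align*}
Because $\beta=o(1)$, this is at most $m^{\beta^2}\le m^{1/2}$ for large parameters. Hence
\begin{align*}
\Var[S]\;\le\; m^{-1}\big(e^{\beta^2\log m}-1\big)\;\le\; m^{-1/2}.
\end{align*}
Chebyshev's inequality then gives $\P(|S-1|\ge\epsilon)\le \epsilon^{-2}m^{-1/2}=\epsilon^{-2}e^{-c\log m}$ with $c=1/2$.

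\textbf{Step 3 (pass to the log).} On the event $|S-1|<\epsilon$, the elementary bound $|\log(1+x)|\le |x|/(1-|x|)$ (already used in \Cref{cor:log:Hanson:Wright}) gives $|\cE|\le \epsilon/(1-\epsilon)$. This already yields the claimed inequality. The extra term $C\beta^2\log m/n$ is then simply slack.

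I would nonetheless check the bookkeeping if one instead writes $\cE$ through \eqref{eq:expression:Ek} with the approximate centering $\mu=-\frac1{2n}$. The mismatch between $\mean[\sum_j 2Y_{ij}]$ and $2(N-1)\mu$ is $O((N-1)/n^2)=O(\beta^2\log m/n)$, by the $O(n^{-2})$ corrections in \Cref{lem:log:chi:sq}. This is exactly the source of the second error term, so if I follow that route I would include it explicitly rather than use the exact identity $\mean W_i=1$.

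\textbf{Main obstacle.} The step needing the most care is Step 2: the variance bound has to be uniform. The issue is that $\beta=o(1)$ only gives $m^{\beta^2}=e^{o(\log m)}$, which can still grow. So the plan is to fix a threshold, say $\beta^2\le 1/2$, which holds once the parameters are large. This gives the concrete exponent $c=1/2$, and smaller $\beta$ is absorbed by the universal constants $C$ and $c$.
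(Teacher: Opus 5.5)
Your proof is correct. It shares the paper's skeleton: a Chebyshev bound for the i.i.d.\ partition sum around its mean, then $|\log(1\pm\epsilon)|\le \frac{\epsilon}{1-\epsilon}$ on the good event. Where you differ is in how the moments are obtained.

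The paper plugs $\nu=1,2$ into the general moment formula of \Cref{lem:log:chi:sq}. That formula rests on a Stirling expansion of $\log\Gamma$ with remainder $O(N(1+|\nu|^3)/n^2)$, and this remainder is exactly where the term $\frac{\beta^2\log m}{n}$ in the statement comes from. You instead use the exact identities $\mean[\chi^2_n/n]=1$ and $\mean[(\chi^2_n/n)^2]=1+\frac2n$. These give $\mean W_i=1$ and $\Var[W_i]=(1+\frac2n)^{N-1}-1\le m^{\beta^2}-1$ with no error at all.

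Your route buys three things. It shows the $\frac{\beta^2\log m}{n}$ term is pure slack: equivalently, $\mean[e^{-\beta\sqrt{\log m}E_1}]=e^{\beta^2\log m/2}$ holds exactly and cancels $2(N-1)\mu$ exactly. It yields the explicit exponent $c=\frac12$. And it avoids the paper's step of writing the variance as a difference of two asymptotic expressions. The paper's route buys only economy of exposition, since \Cref{lem:log:chi:sq} is needed anyway for non-integer $\nu$ when verifying Cram\'er's condition.

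One small correction to your side remark: in the paper the extra error does not come from the mismatch between $\mean[Y_{11}]$ and $\mu$. Because $\mu=-\frac1{2n}$ is defined exactly, $2(N-1)\mu=-\frac{\beta^2\log m}{2}$ drops out identically. That mismatch only matters for the Gaussian approximation of $E_i$ in \Cref{prop:nonuniform:CLT:crude}, where it produces the $N^{1/2}/n^{3/2}$ term.
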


\vspace{.5em}

The next result concerns the moderate-to-low temperature regime with $\beta = \Omega(1)$: In this case, not all energies contribute equally and the empirical average $\frac{1}{m} \sum_{i \leq m} e^{- \beta\sqrt{\log m} E_i}$ is dominated by the outlier values. To capture the effects of these outliers, we adapt the approach of \cite{dorlas2001large} that studies Gaussian REM with Varadhan's lemma. The key differences are that we employ Laplace's method to explicitly compute the error bounds and accommodate the non-Gaussian (though approximately Gaussian) energies $E_i$'s.

\begin{proposition} \label{prop:moderate:cold} Assume that $\beta= \Omega(1)$, $\log m = o(N^{1/3})$ and $N = o(n^3)$. Then there exist some universal constants $C, c > 0$ such that for every $\epsilon \in (0,1)$,
\begin{align*}
    \P\Big( 
         \mfrac{|\cE - \bar \cE|}{\beta^2 \log m}  
        \;>\;
        C \Big( \mfrac{\epsilon}{(1-\epsilon) \beta \log m}
            +
            \mfrac{1}{\beta (\log m)^{1/4}} \Big)
    \Big)
    \;\leq\; 
    (1+ \epsilon^{-2}) e^{ - c  \, (\log m)^{3/4} }\;.
\end{align*}
\end{proposition}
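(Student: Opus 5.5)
We need to prove Proposition \ref{prop:moderate:cold}. Write $t \coloneqq \beta\sqrt{\log m}$ and $S \coloneqq \frac{1}{m}\sum_{i\le m} e^{-tE_i}$, so that $\cE - \bar\cE = \log S - \log m\cdot\phi(\beta)$ with $\phi(\beta)=\beta^2/2$ for $\beta\le\sqrt2$ and $\phi(\beta)=\sqrt2\beta-1$ otherwise. The plan is a Laplace-method decomposition of $S$ according to the level of the energies: partition the real line of values $x = -E_i/\sqrt{\log m}$ into a grid of width $\delta \coloneqq (\log m)^{-1/4}$ (this choice will produce the $\frac{1}{\beta(\log m)^{1/4}}$ error after dividing by $\beta^2\log m$), and write $S = \frac1m\sum_k M_k e^{t\sqrt{\log m}\,x_k(1+O(\delta/x_k))}$ where $M_k \coloneqq \#\{i: -E_i/\sqrt{\log m}\in[x_k,x_k+\delta)\}$. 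Then $\log S$ is, up to an additive $O(\beta\log m\,\delta)$ error and an additive $\log(\#\text{grid cells})$, the maximum over $k$ of $\log M_k - \log m + \beta x_k \log m$.

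Next I control the counts $M_k$. Their means are $m\,p_k$ with $p_k = \P(-E_1 \in \sqrt{\log m}[x_k,x_k+\delta))$. Since $E_1$ is a normalized sum of $N-1$ i.i.d. log-chi-square variables (with the mean/variance discrepancy of order $1/n^2$, which costs a shift of size $O(\sqrt{N}/n^{3/2}\cdot\sqrt{\log m})$ — negligible precisely because $N=o(n^3)$), Cram\'er's moderate deviation theorem (\Cref{prop:nonuniform:CLT:crude}) gives $p_k = \exp(-\tfrac{x_k^2}{2}\log m\,(1+o(1)))$ uniformly for $x_k\sqrt{\log m}=o(N^{1/6})$, which is where $\log m=o(N^{1/3})$ enters; beyond $x_k$ of order a large constant I use a crude Chernoff tail for the lower-bounded-mgf variables $Y_{1j}$. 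For cells with $m p_k \ge e^{(\log m)^{3/4}}$, Chebyshev (variance $\le mp_k$) gives $M_k = mp_k(1\pm\epsilon)$ with failure probability $\epsilon^{-2}(mp_k)^{-1}\le \epsilon^{-2}e^{-(\log m)^{3/4}}$, summed over polynomially-in-$\log m$ many cells; this yields the $\frac{\epsilon}{1-\epsilon}$ term via $|\log(1\pm\epsilon)|$. For cells with $mp_k$ small (i.e. $x_k > \sqrt2 - O(\delta)$ roughly), Markov's inequality shows $\sum_{k:\,x_k\ge\sqrt2+\delta} M_k = 0$ with probability $1-e^{-c(\log m)^{3/4}}$, so no energy exceeds level $\sqrt2+\delta$; the first-moment bound is what accounts for the extra $1$ in $(1+\epsilon^{-2})$.

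With these estimates the maximization is explicit: for the upper bound, $\log S \le \log m\max_{x\le \sqrt2+\delta}(\beta x - x^2/2) + O(\beta\delta\log m + \log\log m)+\log(1+\epsilon)$, and the max equals $\phi(\beta)\log m$ up to $O(\beta\delta\log m)$. For the lower bound, keep only the single cell at $x^\star=\min(\beta,\sqrt2-\delta)$, which has $mp_k\ge e^{c\delta\log m}\ge e^{(\log m)^{3/4}}$, giving $\log S\ge \log m(\beta x^\star - (x^\star)^2/2) - O(\beta\delta\log m)-\log(1-\epsilon)^{-1}$. Dividing by $\beta^2\log m$ and using $\beta=\Omega(1)$ gives the claimed bound. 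The main obstacle is making the Cram\'er approximation uniform with explicit relative error across all relevant cells, in particular near the edge $x\approx\sqrt2$ where the counts transition from concentrated to zero; I would handle it by proving the two-sided bound $\log p_k = -\frac{x_k^2}{2}\log m + O(x_k\delta\log m)$ directly, absorbing Cram\'er's $O(x^3/\sqrt N)$ correction and the mean-shift error into the $\delta$ slack.
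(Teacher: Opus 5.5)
Your proposal is correct and is essentially the paper's argument. It uses cells of width $\Theta((\log m)^{-1/4})$ in $-E_i/\sqrt{\log m}$, cell probabilities from the Cram\'er-type estimate of \Cref{prop:nonuniform:CLT:crude}, Chebyshev concentration of the cell counts (the $\epsilon^{-2}$ term), and a first-moment bound excluding energies beyond $\sqrt{2}$ plus a margin (the extra $1$). Your ``number of cells times the largest cell'' upper bound (lumping all $x\le 0$, which contribute at most $1$ to $S$) and your single-cell lower bound are a lighter but sufficient substitute for the paper's explicit control of $\Delta_\#$.

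The one step you still need to make explicit is the band $x_k\in[\sqrt{2}-O(\delta),\sqrt{2}+\delta)$. Its cells are neither concentrated nor shown empty, and there the trivial bound $M_k\le m$ would overshoot $\log S$ by order $\log m$. You can close this in either of two ways:
\begin{itemize}
\item add a Markov bound $\sum_{x_k\ge\sqrt{2}-C\delta}M_k\le e^{O((\log m)^{3/4})}$, holding with probability $1-e^{-c(\log m)^{3/4}}$; or
\item as the paper does via $x_0=\sqrt{2}\,\frac{2K}{2K-1}$, let the top cell straddle $\sqrt{2}$ so that its count still concentrates.
\end{itemize}
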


\subsection{Related literature on REM} \label{sec:literature:REM}

The Gaussian random energy model (REM), introduced and solved by \cite{derrida1981random}, is one of the canonical models in statistical physics that is attractive for its exact solvability. The central takeaway from the REM is its freezing phase transition: Above a critical temperature threshold, the free energy is governed by a massive number of configurations that have ``typical'' energies, whereas below that temperature, the free energy is dominated by a small number of configurations with excessively low energies. 

\vspace{.5em}

Our limiting expression, $\bar \cE$, agrees with known Gaussian REM limits. To see this, recall that \cite{derrida1981random} studies the normalized average free energy
\begin{align*}
    -
    \mfrac{\tilde T}{\tilde N}
    \,
    \mean\Big[
    \log 
    \Big( 
        \msum_{i=1}^{2^{\tilde N}} \exp\Big(
            -  \mfrac{\sqrt{\tilde N} \, \tilde J}{\sqrt{2} \, \tilde T} \eta_i \Big)
    \Big)
    \Big]\;,
    \tagaligneq \label{eq:energy:Derrida}
\end{align*}
where $\tilde T$ is the temperature, $\tilde N$ is the system size, $\tilde J$ is a scale parameter and $\eta_i$'s are i.i.d.~standard Gaussians. \cite{derrida1981random} shows that it converges to
\begin{align*}
    \begin{cases}
         - \tilde T \log 2 - \mfrac{(\tilde J)^2}{4 \tilde T}
         &\text{ if } \tilde T > \frac{\tilde J}{2 \sqrt{\log 2}}\;,
         \\
        - \tilde J \sqrt{\log 2}
         &\text{ if } \tilde T < \frac{\tilde J}{2 \sqrt{\log 2}}\;.
    \end{cases}
    \tagaligneq \label{eq:limit:Derrida}
\end{align*}
It is also well-established that the normalized free energy, i.e.~the random variable representing the log-partition function without the expectation in \eqref{eq:energy:Derrida}, converges to \eqref{eq:limit:Derrida} almost surely (see \cite{bovier2002fluctuations,dorlas2001large}). By a reparameterization with $\tilde N = \frac{\log m}{\log 2}$ and $\tilde J / \tilde T = \sqrt{2 \log 2} \,  \beta$, \eqref{eq:energy:Derrida} and \eqref{eq:limit:Derrida} are equivalent to the statement that
\begin{align*}
    -
    \mfrac{\tilde T \log 2}{\log m }
        \mean\Big[
        \log 
        \Big( 
            \msum_{i=1}^{m} e^{ - \beta \sqrt{\log m} \,  \eta_i}
        \Big)
        \Big]
    \;\overset{\tilde N \rightarrow \infty}{\rightarrow}\;
    \begin{cases}
         - \tilde T \log 2 
         - \tilde T \mfrac{\log 2}{2} \beta^2
         &\text{ if }  \beta < \sqrt{2}\;,
         \\
        - \tilde T \sqrt{2} \, (\log 2) \beta
         &\text{ if }   \beta > \sqrt{2}\;.
    \end{cases}
\end{align*}
Dividing across by $ - \tilde T \log 2$ followed by a subtraction by $1$, the above gives 
\begin{align*}
    \mfrac{1}{\log m }
        \mean\Big[
        \log 
        \Big( 
            \mfrac{1}{m} \msum_{i=1}^{m} e^{ - \beta \sqrt{\log m} \,  \eta_i}
        \Big)
        \Big]
    \;\overset{\tilde N \rightarrow \infty}{\rightarrow}\;
    \begin{cases}
         \mfrac{\beta^2}{2} 
         &\text{ if }  \beta < \sqrt{2}\;,
         \\
        \sqrt{2} \beta - 1
         &\text{ if }   \beta > \sqrt{2}\;.
    \end{cases}
\end{align*}
Rescaling both sides by $\log m$ and adding $2(N-1) \mu = - \frac{\beta^2 \log m}{2}$ to both sides gives an approximation that agrees with our result with $\cE \approx \bar \cE$.

\vspace{.5em}

Our notion of convergence differs from that in the typical REM literature. Indeed, observe that \Cref{prop:ultrahot,prop:moderate:cold} only imply convergence in probability of the log-partition function $\cE$. To reconcile this with the almost sure convergence in the REM literature \citep{dorlas2001large,bovier2002fluctuations}, we note that in those works, one takes $m=2^M$, where $M$ is the number of spins and $m$ is the number of configurations, and considers the limiting behaviour as $M \rightarrow \infty$. This is equivalent to considering the limiting behaviour along a subsequence $\{ m : m = 2^M \text{ for some } M \in \N\}$ compared to our limit $m \rightarrow \infty$. Indeed, setting $m=2^M$ in both \Cref{prop:ultrahot,prop:moderate:cold} gives probability bounds that decay at an exponential or stretched-exponential rate in $M$, which implies almost sure convergence by the Borel-Cantelli lemma.

\section{Proof of \Cref{prop:ultrahot}: Ultra-high temperature regime} \label{sec:proof:ultrahigh}

This section proves \Cref{prop:ultrahot}, which concerns the result in the ultra-high temperature regime $\beta = o(1)$. Recall that our object of interest is 
\begin{align*}
    \cE \;=\;
        \log\Big( 
            \mfrac{1}{m} \msum_{i \leq m} 
        e^{2 \sum_{j=1}^{N-1} Y_{ij} }
        \Big)
    \;,
\end{align*}
where the i.i.d.~random variables $(Y_{ij})_{1 \leq i \leq m, 1 \leq j \leq N-1}$ are distributed as $\frac{1}{2} \log\big( \frac{1}{n} \chi^2_n \big)$ and $\chi^2_n$ is a chi-squared random variable with $n$ degrees of freedom. We first recall some standard properties of the log-chi-squared distribution:

\begin{lemma}[Properties of the log-chi-squared distribution] \label{lem:log:chi:sq} For $\nu > -\frac{n}{2}$ with $\nu = o(n)$, we have
\begin{align*} 
    &\;
    \mean[Y_{11}]
    \;=\;
    - 
    \mfrac{1}{2n} 
    +
    O\Big( \mfrac{1}{n^2} \Big)
    \;,
    \qquad 
    \Var[Y_{11}]
    \;=\;
    \mfrac{1}{2n}
    + 
    O\Big( \mfrac{1}{n^2} \Big)
    \;,
    \\
    &\;\mean\Big[ e^{2 \nu \sum_{j=1}^{N-1} Y_{1j}}  \Big]
    \;=\;
    \exp\Big(
        \mfrac{(\nu^2 - \nu)(N-1)}{n}
        +
        O\Big( 
            \mfrac{(N-1) (1+|\nu|^3)}{n^2} 
        \Big)
    \Big) 
    \;.
\end{align*}
\end{lemma}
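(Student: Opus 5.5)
The plan is to work directly with the explicit moment generating function of a log-chi-squared variable. Since $Y_{11} = \frac12 \log(\frac1n \chi^2_n)$, for any $t > -\frac{n}{2}$ we have
\begin{align*}
    \mean\big[e^{2t Y_{11}}\big] \;=\; \mean\big[(\chi^2_n/n)^{t}\big] \;=\; \Big(\mfrac{2}{n}\Big)^{t} \mfrac{\Gamma(\frac n2 + t)}{\Gamma(\frac n2)}\;.
\end{align*}
Hence the cumulant generating function is $K(t) \coloneqq \log \mean[e^{2tY_{11}}] = t\log\frac{2}{n} + \log\Gamma(\frac n2+t) - \log\Gamma(\frac n2)$. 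The mean and variance of $Y_{11}$ are read off from its derivatives, namely $\mean[Y_{11}] = \frac12(\psi(\frac n2) - \log\frac n2)$ and $\Var[Y_{11}] = \frac14\psi'(\frac n2)$, where $\psi$ is the digamma function. By independence, the multiplicative moment factorizes as $\mean[e^{2\nu\sum_{j}Y_{1j}}] = e^{(N-1)K(\nu)}$. So all three claims reduce to asymptotics of the Gamma function at the large argument $x = n/2$.

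For the first two claims, I will use the standard expansions $\psi(x) = \log x - \frac{1}{2x} + O(x^{-2})$ and $\psi'(x) = \frac1x + O(x^{-2})$ for $x \to \infty$. With $x = n/2$ these give $\mean[Y_{11}] = -\frac{1}{2n} + O(n^{-2})$ and $\Var[Y_{11}] = \frac{1}{2n} + O(n^{-2})$ directly.

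For the third claim, I will Taylor expand $g(t) \coloneqq \log\Gamma(x+t) - \log\Gamma(x)$ around $t=0$ to third order with integral remainder:
\begin{align*}
    g(t) \;=\; t\psi(x) + \mfrac{t^2}{2}\psi'(x) + R(t)\;, \qquad |R(t)| \;\le\; \mfrac{|t|^3}{6}\sup_{|s|\le |t|}|\psi''(x+s)|\;.
\end{align*}
Since $\psi''(y) = O(y^{-2})$ and $|s| \le |\nu| = o(n)$ keeps $x+s \ge x/2$, the remainder satisfies $R = O(|\nu|^3/n^2)$. Substituting the expansions of $\psi$ and $\psi'$ gives
\begin{align*}
    K(\nu) \;=\; \nu\Big(-\mfrac1n + O(n^{-2})\Big) + \mfrac{\nu^2}{2}\Big(\mfrac2n + O(n^{-2})\Big) + O\big(|\nu|^3 n^{-2}\big) \;=\; \mfrac{\nu^2-\nu}{n} + O\Big(\mfrac{1+|\nu|^3}{n^2}\Big)\;,
\end{align*}
where I used $|\nu| + \nu^2 \lesssim 1 + |\nu|^3$. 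Multiplying by $N-1$ and exponentiating gives the stated formula.

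The main obstacle is making the remainder uniform in $\nu$. This is where the hypothesis $\nu = o(n)$, and in particular $\nu > -\frac n2$ bounded away from the pole, is needed: it ensures $x + s$ stays of order $n$ along the whole Taylor segment. I would state $|\nu| \le n/4$ eventually to make the constants universal.
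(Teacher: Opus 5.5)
Your proof is correct. For the mean and variance it matches the paper's argument (digamma and trigamma asymptotics at $n/2$), but for the moment formula you take a somewhat different route.

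The paper applies Stirling's series $\log\Gamma(w) = (w-\frac12)\log w - w + \frac12\log(2\pi) + \frac{1}{12w} + O(|w|^{-3})$ separately at $w=\frac n2+\nu$ and at $w=\frac n2$. This collapses the cumulant $\log\mean[e^{2\nu Y_{11}}]$ to $(\frac{n-1}{2}+\nu)\log(1+\frac{2\nu}{n}) - \nu + O(\frac{|\nu|}{n^2}+\frac{1}{n^3})$, and the paper then expands $\log(1+u)$ to second order in $u=\frac{2\nu}{n}$. You instead Taylor-expand $t\mapsto\log\Gamma(\frac n2+t)$ to second order with a $\psi''$ remainder.

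Your version is more economical in three ways:
\begin{itemize}
\item It reuses the $\psi$ and $\psi'$ expansions you already needed for the first two claims.
\item It shows directly that the leading term $\frac{\nu^2-\nu}{n}$ is $\nu\,\mean[2Y_{11}]+\frac{\nu^2}{2}\Var[2Y_{11}]$ to leading order.
\item Uniformity in $\nu$ reduces to the single explicit bound $|\psi''(y)|\le y^{-2}+2y^{-3}$ on $y\ge n/4$. The paper instead needs uniformity of Stirling's error term plus bookkeeping of the $\frac{1}{12w}$ corrections.
\end{itemize}

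The paper's version gives an intermediate closed form that stays accurate even when $\nu$ is a fixed fraction of $n$. Only the final expansion of the logarithm incurs the $O(|\nu|^3/n^2)$ error that both arguments end with, so there is no difference in the stated result.

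Both proofs genuinely need $|\nu|\le cn$ for some $c<\frac12$, so that the Gamma arguments stay of order $n$. This is what $\nu=o(n)$ supplies for large $n$, and it is exactly how the paper later invokes the lemma, with $|\nu|\le n/4$. So your remark about fixing $|\nu|\le n/4$ is the right way to make the constants universal.
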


\begin{proof}[Proof of \Cref{lem:log:chi:sq}] To compute the moments, let $\psi$ denote the digamma function and recall that for a Gamma random variable $\phi \sim \Gamma(\alpha, \theta)$ with density $\frac{x^{\alpha-1}}{\Gamma(\alpha) \, \theta^\alpha} e^{-x/\theta}$, 
\begin{align*}
    \mean[\log \phi ] \;=&\; \psi(\alpha) + \log \theta
    &\text{ and }&&
    \Var[\log \phi] \;=&\; \partial \psi(\alpha)
    \;.
\end{align*}
Also note that $n e^{2 Y_{11}} \overset{d}{=} \chi^2_n \sim \Gamma(\frac{n}{2},2)$. By the standard approximation of a polygamma function, we have
\begin{align*}
    \mean[Y_{11}]
    \;=\;
    \mean\Big[ \mfrac{1}{2} \log\Big( \mfrac{1}{n} \chi^2_n \Big) \Big]
    \;=&\;
    \mfrac{1}{2} \Big( \psi\Big( \mfrac{n}{2} \Big) + \log 2  - \log n \Big)
    \\
    \;=&\;
    \mfrac{1}{2} 
    \Big( 
        \log\Big( \mfrac{n}{2} \Big) 
        -
        \mfrac{1}{n}
        +
        O\Big( \mfrac{1}{n^2} \Big)
        +
        \log 2 - \log n
    \Big)
    \\
    \;=&\;
    -\mfrac{1}{2n}
    +
    O\Big( \mfrac{1}{n^2} \Big)
    \;,
\end{align*}
as well as 
\begin{align*}
    \Var[Y_{11}]
    \;=\; 
    \mfrac{1}{4} \Var\Big[  \log\Big( \mfrac{1}{n} \chi^2_n \Big)  \Big]
    \;=\;
    \mfrac{1}{4} \,\partial \psi\Big( \mfrac{n}{2} \Big) 
    \;=\;
    \mfrac{1}{2n}
    +
    O\Big( \mfrac{1}{n^2} \Big)
    \;.
\end{align*}
To prove the m.g.f.~formula, let $\Gamma$ be the Gamma function. For $\nu > -\frac{n}{2}$, by independence and moment formulas for chi-squared variables, we have
\begin{align*}
        \mean\Big[ e^{2 \nu \sum_{j=1}^{N-1} Y_{1j}}  \Big]
        \;=&\;
        \big( \mean[ e^{2 \nu  Y_{11} } ] \big)^{N-1}
        \;=\;
        \Big( 
            \mean\Big[ \Big(\mfrac{1}{n} \chi^2_n  \Big)^\nu \Big] 
        \Big)^{N-1}
        \\
        \;=&\;
        \Big( \mfrac{2}{n} \Big)^{\nu (N-1)}
        \Big( 
            \mfrac{\Gamma\big( \frac{n}{2} + \nu \big)}{\Gamma\big( \frac{n}{2} \big)}
        \Big)^{N-1}
        \\
        \;=&\;
        \exp\Big( 
            - 
            \nu (N-1) \log \mfrac{n}{2}
            +
            (N-1) 
            \Big( 
                \log \Gamma\Big( \mfrac{n}{2} + \nu \Big)
                -
                \log \Gamma\Big( \mfrac{n}{2} \Big)
            \Big) 
        \Big)
        \;.
\end{align*}
Consider the asymptotic expansion
\begin{align*} 
    &\;
    \log \Gamma(w) \,=\, \Big( w - \mfrac{1}{2}\Big) \log(w)  - w +  \mfrac{1}{2} \log(2\pi)
    +
    \mfrac{1}{12w}
    +
    O( |w|^{-3})
    \qquad
    \text{ as } |w| \rightarrow \infty
\end{align*}
which holds uniformly on sets of the form $|\arg(w)\,| < \pi-\delta$ for a fixed $\delta > 0$. Noting that $\nu = o(n)$, we obtain 
\begin{align*}
    \mean\Big[ &\, e^{2 \nu \sum_{j=1}^{N-1} Y_{1j}}  \Big]
    \\
    &\;=\;
    \exp\Big( 
            - 
            \nu (N-1) \log \mfrac{n}{2}
            +
            O\Big( \mfrac{N}{n^3} \Big)
    \\
    &\hspace{4em}
            +
            (N-1) 
            \Big( 
                \Big( \mfrac{n-1}{2} + \nu \Big) \log\Big(\mfrac{n}{2} + \nu \Big)
                -
                \mfrac{n-1}{2} \log \mfrac{n}{2} 
                -
                \nu
                + 
                \mfrac{1}{6 n + 12\nu }
                -
                \mfrac{1}{6 n }
            \Big) 
        \Big)
    \\
    \;=&\;
    \exp\Big(
            (N-1) 
            \Big(
                \nu \log\Big( 1 + \mfrac{2 \nu}{n} \Big)
                + 
                \mfrac{n-1}{2} \log\Big(1 + \mfrac{2\nu}{n}  \Big)
                - 
                \nu  
            \Big) 
            \,+\,
            O\Big( \mfrac{N(1+|\nu|)}{n^2} \Big)
        \Big)
    \\
    \;=&\;
    \exp\Big(
             \mfrac{2 \nu^2 (N-1)}{n} 
                + 
                \mfrac{(N-1)(n-1)}{2} \Big( \mfrac{2\nu}{n} - \mfrac{2 \nu^2}{n^2} \Big) 
                - 
                \nu (N-1) 
            \,+\,
            O\Big( \mfrac{N(1+|\nu|^3)}{n^2} \Big)
        \Big)
    \\
    \;=&\;
    \exp\Big(
             \mfrac{(\nu^2 - \nu) (N-1)}{n} 
                +
            O\Big( \mfrac{N (1+|\nu|^3)}{n^2} \Big)
        \Big)
    \;.
\end{align*}
\end{proof}

\vspace{.5em}

The proof of \Cref{prop:ultrahot} follows from applying the Markov inequality on an empirical average of $m$ i.i.d.~random variables $e^{- \beta \sqrt{\log m} E_i}$.

\begin{proof}[Proof of \Cref{prop:ultrahot}] We first compute the mean and variance of these random variables: For $\nu > - \frac{n}{2}$, \Cref{lem:log:chi:sq} allows us to compute
\begin{align*}
    \mean\big[ e^{- \nu \beta \sqrt{\log m} E_1} \big]
    \;\overset{\eqref{eq:expression:beta}}{=}&\;
    \mean\big[ e^{- 2 \nu \sigma \sqrt{N-1} \,  E_1} \big]
    \\
    \;\overset{\eqref{eq:REM:energy:defn}}{=}&\;
    e^{ - 2 \nu (N-1) \mu}
    \,
    \mean\big[ 
        e^{ 2 \nu \sum_{j=1}^{N-1} Y_{1j} }
    \big]
    \\
    \;=&\;
    \exp\Big(  \mfrac{\nu (N-1)}{n} + \mfrac{(\nu^2 - \nu) (N-1)}{n}
        +
        O\Big( 
            \mfrac{N (1+|\nu|^3)}{n^2} 
        \Big)  \Big)
    \\
    \;=&\;
    \exp\Big(  \mfrac{\nu^2  (N-1)}{n}
        +
        O\Big( 
            \mfrac{N (1+|\nu|^3)}{n^2} 
        \Big)  \Big)
    \\
    \;=&\;
    \exp\Big(  \mfrac{\nu^2  \beta^2 \log m}{2}
        +
        O\Big( 
            \mfrac{\beta^2 (1+|\nu|^3) \log m}{n} 
        \Big)  
    \Big)
    \;.
\end{align*}
This implies 
\begin{align*}
    \mean\big[ e^{- \beta \sqrt{\log m}\, E_1} \big]
    \;=&\;
    e^{\frac{\beta^2 \log m}{2} + O\big(\frac{\beta^2 \log m}{n}\big)}
    \;,
    \tagaligneq \label{eq:REM:mean:compute}
    \\[1em]
    \Var\big[ e^{- \beta \sqrt{\log m}\, E_1} \big]
    \;=&\;
    \mean\big[ e^{- 2\beta\sqrt{\log m}\, E_1}  \big]
    -
    \mean\big[ e^{- \beta\sqrt{\log m}\, E_1}  \big]^2
    \\
    \;=&\;
    \big( 
        e^{2 \beta^2 \log m} - e^{\beta^2 \log m}
    \big) 
    \,
    e^{O\big(\frac{\beta^2 \log m}{n}\big)}
    \;,
    \tagaligneq \label{eq:REM:var:compute}
\end{align*}
and therefore 
\begin{align*}
    \mean\big[ e^{- \beta\sqrt{\log m}\, E_1} \big]^{-2}
    \,
    \Var\big[ e^{- \beta \sqrt{\log m}\, E_1} \big]
    \;=&\; 
    \big( e^{\beta^2 \log m} - 1 \big) \,
    e^{ O( \beta^2 \log m  ) }
    \;.
\end{align*}
Now recall that 
\begin{align*}
    e^{\cE - 2 (N-1) \mu} 
    \;=\;
    \mfrac{1}{m} \msum_{i \leq m} e^{ - \beta\sqrt{\log m}\, E_i}
    \;.
\end{align*}
For $\epsilon  \in (0,1)$, consider the event 
\begin{align*}
    A_\epsilon \;\coloneqq\;
    \Big\{ 
        \big|e^{\cE - 2 (N-1) \mu}  - \mean\big[ e^{ - \beta \sqrt{\log m}\, E_1} \big] \big| 
        \;\leq\;
        \epsilon 
        \,
        \mean\big[ e^{ - \beta \sqrt{\log m}\, E_1} \big]
    \Big\}
    \;.
\end{align*}
By Markov's inequality and independence of $E_i$'s, we get that for any $\epsilon \in (0,1)$,
\begin{align*}
    1 - \P( A_\epsilon )
    \;\leq&\;
    \mfrac{1}{m \epsilon^2} \, \big( e^{\beta^2 \log m} - 1 \big) \, e^{O(\beta^2 \log m) }
    \\
    \;\leq&\;
    \epsilon^{-2} \, e^{ -  (1  + o(1) ) \log m }
    \;,
\end{align*}
where we have used the assumption that $\beta= o(1)$. On the event $A_\epsilon$, recalling that $-\frac{\beta^2\log m}{2} = 2(N-1)\mu$ and $\bar \cE=0$ in the case $\beta = o(1)$, we have that for all $\epsilon \in (0,1)$,
\begin{align*}
    &\;|\cE - \bar \cE| 
    \;=\;
    \Big| 
        \cE - 2 (N-1) \mu  - \mfrac{\beta^2 \log m}{2}
    \Big|
    \;=\; 
    \Big| 
        \log \, \exp (\cE - 2 (N-1) \mu)   - \mfrac{\beta^2 \log m}{2} 
    \Big|
    \\
    &\;=\;
    \Big| 
        \log  
        \Big( 
            1
            +
            \mfrac{
                e^{\cE - 2 (N-1) \mu}
                -
                 \mean\big[ e^{ - \beta\sqrt{\log m}\, E_1} \big] 
            }
            {
                 \mean\big[ e^{ - \beta\sqrt{\log m}\, E_1} \big] 
            }
        \Big)
        +
        \log  \mean\big[ e^{ - \beta\sqrt{\log m}\, E_1} \big]
        - 
        \mfrac{\beta^2 \log m}{2} 
    \Big|
    \\
    &\;\leq\;
    \Big| 
    \log 
        \Big( 
            1
            +
            \mfrac{
                e^{\cE - 2 (N-1) \mu}
                -
                 \mean\big[ e^{ - \beta\sqrt{\log m}\,E_1} \big] 
            }
            {
                 \mean\big[ e^{ - \beta\sqrt{\log m}\,E_1} \big] 
            }
        \Big)
    \Big|
    +
    \Big| 
        \log  \mean\big[ e^{ - \beta\sqrt{\log m}\,E_1} \big] 
        - 
        \mfrac{\beta^2 \log m}{2} 
    \Big| 
    \\
    &\;\leq\;
    \max\{ | \log(1-\epsilon) | \,,\, |\log(1+\epsilon)| \} 
    +
    O\Big( \mfrac{\beta^2 \log m}{n} \Big)
    \\
    &\;=\;
    O\Big( 
         \mfrac{\epsilon}{(1-\epsilon) }    
         +
         \mfrac{\beta^2 \log m}{n}
    \Big)
    \;.
\end{align*}
This implies the existence of a sufficiently large universal constant $C > 0$ and a sufficiently small universal constant $c > 0$ such that 
\begin{align*}
    \P\Big( 
        \,
        |\cE - \bar \cE|
        \;\geq\;  
        C
        \Big(
            \mfrac{\epsilon}{(1-\epsilon) }    
            +
            \mfrac{\beta^2 \log m}{n}
        \Big)
    \Big)
    \;\leq\; 
    \epsilon^{-2} 
    e^{ - c \log m }\;.
\end{align*}
\end{proof}

\section{Proof of \Cref{prop:moderate:cold}: Moderate-to-cold temperature regime} \label{sec:proof:moderate:cold}

We seek to prove \Cref{prop:moderate:cold}, the result concerning the moderate-to-cold temperature regime $\beta= \Omega(1)$. To study $\cE$ with the techniques from the Gaussian REM literature, the first step is to approximate each random energy $E_i$ by a Gaussian. A crude approximation follows directly from CLT, since each energy is an empirical average of i.i.d.~random variables whose asymptotic mean is negligible and whose asymptotic variance is one:
\begin{align*}
     E_1 
    \;=\; 
    - \mfrac{\sum_{j=1}^{N-1} Y_{1j} - (N-1) \mu}{  \sigma \, \sqrt{N-1}}
    \;=\;
    - \mfrac{\sqrt{2n} \sum_{j=1}^{N-1} (Y_{1j} + \frac{1}{2n}) }{\sqrt{N-1}}
    \;.
\end{align*}
However, since the REM computation will involve measuring the c.d.f.~of $E_1$ at locations that grow as $\Omega(\sqrt{\log m})$, we require tighter and location-dependent Gaussian approximation error terms compared to the uniform $\frac{1}{\sqrt{n}}$ error from the standard Berry-Ess\'een bound. We obtain such an approximation by applying classical techniques from a Cramér-type moderate deviation theorem \citep{cramer1938nouveau}. In the result below and throughout this section, we write $\Phi$ for the c.d.f.~of $\cN(0,1)$ and $\Phi^c(x) = 1 -\Phi(x)$.

\begin{proposition}[Cramér-type moderate deviation theorem for empirical averages of log-chi-squared variables] \label{prop:nonuniform:CLT:crude} There exists some universal constant $a > 0$ such that 
\begin{align*}
    \P( E_1 > x ) 
    \;=&\;
    \Phi^c(x) 
     \,
     \Big( 1 + O\Big( \mfrac{1}{n} + \mfrac{N^{1/2}}{n^{3/2}}  + \mfrac{1+|x|^3}{\sqrt{N}}\Big) \Big)
    \qquad 
    \text{ for } 
    \quad 
    0 \leq x \leq a N^{1/6}
    \;,
    \\
    \P( E_1 < x ) 
    \;=&\;
    \Phi(x) 
     \,
     \Big( 1 + O\Big( \mfrac{1}{n} + \mfrac{N^{1/2}}{n^{3/2}}  + \mfrac{1+|x|^3}{\sqrt{N}}\Big) \Big)
    \qquad 
    \text{ for } 
    \quad 
    - a N^{1/6} \leq x \leq 0
    \;.
\end{align*}
Suppose in addition that $|x| = o(N^{1/6})$ and $N=o(n^3)$. If $x > 0$ with $x = \omega(1)$, then 
\begin{align*}
    \P( E_1 > x ) 
    \;=\; 
    (1+o(1)) \, e^{ - \frac{x^2}{2} + o(x) }
    \;.
\end{align*}
If $x < 0$ with $|x| = \omega(1)$, then
\begin{align*}
    \P( E_1 < x ) 
    \;=\; 
    (1+o(1)) \, e^{ - \frac{x^2}{2} + o(|x|) }
    \;.
\end{align*}
If instead $x=O(1)$, we have 
\begin{align*}
    \P( E_1 > x ) 
    \;=&\;
    \begin{cases}
        \Theta( e^{-x^2/2} ) 
        & 
        \text{ if } x > 0\;,
        \\
        \frac{1}{2} (1 + o(1))
        & 
        \text{ if } x = o(1)\;,
        \\
        1 - \Theta( e^{-x^2/2} ) 
        & 
        \text{ if } x < 0\;.
    \end{cases}
\end{align*}
\end{proposition}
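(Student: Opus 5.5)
We will reduce \Cref{prop:nonuniform:CLT:crude} to the classical Cramér moderate deviation theorem for sums of i.i.d.\ variables with a finite exponential moment. The error terms in the statement come from two separate mismatches, and we handle them one at a time.

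\textbf{Step 1: exact standardization.} $E_1$ is standardized with the asymptotic constants $\mu=-\frac{1}{2n}$ and $\sigma=\frac{1}{\sqrt{2n}}$, not the exact ones. We therefore introduce the exactly standardized variables
\[
\xi_j \coloneqq -\frac{Y_{1j}-\mean[Y_{11}]}{\sqrt{\Var[Y_{11}]}}
\qquad\text{and}\qquad
S\coloneqq (N-1)^{-1/2}\msum_j \xi_j ,
\]
so that $E_1 = aS+b$. By \Cref{lem:log:chi:sq}, the scale is $a=\sqrt{\Var[Y_{11}]}/\sigma = 1+O(1/n)$. The shift is
\[
b=-\sqrt{N-1}\,(\mean[Y_{11}]-\mu)/\sigma = O\big(\sqrt{N}\cdot n^{-2}\cdot \sqrt{n}\big)=O(N^{1/2}n^{-3/2}).
\]
Hence
\[
\P(E_1>x)=\P\big(S>(x-b)/a\big), \qquad (x-b)/a = x + O\big(|x|/n + N^{1/2}n^{-3/2}\big).
\]

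\textbf{Step 2: Cramér's theorem for $S$.} The $\xi_j$ have a finite m.g.f.\ in a neighbourhood of $0$, and its radius is uniform in $n$. This follows from the exact formula $\mean[e^{2\nu Y_{11}}]=(2/n)^\nu\Gamma(n/2+\nu)/\Gamma(n/2)$: after rescaling by $\sqrt{2n}$, the cumulants of $\xi_1$ are uniformly bounded, i.e.\ $\log\mean e^{t\xi_1} = t^2/2 + O(|t|^3)$ for $|t|\le t_0$ with $t_0$ independent of $n$. Cramér's theorem, with constants depending only on $t_0$ and the cumulant bounds, then gives
\[
\P(S>y)=\Phi^c(y)\big(1+O((1+y^3)/\sqrt N)\big) \quad\text{for } 0\le y\le aN^{1/6},
\]
and the analogous statement for the left tail.

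\textbf{Step 3: transferring the shift.} We pass from $\Phi^c(y)$ with $y=(x-b)/a$ back to $\Phi^c(x)$ using the Mills-ratio bound $\Phi^c(y)/\Phi^c(x)=\exp(O((1+x)|y-x|))$. Here
\[
(1+x)|y-x| = O\big(x^2/n + xN^{1/2}n^{-3/2}\big).
\]
Within the range $x\lesssim N^{1/6}$, the term $x\,N^{1/2}n^{-3/2}$ is not automatically absorbed by the stated $N^{1/2}n^{-3/2}$. This is the step we expect to be the main obstacle. We will first try AM--GM,
\[
xN^{1/2}n^{-3/2}\le \frac{x^3}{\sqrt N}+\Big(\frac{N}{n^{3/2}}\Big)^{3/4}\cdot(\ldots),
\]
combined with $N^{1/6}\le$ range. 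Failing that, we accept a slightly weaker (but still sufficient for later use) form of the constants. Similarly, $x^2/n\le x^3/\sqrt N + O(1/n)$ holds whenever $N\lesssim n^2$; otherwise $x^2/n\le N^{1/3}/n\le N^{1/2}n^{-3/2}$. The left tail ($x\le0$) is handled symmetrically.

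\textbf{Step 4: the corollaries.} Take $|x|=o(N^{1/6})$ and $N=o(n^3)$. Then every relative error term is $o(1)$, so $\P(E_1>x)=(1+o(1))\Phi^c(x)$. For $x=\omega(1)$, Mills' ratio gives
\[
\Phi^c(x)=\frac{e^{-x^2/2}}{x\sqrt{2\pi}}(1+o(1))=e^{-x^2/2+o(x)},
\]
since $\log x=o(x)$; the case $x\to-\infty$ is the same. For $x=O(1)$, the claims follow from $\Phi^c(x)=\Theta(e^{-x^2/2})$ on bounded sets, $\Phi^c(o(1))\to\tfrac12$, and $1-\P(E_1\le x)$ for $x<0$ via the left-tail estimate.
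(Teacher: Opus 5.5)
Your route is the paper's: standardize exactly, apply Cram\'er's theorem to $S$ with an exponential-moment bound that is uniform in $n$, then transfer from $y=(x-b)/a$ back to $x$. The obstacle you flag in Step~3 is genuine. No AM--GM manipulation can remove it, because the first display of the proposition is false as stated.

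From $\psi(z)=\log z-\frac{1}{2z}-\frac{1}{12z^2}+O(z^{-4})$ and $\psi'(z)=\frac1z+\frac{1}{2z^2}+O(z^{-3})$ one gets $\mean[Y_{11}]=-\frac{1}{2n}-\frac{1}{6n^2}+O(n^{-4})$ and $2n\Var[Y_{11}]=1+\frac1n+O(n^{-2})$. So your shift is $b=\frac{\sqrt{2(N-1)}}{6n^{3/2}}(1+O(\frac1n))>0$ and your scale $a$ exceeds $1$; hence $x-y\ge b/a$ for every $x\ge 0$. Now take $N=n^2$ and $x=n^{1/8}$, well inside the Cram\'er range. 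The standard normal density $\phi$ is decreasing on $[y,x]\subset[0,\infty)$ and $\phi(x)/\Phi^c(x)\ge x$, so $\Phi^c(y)/\Phi^c(x)-1\ge x(x-y)\ge(1+o(1))\frac{\sqrt2}{6}\,n^{-3/8}$. Meanwhile Cram\'er's theorem for $S$ gives $\P(S>y)/\Phi^c(y)=1+O(n^{-5/8})$. Hence $\P(E_1>x)/\Phi^c(x)-1\gtrsim n^{-3/8}$, whereas the claimed error $\frac1n+\frac{N^{1/2}}{n^{3/2}}+\frac{1+x^3}{\sqrt N}$ is $O(n^{-1/2})$.

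The paper's own proof reaches the stated bound only by passing over this same step. Its ``first-order Taylor expansion'' writes $\tilde x-x=O(\frac1n+\frac{N^{1/2}}{n^{3/2}})$, although the scale mismatch actually contributes $O(|x|/n)$. It then converts the additive Taylor error $\phi(\eta)\,|\tilde x-x|$ into a relative one without the hazard factor $\phi(x)/\Phi^c(x)\asymp 1+x$ --- exactly the factor you kept.

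So your fallback is the right move, but it must be made explicit and carried into Step~4. What Steps~1--3 actually prove, for $0\le x\lesssim N^{1/6}$ (and symmetrically for $x\le 0$), is
\[
\P(E_1>x)=\Phi^c(x)\,e^{O((1+x)(x/n+N^{1/2}n^{-3/2}))}\big(1+O(\tfrac{1+x^3}{\sqrt N})\big).
\]
This is the stated display with an additional $(1+|x|)N^{1/2}n^{-3/2}$ in the error. The $x^2/n$ part is absorbable, but not via your case split: $N^{1/3}/n\le N^{1/2}n^{-3/2}$ holds only when $N\ge n^3$. For $N\le n^3$, split at $x=(N/n)^{1/4}$ instead.

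With this correction the opening claim of Step~4 fails. Under $N=o(n^3)$ and $|x|=o(N^{1/6})$ the extra term need not vanish. For $N=n^{2.9}$ and $x=N^{1/7}$ it is of order $n^{0.36}$, and the hazard bound gives $\P(E_1>x)/\Phi^c(x)\ge(1+o(1))\,e^{(1+o(1))xb}\to\infty$. So $\P(E_1>x)=(1+o(1))\Phi^c(x)$ is not available. Keep the exponential form instead. Since $|x|=o(n^{1/2})$ and $N^{1/2}n^{-3/2}=o(1)$, the exponent is $o(|x|)$ when $|x|=\omega(1)$ and $o(1)$ when $x=O(1)$. This gives $e^{-x^2/2+o(|x|)}$ and the three bounded-$x$ cases. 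Those are the only forms used downstream (in \Cref{lem:ignore:I:plus,lem:conc:I:minus} and the formula for $\tilde P$), so the later argument survives. The first display of the proposition, however, has to be restated, not proved as written.
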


\subsection{Proof of \Cref{prop:nonuniform:CLT:crude}}

Let $V_1, \ldots, V_N$ be i.i.d.~random variables with zero mean and unit variance such that
\begin{align*}
    \mean[e^{t_0|V_1|}] \leq c
    \qquad 
    \text{ for some universal constants }
    \;\;
    t_0, \; c \,>\, 0
    \;.
    \tagaligneq \label{eq:cramer:condition}
\end{align*}
A classical result due to \cite{cramer1938nouveau} says that
there exist constants $A, a > 0$ that depend only on $t_0$ and $c$ such that
\begin{align*}
    \bigg| 
        \,
        \mfrac{
            \P(\frac{1}{\sqrt{N}} \sum_{i=1}^N V_i > x)
        }{
            \Phi^c(x)
        } 
        \,-\,
        1
        \,
    \bigg|
    \;\leq\;
    \mfrac{A}{\sqrt{N}} (1+x^3)
    \qquad 
    \text{ for } 
    0 \leq x \leq a N^{1/6}
    \;.
    \tagaligneq \label{eq:cramer}
\end{align*}
This is known as the Cram\'er-type moderate deviation theorem. See \cite{petrov1975sums} for a textbook reference, and \cite{liu2023cramer} for recent extensions to the setting of locally dependent variables. Rewriting \eqref{eq:cramer} gives that, for $0 \leq x \leq a N^{1/6}$,
\begin{align*}
    \P\Big( 
        \,\mfrac{1}{\sqrt{N}} \msum_{i=1}^N V_i > x 
    \Big)
    \;=\;
     \Phi^c(x) 
     \,
     \Big( 1 + O\Big( \mfrac{1+|x|^3}{\sqrt{N}}\Big) \Big)
     \;.
     \tagaligneq \label{eq:cramer:pos}
\end{align*}
Moreover, replacing $V_i$ by $-V_i$ in \eqref{eq:cramer} and noting that $\Phi^c(-x) = \Phi(x)$, we get that for $- a N^{1/6} \leq x \leq 0$,
\begin{align*}
    \P\Big( 
        \,\mfrac{1}{\sqrt{N}} \msum_{i=1}^N V_i < x 
    \Big)
    \;=\;
     \Phi(x) 
     \,
     \Big( 1 + O\Big( \mfrac{1+|x|^3}{\sqrt{N}}\Big) \Big)
     \;.
     \tagaligneq \label{eq:cramer:neg}
\end{align*}
We first seek to apply this result to the empirical average 
\begin{align*}
    - \mfrac{1}{\sqrt{N-1}} \msum_{j=1}^{N-1} \Big(\mfrac{Y_{1j} - \mean[Y_{11}]}{\sqrt{\Var[Y_{11}]}} \Big)\;.
\end{align*}
To this end, we first compute 
\begin{align*}
    \mean\Big[ 
        \exp\Big(
                \mfrac{| Y_{11} - \mean[Y_{11}]|}{(\Var[Y_{11}])^{1/2}}
        \Big)
    \Big]
    \;\leq&\;
    \mean\Big[ 
        \exp\Big(
                \mfrac{Y_{11} - \mean[Y_{11}]}{(\Var[Y_{11}])^{1/2}}
        \Big)
    \Big]
    +
    \mean\Big[ 
        \exp\Big(
                - \mfrac{Y_{11} - \mean[Y_{11}]}{(\Var[Y_{11}])^{1/2}}
        \Big)
    \Big]
    \;.
\end{align*}
By \Cref{lem:log:chi:sq},
\begin{align*}
    \mean[Y_{11}]
    \;=\;
    - 
    \mfrac{1}{2n} 
    +
    O\Big( \mfrac{1}{n^2} \Big)
    \;,
    \qquad 
    \Var[Y_{11}]
    \;=\;
    \mfrac{1}{2n}
    + 
    O\Big( \mfrac{1}{n^2} \Big)
    \;.
    \tagaligneq \label{eq:log:chi:sq:mean:var}
\end{align*}
In particular $(\Var[Y_{11}])^{-1/2} = \sqrt{2n} + O(n^{-1/2})$, 
so there exists some universal constant $n_0 \in \N$ such that for all $n \geq n_0$, $(\Var[Y_{11}])^{-1/2} \leq n/2$. Then for all $n \geq n_0$, we can apply \Cref{lem:log:chi:sq} with $\nu = \pm \frac{1}{2} (\Var[Y_{11}])^{-1/2}$ and get that
\begin{align*}
    &\;
    \mean\Big[ 
        \exp\Big(
                \mfrac{Y_{11} - \mean[Y_{11}]}{(\Var[Y_{11}])^{1/2}}
        \Big)
    \Big]
    \\
    &\;=\;
     \exp\Big(
            \mfrac{\frac{1}{4} (\Var[Y_{11}])^{-1} - \frac{1}{2} (\Var[Y_{11}])^{-1/2}}{n}
            -
            \mfrac{\mean[Y_{11}]}{(\Var[Y_{11}])^{1/2}}
            +
            O\Big( 
                \mfrac{(\Var[Y_{11}])^{-3/2}}{n^2} 
            \Big)
        \Big)
    \\
    &\;=\;
     \exp( O(1) )
    \;=\; O(1)
    \;,
\end{align*}
and similarly 
\begin{align*}
    \mean\Big[ 
        \exp\Big(
            -
                \mfrac{Y_{11} - \mean[Y_{11}]}{(\Var[Y_{11}])^{1/2}}
        \Big)
    \Big] \;=\; O(1)
    \;.
\end{align*}
In other words, the condition \eqref{eq:cramer:condition} is satisfied with $t_0 = 1$.  Applying \eqref{eq:cramer:pos} and \eqref{eq:cramer:neg} then gives, for $0 \leq x \leq a N^{1/6}$,
\begin{align*}
    \P\Big( - \mfrac{1}{\sqrt{N-1}} \msum_{j=1}^{N-1} \Big(\mfrac{Y_{1j} - \mean[Y_{11}]}{\sqrt{\Var[Y_{11}]}} \Big) > x \Big) 
    =&\;
    \Phi^c(x) 
     \,
     \Big( 1 + O\Big( \mfrac{1+|x|^3}{\sqrt{N}}\Big) \Big)
    \;,
\end{align*}
and that for $- a N^{1/6} \leq x \leq 0$,
\begin{align*}
    \P\Big( - \mfrac{1}{\sqrt{N-1}} \msum_{j=1}^{N-1} \Big(\mfrac{Y_{1j} - \mean[Y_{11}]}{\sqrt{\Var[Y_{11}]}} \Big) < x \Big)
    =&\;
    \Phi(x) 
     \,
     \Big( 1 + O\Big( \mfrac{1+|x|^3}{\sqrt{N}}\Big) \Big)
    \;.
\end{align*}
To rearrange this into a statement about our target quantity,
\begin{align*}
     E_1 
    \;=\;
    - \mfrac{\sqrt{2n} \sum_{j=1}^{N-1} (Y_{1j} + \frac{1}{2n}) }{\sqrt{N-1}}
    \;,
\end{align*}
we replace $x$ above with 
\begin{align*}
    \tilde x \;\coloneqq\; \mfrac{(2n)^{-1/2} \, x + \sqrt{N-1}\, \mean[Y_{11}] + \frac{\sqrt{N-1}}{2n}}{\sqrt{\Var[Y_{11}]}}
    \;=&\;
    \mfrac{1}{\sqrt{2n \Var[Y_{11}] }}
    \Big(  x + \sqrt{2n(N-1)}  \Big(\mean[Y_{11}] + \mfrac{1}{2n} \Big) \Big)
    \\
    \;\overset{\eqref{eq:log:chi:sq:mean:var}}{=}&\;
    \mfrac{1}{\sqrt{1 + O(n^{-1}) }}
    \Big(  x - O\Big(\mfrac{N^{1/2}}{n^{3/2}}  \Big) \Big)
    \\
    \;=&\;
    x + O\Big( \mfrac{1}{n} + \mfrac{N^{1/2}}{n^{3/2}} \Big)
    \;.
\end{align*}
Since the derivative of $\Phi(x)$ is the standard Gaussian p.d.f., by a first-order Taylor expansion, we obtain that for $0 \leq x \leq a N^{1/6}$,
\begin{align*}
    \P(E_1 > x )
    \;=&\;
    \Phi^c(\tilde x) 
     \,
     \Big( 1 + O\Big( \mfrac{1+|\tilde x|^3}{\sqrt{N}}\Big) \Big)
     \;=\;
     \Phi^c(x) 
     \,
     \Big( 1 + O\Big( \mfrac{1}{n} + \mfrac{N^{1/2}}{n^{3/2}}  + \mfrac{1+| x|^3}{\sqrt{N}}\Big) \Big)
    \;,
\end{align*}
and that for $- a N^{1/6} \leq x \leq 0$,
\begin{align*}
    \P(E_1 < x ) 
    =&\;
    \Phi(x) 
     \,
     \Big( 1 + O\Big( \mfrac{1}{n}+ \mfrac{N^{1/2}}{n^{3/2}}  + \mfrac{1+| x|^3}{\sqrt{N}}\Big) \Big)
    \;.
\end{align*}
This proves the first set of desired bounds. 

\vspace{1em}

Suppose $x > 0$ with $|x| = \omega(1)$. A standard Gaussian tail estimate gives
\begin{align*}
    \Phi^c(x)
    \;=\;
    \mfrac{e^{-x^2/2} }{\sqrt{2\pi}\, x (1+o(1))}
    \;=\;
    e^{ - \frac{x^2}{2} + o(|x|) }
    \;.
\end{align*}
Provided that $|x| = o(N^{1/6})$ and $N = o(n^3)$, we obtain 
\begin{align*}
    \P( E_1 > x ) 
    \;=\; 
    (1+o(1)) \, e^{ - \frac{x^2}{2} + o(|x|) }
    \;.
\end{align*}
Suppose instead $x < 0$ with $|x| = \omega(1)$  and $N = o(n^3)$. Then
\begin{align*}
    \P( E_1 < x ) 
    \;=\; 
    (1+o(1)) \, e^{ - \frac{x^2}{2} + o(|x|) }
    \;.
\end{align*}
If instead $x=O(1)$, we have 
\begin{align*}
    \P( E_1 > x ) 
    \;=&\;
    (1+o(1)) \, \Phi^c(x)
    \;=\;
    \begin{cases}
        \Theta( e^{-x^2/2} ) 
        & 
        \text{ if } x > 0\;,
        \\
        \frac{1}{2} (1 + o(1))
        & 
        \text{ if } x = o(1)\;,
        \\
        1 - \Theta( e^{-x^2/2} ) 
        & 
        \text{ if } x < 0\;.
    \end{cases}
\end{align*}
This gives the second set of desired bounds. \qed

\subsection{Proof body of \Cref{prop:moderate:cold}} 

We seek to study, for $\beta = \Omega(1)$, the quantity
\begin{align*}
    \cE
    \;=&\;
    \log \Big( \mfrac{1}{m} \msum_{i \leq m} e^{ - \beta \sqrt{\log m} \, E_i} \Big)
    +
    2 (N-1)  \mu 
    \;.
\end{align*}
Let $x_0 > 0$ be chosen later, and write
\begin{align*}
    e^{\cE - 2 (N-1) \mu} 
    \;=&\;
    \mfrac{1}{m}
    \msum_{i \leq m}
    \ind_{\{| E_i| \leq x_0 \sqrt{\log m} \}} \, e^{ - \beta \sqrt{\log m} E_i} 
    + 
    \mfrac{1}{m}
    \msum_{i \leq m}
    \ind_{\{| E_i| > x_0 \sqrt{\log m} \}} \, e^{ - \beta \sqrt{\log m} E_i} 
    \\
    \;\eqqcolon&\;
    I_- + I_+
    \;.
\end{align*}
The proof is an adaptation of Varadhan's lemma with an explicit computation of the error bounds, and consists of three steps:
\begin{enumerate}
    \item We show that $I_+$ is negligible with high probability for some sufficiently large $x_0$. This reduces the analysis to a bounded domain;
    \item We use a covering argument on the region $[-x_0, x_0]$ to split $I_-$ further into $(2K+1)$-many segments $(I_j)_{j= -K}^K$. On each segment, we control $I_j$ by its maximum and minimum over the segment, incurring an error that vanishes for a sufficiently large $K$;
    \item We show that provided that $K$ is not too large, only one of the $I_j$'s dominate, which can be computed to give the value of $I_-$ and therefore $\cE$.
\end{enumerate}
The concentration inequality is obtained by a careful choice of $x_0$ and $K$. 

\vspace{.5em}

Our first lemma shows that $I_+$ can be ignored with high probability. Write $A_+ \coloneqq \{ | E_i| \leq  x_0 \sqrt{\log m} \text{ for all } 1 \leq i \leq m\}$, and note that on the event $A_+$, $I_+ = 0$.

\begin{lemma} \label{lem:ignore:I:plus}  $
    1 - \P(A_+)
    \;=\;
    O\Big( m^{1 - \frac{x_0^2}{2} +o\big( \frac{1}{\sqrt{\log m}} \big)  } \Big)
    \;.
$
\end{lemma}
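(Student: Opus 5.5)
The plan is a union bound over the $m$ i.i.d.\ energies, combined with the two-sided moderate deviation tail estimate of \Cref{prop:nonuniform:CLT:crude} at the location $x = x_0\sqrt{\log m}$. Since the $E_i$ are identically distributed,
\begin{align*}
    1 - \P(A_+)
    \;\leq\;
    m \, \P\big( |E_1| > x_0 \sqrt{\log m} \big)
    \;=\;
    m \, \big( \P( E_1 > x_0\sqrt{\log m}) + \P( E_1 < -x_0\sqrt{\log m}) \big)
    \;.
\end{align*}
Both tails are then controlled by \Cref{prop:nonuniform:CLT:crude}, applied with $x = \pm x_0 \sqrt{\log m}$. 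Here $x_0$ is a fixed constant, or at least $x_0 = O(1)$, as it will be chosen later.

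First I will check the hypotheses. Because $\log m = o(N^{1/3})$ (an assumption of \Cref{prop:moderate:cold}), we have $x_0\sqrt{\log m} = o(N^{1/6})$. We also have $N = o(n^3)$, and $x_0\sqrt{\log m} = \omega(1)$ as $m \to \infty$. The second part of \Cref{prop:nonuniform:CLT:crude} then gives
\begin{align*}
    \P\big( E_1 > x_0\sqrt{\log m} \big)
    \;=\;
    (1+o(1))\, e^{-\frac{x_0^2 \log m}{2} + o(\sqrt{\log m})}\;,
\end{align*}
and the identical bound for $\P(E_1 < -x_0\sqrt{\log m})$.

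Next I rewrite the error in the exponent. We have $e^{o(\sqrt{\log m})} = m^{o(\sqrt{\log m})/\log m} = m^{o(1/\sqrt{\log m})}$, and the factor $2(1+o(1))$ is absorbed into the $O(\cdot)$. Multiplying by $m$ gives
\begin{align*}
    1 - \P(A_+) \;=\; O\big( m^{1 - \frac{x_0^2}{2} + o( 1/\sqrt{\log m})} \big)\;,
\end{align*}
as claimed.

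The only delicate point is the size of the error term $o(|x|)$ coming from \Cref{prop:nonuniform:CLT:crude}. It has two sources:
\begin{itemize}
    \item the Gaussian Mills-ratio factor $1/(\sqrt{2\pi}\,x) = e^{-O(\log\log m)}$, which is indeed $o(\sqrt{\log m})$ in the exponent;
    \item the multiplicative Cram\'er correction $1 + O\big(n^{-1} + N^{1/2}n^{-3/2} + (1+x^3)N^{-1/2}\big)$, which is $1 + o(1)$ under $x = o(N^{1/6})$ and $N = o(n^3)$.
\end{itemize}
So the exponent error is $o(\sqrt{\log m})$ uniformly. For this I need $x_0$ to stay bounded, or at least to satisfy $x_0\sqrt{\log m} \le aN^{1/6}$ with $x_0\sqrt{\log m} = o(N^{1/6})$. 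I will record this as a standing constraint on the later choice of $x_0$.
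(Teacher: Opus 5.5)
Your proposal is correct and is essentially the paper's own proof: a union bound over the $m$ i.i.d.\ energies, followed by the two tail estimates of \Cref{prop:nonuniform:CLT:crude} at $x = \pm x_0\sqrt{\log m}$, with the factor $e^{o(\sqrt{\log m})}$ rewritten as $m^{o(1/\sqrt{\log m})}$. You also verify explicitly the hypotheses the paper uses only implicitly: $x_0 = \Theta(1)$, $\log m = o(N^{1/3})$ and $N = o(n^3)$.
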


\begin{remark*} If $ x_0 > \sqrt{2}$ with $|x_0 - \sqrt{2}| = \Omega(1)$,  the bound in \Cref{lem:ignore:I:plus} is $o(1)$.
\end{remark*}

\begin{proof}
By a union bound followed by \Cref{prop:nonuniform:CLT:crude}, we have 
\begin{align*}
    1 - \P(A_+)
    \;\leq&\; 
    m\, \P( |E_1| > x_0 \sqrt{\log m}  )
    \\
    \;=&\;
     m\, \P( E_1 > x_0 \sqrt{\log m}  ) 
     +
     m\, \P( E_1 < - x_0 \sqrt{\log m}  )
    \\
    \;=&\;
    O\Big( m^{1 - \frac{x_0^2}{2} +o\big( \frac{1}{\sqrt{\log m}} \big)  } \Big)
    \;.
\end{align*}
\end{proof}

\vspace{.5em}

The next step is to simplify $I_-$ by a covering argument on $[-x_0,x_0]$. Let $K \in \N$ be chosen later, and split $[-x_0, x_0]$ into a disjoint union of $2K+1$ equal-size intervals $(\cB_j)_{j=-K}^K$:
\begin{align*}
    \cB_j 
    \;\coloneqq&\; 
    \Big[ \mfrac{2 j - 1}{2 K+ 1 } \, x_0 \;,\;    \mfrac{2 j + 1}{2 K+ 1 } \, x_0 \Big)
    \qquad \text{ for } -K \leq j \leq K-1
    \\
    \text{ and } \qquad
    \cB_K
    \;\coloneqq&\; 
    \Big[ \mfrac{2 K - 1}{2 K+ 1 } \, x_0 \;,\;    \mfrac{2 K + 1}{2 K+ 1 } \, x_0 \Big]
    \;.
\end{align*}
This allows us to write 
\begin{align*}
    I_- 
    \;=&\;
    \mfrac{1}{m}
    \msum_{i \leq m}
    \ind_{\{| E_i| \leq x_0 \sqrt{\log m} \}} \, e^{ - \beta \sqrt{\log m} E_i} 
    \\
    \;=&\;
    \msum_{j = - K}^K
    \Big(
        \mfrac{1}{m}
        \msum_{i \leq m}
        \ind_{\big\{ - \frac{E_i}{\sqrt{\log m}}   \in \cB_j\big\}} \, e^{ - \beta \sqrt{\log m} \, E_i} 
    \Big)
    \;\eqqcolon\;
    \msum_{j = - K}^K \, I_j
    \;.
\end{align*}
For $\cB \subseteq \R$, denote the empirical measure
\begin{align*}
    F_m(\cB) \;\coloneqq\; \mfrac{1}{m} \msum_{i \leq m} \ind_{\big\{ - \frac{E_i}{\sqrt{\log m}} \in \cB \big\}}   \;,
\end{align*}
and also denote the values of $x \beta \log m$ at the two endpoints of $I_j$ as
\begin{align*}
    y_{j,-} \;\coloneqq&\; \mfrac{2j-1}{2K+1} x_0  \beta \log m
    &\text{ and }&&
    y_{j,+} \;\coloneqq&\; \mfrac{2j+1}{2K+1} x_0 \beta \log m\;.
\end{align*}
By considering the maximum and minimum summand in $I_j$, we can control
\begin{align*}
    I_j 
    \;\leq&\;  
    F_m(\cB_j) 
    \,
    \max_{1 \leq i \leq m}  \,
    \Big\{ 
        \ind_{\big\{ - \frac{E_i}{\sqrt{\log m}} \in \cB_j \big\}}  
        e^{ - \beta \sqrt{\log m} E_i}  
    \Big\}
    \\
    \;\leq&\;
    F_m(\cB_j) 
    \,
    \sup_{x \in \cB_j} \,e^{ x \beta \log m}
    \;=\;
    F_m(\cB_j) 
    \,
    e^{ y_{j,+}}
    \;,
    \\
    I_j 
    \;\geq&\;  
    F_m(\cB_j) 
    \,
    \min_{1 \leq i \leq m}  \,
    \Big\{ 
        \ind_{\big\{ - \frac{E_i}{\sqrt{\log m}} \in \cB_j \big\}}  
        e^{ - \beta \sqrt{\log m} E_i}  
    \Big\}
    \\
    \;\geq&\;
    F_m(\cB_j) 
    \,
    \inf_{x \in \cB_j} \,e^{ x \beta \log m}
    \;=\;
    F_m(\cB_j) 
    \,
    e^{ y_{j,-}}
    \;.
\end{align*}
This implies 
\begin{align*}
    \msum_{j=-K}^K  F_m(\cB_j) \, e^{ y_{j,-}}
    \;\leq\; 
    I_- 
    \;\leq\; 
    \msum_{j=-K}^K  F_m(\cB_j) \, e^{ y_{j,+}}
    \;.
    \tagaligneq \label{eq:I:minus:tmp:control}
\end{align*}
To simplify \eqref{eq:I:minus:tmp:control}, we notice that every $F_m(\cB_j)$ is also an empirical average of $m$ i.i.d.~Bernoulli random variables, each with parameter 
\begin{align*}
    P(\cB_j) \;\coloneqq\; \P\Big( - \mfrac{E_1}{\sqrt{\log m}} \in \cB_j \Big)    \;.
\end{align*} 
This allows us to simplify \eqref{eq:I:minus:tmp:control} by exploiting the concentration of $F_m(\cB_j)$. In the next lemma, for $\epsilon > 0$, we denote the event
\begin{align*}
    A_\epsilon \;\coloneqq\; \big\{ | F_m(\cB_j)  - P(\cB_j) | \leq \epsilon \, P(\cB_j) \quad \text{ for all } \;\; -K \leq j \leq K   \big\}\;.
\end{align*}

\begin{lemma} \label{lem:conc:I:minus} Suppose $x_0 = \Theta(1)$ and $K = o(\sqrt{\log m})$. Then for any $\epsilon > 0$,
\begin{align*}
    1 - \P(A_\epsilon)
    \;=\; 
     O\Big( \epsilon^{-2} \, m^{ - 1 +  \frac{x_0^2}{2} \frac{(2K-1)^2}{(2K+1)^2}  + o(\frac{1}{\sqrt{\log m}}) }   \Big)
    \;.
\end{align*}
\end{lemma}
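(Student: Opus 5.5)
The plan is to apply Chebyshev's inequality to each $F_m(\cB_j)$ separately and then take a union bound over $-K \leq j \leq K$. Since $m F_m(\cB_j)$ is Binomial$(m, P(\cB_j))$, we have
\begin{align*}
    \P\big( |F_m(\cB_j) - P(\cB_j)| > \epsilon P(\cB_j) \big)
    \;\leq\;
    \mfrac{P(\cB_j)(1-P(\cB_j))}{m \epsilon^2 P(\cB_j)^2}
    \;\leq\;
    \mfrac{1}{\epsilon^2 \, m \, P(\cB_j)}\;.
\end{align*}
A union bound then gives $1 - \P(A_\epsilon) \leq \epsilon^{-2} m^{-1} \sum_{j=-K}^K P(\cB_j)^{-1}$. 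So the whole task reduces to a lower bound on $\min_j P(\cB_j)$ that is uniform in $j$.

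The lower bound on $P(\cB_j)$ comes from \Cref{prop:nonuniform:CLT:crude}. We have $P(\cB_j) = \P(-E_1/\sqrt{\log m} \in \cB_j)$. For a block with $j \geq 1$, write $a_j = \frac{2j-1}{2K+1}x_0\sqrt{\log m}$ and $b_j = \frac{2j+1}{2K+1}x_0\sqrt{\log m}$; then $P(\cB_j) = \P(E_1 \leq -a_j) - \P(E_1 \leq -b_j)$. The blocks with $j \leq -1$ are symmetric, using upper tails of $E_1$. The block $j=0$ contains the origin and has probability $1 - O(\text{tails}) = \Theta(1)$.

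The locations here are $O(\sqrt{\log m}) = o(N^{1/6})$, because $\log m = o(N^{1/3})$. So the Cram\'er approximation $\P(E_1 < -x) = \Phi(-x)(1+\delta(x))$ applies, with $\delta(x) = O\big(\frac1n + \frac{N^{1/2}}{n^{3/2}} + \frac{1+x^3}{\sqrt N}\big) = o(1)$. This gives
\begin{align*}
    P(\cB_j) \;=\; \Phi^c(a_j)(1+o(1)) - \Phi^c(b_j)(1+o(1))\;.
\end{align*}
The interval width is $b_j - a_j = \frac{2x_0\sqrt{\log m}}{2K+1}$. Since $K = o(\sqrt{\log m})$, this width tends to $\infty$. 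Hence $\Phi^c(b_j)/\Phi^c(a_j) \leq e^{-a_j(b_j-a_j)} \to 0$ uniformly for $j \geq 1$, because $a_j(b_j-a_j) \geq \frac{2x_0^2\log m}{(2K+1)^2} \to \infty$. The same limit gives $P(\cB_j) \geq \tfrac12 \Phi^c(a_j)$, and Gaussian tail bounds then yield $P(\cB_j) \geq e^{-a_j^2/2 - O(\log a_j)}$.

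The main obstacle is the uniformity of the $o(1)$ error terms: we need the ratio of the two Cram\'er errors not to destroy the difference. Uniformity holds because every block location is at most $x_0\sqrt{\log m} = o(N^{1/6})$ and the relative errors are uniform in $x$ on that range. Since $a_j^2 \leq \frac{(2K-1)^2}{(2K+1)^2} x_0^2 \log m$, the uniform bound is
\begin{align*}
    \min_j P(\cB_j) \;\geq\; m^{-\frac{x_0^2}{2}\frac{(2K-1)^2}{(2K+1)^2} - O(\log\log m/\log m)}\;.
\end{align*}
This exponent is also where the paper's $o(1/\sqrt{\log m})$ term enters, since the Gaussian-tail $o(|x|)$ correction at $|x| \sim \sqrt{\log m}$ contributes $m^{o(1/\sqrt{\log m})}$; it should be checked against the precise form of \Cref{prop:nonuniform:CLT:crude}.

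The factor $2K+1$ from the union bound equals $m^{O(\log K/\log m)}$. Because $K = o(\sqrt{\log m})$, this is $m^{o(1/\sqrt{\log m})}$ and is absorbed into the exponent. Combining the three steps gives
\begin{align*}
    1-\P(A_\epsilon) \;=\; O\big(\epsilon^{-2} m^{-1+\frac{x_0^2}{2}\frac{(2K-1)^2}{(2K+1)^2}+o(1/\sqrt{\log m})}\big)\;.
\end{align*}
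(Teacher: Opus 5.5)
Your proposal is correct. It follows the paper's overall route: Chebyshev applied to each binomial count $mF_m(\cB_j)$, a union bound over the $2K+1$ blocks, and \Cref{prop:nonuniform:CLT:crude} to lower-bound each $P(\cB_j)$.

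The one genuine difference is how you control the resulting sum $\sum_{j}(mP(\cB_j))^{-1}$.
\begin{itemize}
\item The paper keeps the individual terms $m^{-1+\frac{(2j\mp1)^2x_0^2}{2(2K+1)^2}}$. Using $(2K-1)^2-(2j-1)^2\geq(2K-2j)^2$ and a comparison with a Gaussian integral, it shows the sum is at most $O\big(1+\frac{2K+1}{x_0\sqrt{\log m}}\big)=O(1)$ times its largest ($j=\pm K$) term.
\item You bound every term by the worst one and pay a factor $2K+1=m^{O(\log\log m/\log m)}=m^{o(1/\sqrt{\log m})}$. The statement's exponent already absorbs this factor.
\end{itemize}

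Your shortcut costs nothing for the stated bound. The Mills-ratio factor $a_K\asymp\sqrt{\log m}$ in your lower bound on $P(\cB_{\pm K})$ is already at least as large as $2K+1$. So the paper's finer summation does not improve the exponent.

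Your argument is also more explicit about uniformity in $j$. You use $\Phi^c(b_j)/\Phi^c(a_j)\leq e^{-a_j(b_j-a_j)}$ with $a_j(b_j-a_j)\geq\frac{2x_0^2\log m}{(2K+1)^2}\to\infty$, whereas the paper only asserts $\tilde P(y_{j,+})\ll\tilde P(y_{j,-})$.
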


\begin{remark*} Note that if $x_0 > \sqrt{2}$ with $|x_0 - \sqrt{2}| = \Omega(1)$ and $K = \omega(1)$, the bound in \Cref{lem:conc:I:minus} is $o(1)$ for every fixed $\epsilon > 0$.
\end{remark*}

\begin{proof} By a union bound followed by Markov's inequality, we have that
\begin{align*}
    1 - \P(A_\epsilon)
    \;\leq&\;
    \msum_{j= - K}^K \P\big(  | F_m(\cB_j)  - P(\cB_j) | > \epsilon \, P(\cB_j)   \big)
    \\
    \;\leq&\;
    \msum_{j= - K}^K \mfrac{\Var[ F_m(\cB_j) ]}{\epsilon^2 P(\cB_j)^2}
    \\
    \;=&\;
    \msum_{j= - K}^K \mfrac{P(\cB_j)\, (1- P(\cB_j))}{\epsilon^2 m P(\cB_j)^2}
    \;\leq\;
    \msum_{j= - K}^K \mfrac{1}{\epsilon^2 m P(\cB_j)}
    \;.
    \tagaligneq \label{eq:A:eps:init}
\end{align*}
Recalling the definition of $P(\cB_j)$, we can express
\begin{align*}
    m P(\cB_j)
    \;=&\;
    m \, \P\Big( \mfrac{y_{j,-}}{\beta \log m} \leq  - \mfrac{E_1}{\sqrt{\log m}} \leq \mfrac{y_{j,+}}{\beta \log m} \Big)
    \\
    \;=&\;
    m \, \P\Big(  - E_1 \geq \mfrac{y_{j,-}}{\beta \sqrt{\log m}} \Big)
    -
    m \, \P\Big(  - E_1 > \mfrac{y_{j,+}}{\beta \sqrt{\log m}} \Big)
    \\
    \;=&\;
    -
    m \, \P\Big( - E_1 < \mfrac{y_{j,-}}{\beta \sqrt{\log m}} \Big)
    +
    m \, \P\Big(  - E_1 \leq \mfrac{y_{j,+}}{\beta \sqrt{\log m}} \Big)
    \;.
\end{align*}
Using $x_0 = \Theta(1)$ and recalling the assumption that $\log m = o(N^{1/3})$, we get that 
\begin{align*}
    \max\Big\{ 
        \Big| \mfrac{y_{j,-}}{\beta \sqrt{\log m}}\Big|
        \,,\,
        \Big| \mfrac{y_{j,+}}{\beta \sqrt{\log m}}\Big|
    \Big\} 
    \;=\;
    O( x_0 \sqrt{\log m} ) 
    \;=\;
    o(N^{1/6})\;.
\end{align*}
This allows us to apply \Cref{prop:nonuniform:CLT:crude} to the expression above. Noting that $E_1$ is a continuous random variable, we obtain
\begin{align*}
    m P(\cB_j) \;=\;  m (\tilde P( y_{j,-} ) - \tilde P( y_{j,+} ) )
    \;,
    \tagaligneq \label{eq:PBj:tilde:P}
\end{align*}
where we have denoted
\begin{align*}
    \tilde P(y)
    \;\coloneqq&\;
    \begin{cases}
    (1+o(1))
    \,
    \exp\big( - \frac{y^2}{2 \beta^2 \log m} + o\big( \frac{y}{\beta \sqrt{\log m}} \big) \big)
    & \text{ if } y > 0 \text{ and } y = \omega(\beta \sqrt{\log m})\;,
    \\
    (1+o(1)) \, \Phi\big( - \frac{y}{\beta \sqrt{\log m}} \big)
    & \text{ if } y = O(\beta \sqrt{\log m})\;,
    \\
    1 - 
    (1+o(1))
    \,
    \exp\big( - \frac{y^2}{2 \beta^2 \log m} + o\big( \frac{y}{\beta \sqrt{\log m}} \big) \big)
    & \text{ if } y < 0 \text{ and } y = \omega(\beta\sqrt{\log m})\;,
    \end{cases}
    \\
    \;=&\;
    \begin{cases}
        \Theta\big( 
            \,\exp\big( - \frac{y^2}{2 \beta^2 \log m} + o\big( \frac{y}{\beta \sqrt{\log m}} \big) \big)\,    
        \big)
        &\text{ if } y > 0\;,
        \\
        1 - \Theta\big( 
            \,\exp\big( - \frac{y^2}{2 \beta^2 \log m} + o\big( \frac{y}{\beta \sqrt{\log m}} \big) \big)\,    
        \big)
        &\text{ if } y < 0\;.
    \end{cases}
\end{align*}
This allows us to compute $m P(\cB_j)$:
\begin{proplist}
    \item For $j \geq 1$, we have $y_{j,+} > y_{j,-} > 0$ and $\tilde P(y_{j,+}) \ll \tilde P(y_{j,-})$, which implies
    \begin{align*}
        m \,  P(\cB_j)
        \;=\;
        \Theta\Big(
            m \exp\Big( - \mfrac{y_{j,-}^2}{2 \beta^2 \log m} + o\Big( \mfrac{y_{j,-}}{\beta \sqrt{\log m}} \Big) \Big)
        \Big)
        \;=\;
        \Theta\bigg(
        m^{ 
            1 - \frac{(2j-1)^2 x_0^2}{2(2K+1)^2} 
            + o \big( \frac{1}{\sqrt{\log m}} \big)
        }
        \bigg)
        \;,
    \end{align*}
    where we have used $y_{j,-} = O(\beta\log m)$ and $\beta=\Omega(1)$ in the second equality;
    \item For $j \leq - 1$, we have $0 > y_{j,+} > y_{j,-}$ and $1 - \tilde P(y_{j,-}) \ll 1 -  \tilde P(y_{j,+})$, which implies
    \begin{align*}
        m \,  P(\cB_j)
        \;=\;
        \Theta\Big(
            m \exp\Big( - \mfrac{y_{j,+}^2}{2 \beta^2 \log m} + o\Big( \mfrac{y_{j,+}}{\beta \sqrt{\log m}} \Big) \Big)
        \Big)
        \;=\;
        \Theta\bigg(
        m^{ 
            1 - \frac{(2j+1)^2 x_0^2}{2(2K+1)^2} 
            + o \big( \frac{1}{\sqrt{\log m}} \big)
        }
        \bigg)
        \;;
    \end{align*}
    \item For $j = 0$,  we have $y_{0,-} < 0 <  y_{0,+}$,  $\tilde P(y_{0,+}) \ll \tilde P(y_{0,-})$, which implies
    \begin{align*}
        m \,  P(\cB_0)
        \;=\;
        m 
        +
        \Theta\Big(
            m \exp\Big( - \mfrac{y_{0,-}^2}{2 \beta^2 \log m} + o\Big( \mfrac{y_{0,-}}{\beta \sqrt{\log m}} \Big) \Big)
        \Big)
        \;=\;
        m(1+o(1))
        \;,
    \end{align*}
    where, in the second equality, we have used $K= o(\sqrt{\log m})$ and $x_0 = \Omega(1)$ to obtain that 
    \begin{align*}
        \mfrac{y_{0,-}^2}{2 \beta^2 \log m} 
        \;=\;
        \mfrac{x_0^2}{(2K+1)^2} \log m \;=\; \omega(1)\;.
    \end{align*}
\end{proplist} 
Substituting these calculations into \eqref{eq:A:eps:init}, we obtain
\begin{align*}
    1 - \P(A_\epsilon)
    &\;=\;
     O\bigg(
        \mfrac{1}{\epsilon^2 m^{1 + o(\frac{1}{\sqrt{\log m}})}}
        \,
        \bigg(
            \sum_{j=-K}^{-1}
            m^{ 
                \frac{(2j+1)^2 x_0^2}{2(2K+1)^2} 
            }
            +
            1
            +
            \sum_{j=1}^{K}
            m^{ 
                \frac{(2j-1)^2 x_0^2}{2(2K+1)^2} 
            }
        \bigg)
    \bigg) 
    \\
    &\;=\;
    O\bigg(
        \mfrac{1}{\epsilon^2 m^{1 + o(\frac{1}{\sqrt{\log m}})}}
        \,
        m^{\frac{x_0^2}{2} \frac{(2K-1)^2}{(2K+1)^2}  }
        \,
            \sum_{j=1}^{K}
            m^{ 
                \frac{- (2K-1)^2 + (2j-1)^2 }{2(2K+1)^2} 
                x_0^2
            }
    \bigg) 
    \\
    &\;\overset{(a)}{=}\;
    O\bigg(
        \mfrac{1}{\epsilon^2 m^{1 + o(\frac{1}{\sqrt{\log m}})}}
        \,
        m^{\frac{x_0^2}{2} \frac{(2K-1)^2}{(2K+1)^2} }
        \,
            \sum_{j=1}^{K}
            m^{ 
                -
                \frac{(2K-2j)^2 }{2(2K+1)^2} 
                x_0^2
            }
    \bigg) 
    \\
    &\;\overset{(b)}{=}\;
    O\bigg(
        \mfrac{1}{\epsilon^2 m^{1 + o(\frac{1}{\sqrt{\log m}})}}
        \,
        m^{\frac{x_0^2}{2} \frac{(2K-1)^2}{(2K+1)^2} }
        \,
            \bigg(
                1
                +
                \sum_{j=1}^{K-1}
                m^{ 
                    -
                    \frac{2 j^2 }{(2K+1)^2} 
                    x_0^2
                }
            \bigg)
    \bigg) 
    \\
    &\;=\;
    O\bigg(
        \mfrac{1}{\epsilon^2 m^{1 + o(\frac{1}{\sqrt{\log m}})}}
        \,
        m^{\frac{x_0^2}{2} \frac{(2K-1)^2}{(2K+1)^2} }
        \,
        \bigg(
            1
            +
            \mint_0^{K-1}
            e^{ 
                -
                \frac{2 u^2 }{(2K+1)^2} 
                x_0^2
                (\log m)
            }
            du
        \bigg)
    \bigg) 
    \\
    &\;\overset{(c)}{=}\;
    O\bigg(
        \mfrac{1}{\epsilon^2 m^{1 + o(\frac{1}{\sqrt{\log m}})}}
        \,
        m^{\frac{x_0^2}{2} \frac{(2K-1)^2}{(2K+1)^2}  }
        \,
        \Big(
            1
            +
            \mfrac{2K+1}{2x_0 \sqrt{\log m} }
        \Big)
    \bigg) 
    \\
    &\;\overset{(d)}{=}\; O\Big( \epsilon^{-2} \, m^{ - 1 +  \frac{x_0^2}{2} \frac{(2K-1)^2}{(2K+1)^2}  + o(\frac{1}{\sqrt{\log m}}) }   \Big)
    \;.
\end{align*}
In $(a)$, we have used that $-(2K-1)^2 + (2j-1)^2 = - (2K-2j)(2K+2j-2) \leq -(2K-2j)^2$; in $(b)$ we have used a change of index; in $(c)$ we have used a change-of-variable to bound the Gaussian integral; in $(d)$, we have used that $x_0 = \Theta(1)$ and that, since $K = o(\sqrt{\log m})$, 
\begin{align*}
    \mfrac{2K+1}{2x_0 \sqrt{\log m}}
    \;=\; 
    o(1)
    \;.
\end{align*}
\end{proof}

\vspace{1em}

Conditioning on $A_\epsilon$, we can now replace \eqref{eq:I:minus:tmp:control} by the bound
\begin{align*}
    (1-\epsilon)  I_-^\# \;\leq\; I_- \;\leq\; (1+\epsilon)  I_-^*\;,
\end{align*}
where 
\begin{align*}
        I_-^\# \;\coloneqq&\; \msum_{j=-K}^K   P(\cB_j) e^{ y_{j,-}}
    &\text{ and }&&
        I_-^* \;\coloneqq&\; \msum_{j=-K}^K  P(\cB_j) e^{ y_{j,+}}
    \;.
\end{align*}
To compute $I_-^\#$ and $I^*_-$, we recall that $\max\{ |y_{j,-}| , |y_{j,+}|\} = O(\beta \log m)$, and use \eqref{eq:PBj:tilde:P} with the more explicit formula for $\tilde P$ to obtain that 
\begin{align*}
    &\;P(\cB_j) 
    \;=\;
    \tilde P( y_{j,-} ) - \tilde P( y_{j,+} ) 
    \\
    &\;=\;
    \begin{cases}
        (1+o(1))
        \,
        \exp\big( - \frac{y_{j,-}^2}{2 \beta^2 \log m} + 
        o(\sqrt{\log m})
        \big)
        & \text{ if } j \geq 1 \;\;\text{ and }\;\; y_{j,-} = \omega(\beta \sqrt{\log m}) \;,
        \\[1em]
        (1+o(1)) \, \Phi\big( - \frac{y_{j,-}}{\beta \sqrt{\log m}} \big)
        & \text{ if } j \geq 1 \;\;\text{ and }\;\; y_{j,-} = O(\beta \sqrt{\log m}) \;,
        \\[1em]
        1 + o(1)
        &\text{ if } j = 0\;,
        \\[1em]
        (1+o(1))
        \;
        \Phi\big( - \frac{y_{j,+}}{\beta \sqrt{\log m}} \big)
        &\text{ if } j \leq - 1  \;\;\text{ and }\;\; y_{j,+} = O(\beta \sqrt{\log m}) \;, 
        \\[1em]
        (1+o(1))
        \;
        \exp\big( - \frac{y_{j,+}^2}{2 \beta^2 \log m} + o(\sqrt{\log m}) \big)
        &\text{ if } j \leq - 1  \;\;\text{ and }\;\; y_{j,+} = \omega(\beta \sqrt{\log m}) \;, 
    \end{cases}
    \\[1em]
    &\;=\;
    \begin{cases}
        \Theta\Big(  \exp\big( - \frac{y_{j,-}^2}{2 \beta^2 \log m} + o(\sqrt{\log m}) \big) \Big)
        & \text{ if } j \geq 1 \;,
        \\[1em]
        1 + o(1)
        &\text{ if } j = 0\;,
        \\[1em]
        \Theta\Big(  \exp\big( - \frac{y_{j,+}^2}{2 \beta^2 \log m} +o(\sqrt{\log m}) \big) \Big)
        &\text{ if } j \leq - 1  \;.
    \end{cases}
\end{align*}
The idea is that in both $I^\#_-$ and $I^*_-$, only one summand dominates: Informally, writing $y_j = \frac{y_{j,+} + y_{j,-}}{2}= \frac{2j}{2K+1} x_0 \beta \log m$, we have 
\begin{align*}
    P(\cB_j) \, e^{y_j} 
    \;\approx\; 
    \exp\Big( - \mfrac{y_j^2}{2 \beta^2 \log m} + y_j \Big)
    \;=\;
    \exp\Big( 
        - 
        \mfrac{(y_j - \beta^2 \log m )^2}{2 \beta^2 \log m} 
        +
        \mfrac{\beta^2 \log m}{2}
    \Big)
    \;,
\end{align*}
so if $\beta \in (0, x_0]$, the dominant term in both of the sums $I_-^\#$ and $I^*_-$ is the $j$-th term such that $y_j \approx \beta^2 \log m$. If $\beta > x_0$, the dominant term is the one with $j= K$. These two cases correspond to the two phases of $\bar \cE$ and $x_0$ will turn out to be the threshold at which phase transition happens.

\vspace{.5em}

To make this precise, observe that by definition 
\begin{align*}
    I^*_-
    \;=\;
    I^\#_-
    \,
    e^{ \frac{2 x_0 \beta \log m}{2K+1}  }
    \;,
    \tagaligneq \label{eq:relate:I:star:I:sharp}
\end{align*}
so it suffices to compute $I^\#_-$. We first identify the dominant term in $I^\#_-$ by defining the index 
\begin{align*}
    j^\# 
    \;\coloneqq\;
    \argmin_{-K \leq j \leq K}
    \big| y_{j,-} - \beta^2 \log m \big| 
    \;=\;
    \argmin_{-K \leq j \leq K}
    \Big| \mfrac{2j-1}{2K+1} x_0 - \beta \Big| 
    \;,
\end{align*} 
where the smaller index is taken as the $\argmin$ in the case of a tie, and denote 
\begin{align*}
    \delta_\# \;\coloneqq\;
    \mfrac{| y_{j^\#,-} - \beta^2 \log m |}{\beta \sqrt{\log m}} 
    \;.
\end{align*}
We seek to control the error
\begin{align*}
    \Delta_\#
    \;\coloneqq\; 
    \mfrac{\big| I_-^\# - P(\cB_{j^\#}) \, e^{y_{j^\#,-}} \big|}{P(\cB_{j^\#})  \, e^{y_{j^\#,-}}}
    \;\leq\;
    \msum_{j \neq j^\#}
    \mfrac{
        P(\cB_j)
        \,
        e^{y_{j,-}}
     }{P(\cB_{j^\#})  \, e^{y_{j^\#,-}}}
     \;.
\end{align*}
The next lemma provides a control on the individual terms.

\begin{lemma} \label{lem:compute:P:e} Suppose $x_0 = \Theta(1)$. The following bounds hold:
\begin{align*}
    P(\cB_{j^\#})  \, e^{y_{j^\#,-}}
    \;=&\;
    (1+o(1))
    \,
    \exp\Big( - \mfrac{\delta_\#^2}{2}   
    +
    \mfrac{\beta^2 \log m}{2} 
     + o(\sqrt{\log m}) \Big)
    \;,
    \tagaligneq \label{eq:calc:I:sharp:main}
    \\
    \mfrac{
        P(\cB_0)
        \,
        e^{y_{0,-}}
     }{P(\cB_{j^\#})  \, e^{y_{j^\#,-}}}
     \;=&\;
     O\Big( \exp\Big(  \mfrac{\delta_\#^2}{2}   
    -
    \mfrac{\beta^2 \log m}{2} 
     + o(\sqrt{\log m}) \Big) \Big)
     \;,
     \\
    \mfrac{
        P(\cB_j)
        \,
        e^{y_{j,-}}
     }{P(\cB_{j^\#})  \, e^{y_{j^\#,-}}}
   \;=&\;
    O\Big(
        \exp\Big( 
            - \mfrac{y_{j,+}^2}{2 \beta^2 \log m} 
            -
            \mfrac{\beta^2 \log m}{2} 
            + 
            \mfrac{\delta_\#^2}{2}   
            +
            o(\sqrt{\log m})
        \Big) 
    \,\Big)
    \quad
    \text{ for } j \leq -1 \;,
    \\
    \mfrac{
        P(\cB_j)
        \,
        e^{y_{j,-}}
     }{P(\cB_{j^\#})  \, e^{y_{j^\#,-}}}
    \;=&\;
     \Theta\Big( 
        \exp\Big( 
            - \mfrac{(y_{j,-} - \beta^2 \log m)^2}{2 \beta^2 \log m} 
            +
            \mfrac{\delta_\#^2}{2}    
            +
            o(\sqrt{\log m})
        \Big) 
    \Big)
    \quad
    \text{ for } j \geq 1 \text{ and } j \neq j^\# \;.
\end{align*}
\end{lemma}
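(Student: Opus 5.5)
All four bounds come from plugging the case-wise asymptotics for $P(\cB_j)$ (displayed just before the lemma) into $P(\cB_j)e^{y_{j,-}}$ and completing the square in the exponent. The basic identity is
\begin{align*}
    -\mfrac{y^2}{2\beta^2\log m} + y
    \;=\;
    -\mfrac{(y-\beta^2\log m)^2}{2\beta^2\log m} + \mfrac{\beta^2\log m}{2}\;.
\end{align*}
By the definition of $\delta_\#$, evaluating the first term at $y=y_{j^\#,-}$ gives $-\delta_\#^2/2$.

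\textbf{Main term.} First assume $j^\#\ge 1$ and $y_{j^\#,-}=\omega(\beta\sqrt{\log m})$. The first case of the displayed formula gives $P(\cB_{j^\#}) = (1+o(1))\exp(-y_{j^\#,-}^2/(2\beta^2\log m)+o(\sqrt{\log m}))$. Multiplying by $e^{y_{j^\#,-}}$ and applying the identity yields \eqref{eq:calc:I:sharp:main} directly.

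The remaining configurations need separate checks.
\begin{itemize}
    \item If $j^\#\ge1$ but $y_{j^\#,-}=O(\beta\sqrt{\log m})$, then $P=(1+o(1))\Phi(\cdot)$ with bounded argument. Here $\Phi$ and $e^{-y^2/(2\beta^2\log m)}$ are both $\Theta(1)$, so they agree up to $e^{o(\sqrt{\log m})}$.
    \item If $j^\#\le 0$, then $y_{j^\#,-}\le 0$. Since $\beta=\Omega(1)$ and $x_0=\Theta(1)$, this forces $\beta\lesssim x_0/(2K+1)$; indeed the argmin with $\beta>0$ lands at $j^\#\in\{0,1\}$. Then $\beta^2\log m/2$ and $\delta_\#^2/2$ differ by at most $O(x_0\beta\log m/K)$, and I would verify this fits within the stated error, or is excluded by the regime.
\end{itemize}
I expect this bookkeeping, namely that $P(\cB_j)\approx e^{-y_{j,-}^2/(2\beta^2\log m)}$ up to $e^{o(\sqrt{\log m})}$ uniformly across the boundary cases, to be the main nuisance rather than a real obstacle. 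The substitute $1+o(1)$ in place of a Gaussian factor is harmless whenever the exponent $y^2/(2\beta^2\log m)$ is $O(1)$.

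\textbf{Ratios.} Each ratio is the quotient of the individual asymptotic for $P(\cB_j)e^{y_{j,-}}$ by \eqref{eq:calc:I:sharp:main}.
\begin{itemize}
    \item For $j=0$: $P(\cB_0)=1+o(1)$ and $y_{0,-}<0$, so $P(\cB_0)e^{y_{0,-}}\le 1+o(1)$. Dividing by the main term gives $O(\exp(\delta_\#^2/2-\beta^2\log m/2+o(\sqrt{\log m})))$.
    \item For $j\le -1$: $P(\cB_j)=\Theta(\exp(-y_{j,+}^2/(2\beta^2\log m)+o(\sqrt{\log m})))$ and $e^{y_{j,-}}\le 1$ because $y_{j,-}<0$. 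Dropping that factor gives the upper bound stated with $O(\cdot)$, which explains why this case is only an upper bound.
    \item For $j\ge1$, $j\ne j^\#$: the $\Theta$ asymptotic for $P(\cB_j)$ combined with the completed square gives $\Theta(\exp(-(y_{j,-}-\beta^2\log m)^2/(2\beta^2\log m)+\beta^2\log m/2+o(\sqrt{\log m})))$. Dividing by the main term cancels $\beta^2\log m/2$ and produces $+\delta_\#^2/2$, which is exactly the claimed two-sided estimate.
\end{itemize}
In all cases the $(1+o(1))$ and $\Theta(1)$ prefactors are absorbed, and the $o(\sqrt{\log m})$ errors combine additively.
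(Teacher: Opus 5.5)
Your proposal is correct and follows the paper's own argument. You substitute the case-wise asymptotics for $P(\cB_j)$ into $P(\cB_j)e^{y_{j,-}}$ and complete the square to get $-\delta_\#^2/2+\beta^2\log m/2$. You then bound $P(\cB_0)e^{y_{0,-}}\le 1+o(1)$ and discard $e^{y_{j,-}}\le 1$ for $j\le -1$.

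The boundary cases you left open cannot occur. For $\beta>0$ the grid point $\frac{x_0}{2K+1}$ is strictly closer to $\beta$ than any nonpositive grid point, so $j^\#\ge 1$ always. Moreover, since $\beta=\Omega(1)$, one has $y_{j^\#,-}=\Theta(\beta^2\log m)$ if $\beta\le x_0$, or $y_{j^\#,-}=y_{K,-}=\Theta(\beta\log m)$ if $\beta>x_0$, and both are $\omega(\beta\sqrt{\log m})$. This is exactly the observation the paper uses to apply the first case of the formula directly.
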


\begin{proof} Observe that we either have $\beta \in (0,x_0]$ and $y_{j^\#,-}$ is close to $\beta^2 \log m = \Theta(x_0^2 \log m) = \Theta(\log m)$, or we have $\beta > x_0$ and $y_{j^\#, -} = y_{K,-} = \frac{2K-1}{2K+1} x_0 \beta \log m = \Theta(\beta \log m)$. In either case, $y_{j^\#,-}$ is positive, $\omega(1)$ and $O(\beta \log m)$. Plugging in the formula for $P(\cB_j)$ and using a completion-of-squares gives the formula for $P(\cB_{j^\#})  \, e^{y_{j^\#,-}}$. 
\begin{align*}
    P(\cB_{j^\#})  \, e^{y_{j^\#,-}}
    \;=&\;
    (1+o(1))
        \,
        \exp\Big( - \mfrac{y_{j^\#,-}^2}{2 \beta^2 \log m} + y_{j^\#,-} + o(\sqrt{\log m}) \Big)
    \\
    \;=&\;
    (1+o(1))
    \,
    \exp\Big( - \mfrac{\delta_\#^2}{2}   
    +
    \mfrac{\beta^2 \log m}{2} 
     + o(\sqrt{\log m}) \Big)
    \;.
\end{align*}
The bound for $j = 0$ follows immediately since $P(\cB_0) = (1+o(1))$ and $y_{0,-} = \frac{-1}{2K+1} x_0 \beta \log m < 0$. For $j \leq -1$, we note that $y_{j,-} < 0$ to compute
\begin{align*}
    \mfrac{
        P(\cB_j)
        \,
        e^{y_{j,-}}
     }{P(\cB_{j^\#})  \, e^{y_{j^\#,-}}}
     \;=&\;
    \Theta\Big(
        \exp\Big( 
            - \mfrac{y_{j,+}^2}{2 \beta^2 \log m} + y_{j,-}
            + 
            \mfrac{\delta_\#^2}{2}   
            -
            \mfrac{\beta^2 \log m}{2} 
            +
            o(\sqrt{\log m})
        \Big) 
    \,\Big)
    \\
   \;=&\;
    O\Big(
        \exp\Big( 
            - \mfrac{y_{j,+}^2}{2 \beta^2 \log m} 
            -
            \mfrac{\beta^2 \log m}{2} 
            + 
            \mfrac{\delta_\#^2}{2}   
            +
            o(\sqrt{\log m})
        \Big) 
    \,\Big)
    \;.
\end{align*}
For $j \geq 1$ with $j \neq j^\#$, we use a completion-of-squares to obtain
\begin{align*}
    \mfrac{
        P(\cB_j)
        \,
        e^{y_{j,-}}
     }{P(\cB_{j^\#})  \, e^{y_{j^\#,-}}}
     \;=&\;
    \Theta\Big(
        \exp\Big( 
            - \mfrac{y_{j,-}^2}{2 \beta^2 \log m} + y_{j,-}
            + 
            \mfrac{\delta_\#^2}{2}   
            -
            \mfrac{\beta^2 \log m}{2} 
            +
            o(\sqrt{\log m})
        \Big) 
    \,\Big)
    \\
    \;=&\;
     \Theta\Big( 
        \exp\Big( 
            - \mfrac{(y_{j,-} - \beta^2 \log m)^2}{2 \beta^2 \log m} 
            +
            \mfrac{\delta_\#^2}{2}    
            +
            o(\sqrt{\log m})
        \Big) 
    \Big)
    \;.
\end{align*}
\end{proof}

\vspace{.5em}

We are ready to use the control on $\Delta_\#$ to compute $I_-$ in the two cases where $\beta \in (0, x_0]$ and $\beta > x_0$. The first lemma focuses on $\beta \in (0,x_0]$.

\begin{lemma} \label{lem:I:minus:beta:smaller} Assume $x_0 = \Theta(1)$ and $K=\Theta((\log m)^{1/4})$. For $\beta \in (0, x_0]$, we have that conditioning on $A_\epsilon$ with $\epsilon \in (0,1)$,
\begin{align*}
    \Big| \mfrac{2 \log I_-}{\beta^2 \log m}   - 1\Big|
    \;=&\;
    O\Big( 
        \mfrac{\epsilon}{(1-\epsilon) \log m}
        +
        \mfrac{1}{(\log m)^{1/4}}
    \Big)
    \;.
\end{align*}
\end{lemma}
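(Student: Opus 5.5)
On $A_\epsilon$ we have $(1-\epsilon) I_-^\# \le I_- \le (1+\epsilon) I_-^*$. By \eqref{eq:relate:I:star:I:sharp}, $\log I_-^* = \log I_-^\# + \frac{2x_0\beta\log m}{2K+1}$. It therefore suffices to show
\begin{align*}
\log I_-^\# \;=\; \frac{\beta^2\log m}{2} + O\big(\beta(\log m)^{3/4}\big).
\end{align*}
The $\log(1\pm\epsilon)$ terms contribute $O(\epsilon/(1-\epsilon))$. After dividing by $\beta^2\log m/2$ they give the $\frac{\epsilon}{(1-\epsilon)\log m}$ term, using $\beta=\Theta(1)$ because $\beta\in(0,x_0]$ and $\beta=\Omega(1)$. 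The shift $\frac{2x_0\beta\log m}{2K+1}$ with $K=\Theta((\log m)^{1/4})$ is $O(\beta(\log m)^{3/4})$. Divided by $\beta^2\log m$ it is $O((\log m)^{-1/4})$.

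\textbf{Dominant term.} Write $I_-^\# = P(\cB_{j^\#})e^{y_{j^\#,-}}(1+\Delta_\#')$ with $0\le \Delta_\#'\le\Delta_\#$, so that $\log I_-^\# = \log(P(\cB_{j^\#})e^{y_{j^\#,-}}) + O(\log(1+\Delta_\#))$. Since $\beta\le x_0$ and the grid points $\frac{2j-1}{2K+1}x_0$ have spacing $\frac{2x_0}{2K+1}$, we have $|y_{j^\#,-}-\beta^2\log m|\le \frac{x_0\beta\log m}{2K+1}$. Hence $\delta_\#\le \frac{x_0\sqrt{\log m}}{2K+1}$ and $\delta_\#^2 = O(\sqrt{\log m})$. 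One caveat: if $\beta$ is below the first positive grid point, $j^\#$ could be $0$. This needs $\beta\lesssim x_0/K$, which contradicts $\beta=\Omega(1)$ once $K\to\infty$. So $j^\#\ge1$ and \eqref{eq:calc:I:sharp:main} applies, giving
\begin{align*}
\log\big(P(\cB_{j^\#})e^{y_{j^\#,-}}\big) \;=\; \frac{\beta^2\log m}{2} + O(\sqrt{\log m}) + o(\sqrt{\log m}),
\end{align*}
which is well within $O((\log m)^{3/4})$.

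\textbf{Controlling $\Delta_\#$.} This is the main obstacle. I would bound $\Delta_\#$ by a polynomial factor in $\log m$ times $e^{O(\sqrt{\log m})}$, so that $\log(1+\Delta_\#)=O(\sqrt{\log m})$. Showing $\Delta_\#=o(1)$ is not needed and probably not available, because the $o(\sqrt{\log m})$ errors from \Cref{prop:nonuniform:CLT:crude} are too coarse. Use \Cref{lem:compute:P:e} term by term.
\begin{itemize}
\item The $j=0$ term and the terms with $j\le -1$ are each $O(e^{\delta_\#^2/2-\beta^2\log m/2+o(\sqrt{\log m})})$. This is $o(1)$, since $\beta^2\log m=\Theta(\log m)$ dominates $\delta_\#^2=O(\sqrt{\log m})$. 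There are $K$ of them, so they sum to $o(1)$.
\item Each term with $j\ge1$, $j\ne j^\#$ is at most $\exp(\delta_\#^2/2+o(\sqrt{\log m}))$, because the squared deviation is nonnegative. I would keep the Gaussian decay in $|j-j^\#|$ to get a summable geometric-type bound. Even the crude bound $K e^{O(\sqrt{\log m})}$ suffices, since it gives $\log(1+\Delta_\#)=O(\sqrt{\log m}+\log K)$.
\end{itemize}
The one point needing care is that the $o(\sqrt{\log m})$ in \Cref{lem:compute:P:e} is uniform over $j$. This holds because all $|y_{j,\pm}|=O(\beta\log m)$ lie in the range where \Cref{prop:nonuniform:CLT:crude} applies uniformly.

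\textbf{Assembling.} Collect the errors: $O(\sqrt{\log m})$ from $\delta_\#$, $O(\sqrt{\log m})$ from $\Delta_\#$, $O(\beta(\log m)^{3/4})$ from the upper/lower shift, and $O(\epsilon/(1-\epsilon))$ from the event $A_\epsilon$. Dividing by $\beta^2\log m/2$ with $\beta=\Theta(1)$ yields the stated bound $O\big(\frac{\epsilon}{(1-\epsilon)\log m}+(\log m)^{-1/4}\big)$. The choice $K\asymp(\log m)^{1/4}$ balances the mesh error $\log m/K$ against $\delta_\#^2\lesssim \log m/K^2$ and the $o(\sqrt{\log m})$ terms.
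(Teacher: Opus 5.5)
Your proposal is correct and follows the paper's skeleton: the sandwich $(1-\epsilon)I_-^\# \le I_- \le (1+\epsilon)I_-^*$ on $A_\epsilon$, the shift relation \eqref{eq:relate:I:star:I:sharp}, and isolation of the $j^\#$ term via \Cref{lem:compute:P:e}. The one substantive difference is how you handle $\Delta_\#$.

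The paper sums Gaussian tails on either side of $j^\#$ to conclude $\Delta_\# = o(1)$, i.e.\ $I_-^\# = (1+o(1))P(\cB_{j^\#})e^{y_{j^\#,-}}$. You settle for $\Delta_\# \le o(1) + CKe^{\delta_\#^2/2+o(\sqrt{\log m})}$ and absorb $\log(1+\Delta_\#) = O(\sqrt{\log m})$ into the $(\log m)^{-1/4}$ error, which is all the lemma needs.

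Your weaker target is in fact the safer one. When the paper passes from \Cref{lem:compute:P:e} to its bound on $\Delta_\#$, it drops the common factor $e^{\delta_\#^2/2}$. That factor is only $e^{O(\sqrt{\log m})}$, so it cannot be absorbed into $e^{o(\sqrt{\log m})}$. With the factor restored, the nearest competitor of $j^\#$ can be of the same order as the main term. For instance, if $\beta^2\log m$ is equidistant from $y_{j^\#,-}$ and $y_{j^\#+1,-}$, the Gaussian exponents cancel and the ratio is of order one. So $\Delta_\# = o(1)$ fails in general, although, as your argument shows, the lemma's conclusion does not depend on it.

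Two minor slips, neither affecting the conclusion:
\begin{itemize}
\item Your half-spacing bound $|y_{j^\#,-}-\beta^2\log m| \le \frac{x_0\beta\log m}{2K+1}$ fails for $\beta\in\big(\frac{2K-1}{2K+1}x_0, x_0\big]$. The largest grid point is $\frac{2K-1}{2K+1}x_0$, not $x_0$. Use the full spacing $\frac{2x_0\beta\log m}{2K+1}$, as the paper does; this still gives $\delta_\#^2 = O(\sqrt{\log m})$.
\item $j^\# = 0$ never occurs for $\beta>0$, because $\frac{x_0}{2K+1}$ is always strictly closer to $\beta$ than $-\frac{x_0}{2K+1}$. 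Your caveat about this case is unnecessary.
\end{itemize}
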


\begin{proof} In the case $\beta \in (0, x_0]$, $y_{j^\#,-}$ is close to $\beta^2 \log m$. By noting that $y_{j,+} - y_{j,-} = \frac{2x_0 \beta \log m}{2K+1} $ for all $j$, we can control the approximation error as
\begin{align*}
    \delta_\# 
    \;=\;
    \mfrac{| y_{j^\#,-} - \beta^2 \log m |}{\beta \sqrt{\log m}} 
    \;\leq\; 
    \mfrac{2 x_0 \sqrt{\log m}}{2K+1} 
    \;=\; 
    O\Big(\mfrac{\sqrt{\log m}}{K} \Big) 
    \;=\; 
    O( (\log m)^{1/4} ) \;,
    \tagaligneq \label{eq:delta:sharp:bound:one}
\end{align*}
where, in the last inequality, we have used
$K=\Theta((\log m)^{1/4})$. \color{black} Summing the calculations of $P(\cB_j) e^{y_{j,-}} / P(\cB_{j^\#})  \, e^{y_{j^\#,-}}$ in \Cref{lem:compute:P:e}, we obtain
\begin{align*}
     \Delta_\#
    =
    O\bigg(
    e^{o( \sqrt{\log m}) }
    \bigg(
    \underbrace{ 
        \sum_{j=-K}^{-1}
    e^{
        - \frac{y_{j,+}^2}{2\beta^2 \log m} - \frac{\beta^2 \log m}{2}
    }
    }_{\eqqcolon \, \Delta_\#^- }
    +
    O\Big( e^{ - \frac{\beta^2 \log m}{2} }\Big)
    +
    \underbrace{ 
        \sum_{j=1}^K 
    \ind_{\{j \neq j^\#\}}
    e^{
        - \frac{ (y_{j,-} - \beta^2 \log m)^2 }{2 \beta^2 \log m}
    }
    }_{\eqqcolon \, \Delta_\#^+ }
    \bigg)
    \bigg)
    \;.
\end{align*}
We first control $\Delta^+_\#$. Notice that $(y_{j,-} - \beta^2 
\log m)^2$ is decreasing in $j$ for $j < j^\#$ and increasing in $j$ for $j > j^\#$. This allows us to control 
\begin{align*}
    &\Delta_\#^+
    \;=\;
    \ind_{\{ j^\# > 1\}}
    \exp\Big(
        - \mfrac{ (y_{j^\# - 1,-} - \beta^2 \log m)^2 }{2 \beta^2 \log m}
    \Big)
    +
    \ind_{\{ j^\# > 2 \}}
    \sum_{j=1}^{j^\#-2}
    \exp\Big(
        - \mfrac{ (y_{j,-} - \beta^2 \log m)^2 }{2 \beta^2 \log m}
    \Big)
    \\
    &\;\qquad
    +
    \ind_{\{ j^\# < K\}}
    \exp\Big(
        - \mfrac{ (y_{j^\# + 1,-} - \beta^2 \log m)^2 }{2 \beta^2 \log m}
    \Big)
    +
    \ind_{\{ j^\# < K-1\}}
    \sum_{j=j^\#+2}^K
    \exp\Big(
        - \mfrac{ (y_{j,-} - \beta^2 \log m)^2 }{2 \beta^2 \log m}
    \Big)
    \\
    &\;\leq\;
    \ind_{\{ j^\# > 1\}}
    \exp\Big(
        - \mfrac{ (y_{j^\# - 1,-} - \beta^2 \log m)^2 }{2 \beta^2 \log m}
    \Big)
    +
    \ind_{\{ j^\# > 2 \}}
    \int_1^{j^\#-1}
        \exp\Big(
            - \mfrac{ (y_{u,-} - \beta^2 \log m)^2 }{2 \beta^2 \log m}
        \Big)
    du
    \\
    &\;\quad
    +
    \ind_{\{ j^\# < K\}}
    \exp\Big(
        - \mfrac{ (y_{j^\# + 1,-} - \beta^2 \log m)^2 }{2 \beta^2 \log m}
    \Big)
    +
    \ind_{\{ j^\# < K-1\}}
    \int_{j^\#+1}^K
        \exp\Big(
            - \mfrac{ (y_{u,-} - \beta^2 \log m)^2 }{2 \beta^2 \log m}
        \Big)
    du
    \;.
\end{align*}
To control the terms above, we first recall that by the definition of $j^\#$,
\begin{align*}
    \mfrac{\beta^2 \log m - y_{j^\# - 1, -}}{\beta \sqrt{\log m}}
    \;\geq\; 
    \mfrac{| y_{j^\#,-} - \beta^2 \log m |}{\beta \sqrt{\log m}} 
    \qquad 
    \text{ and }
    \qquad 
    \mfrac{y_{j^\# + 1, -} - \beta^2 \log m }{\beta \sqrt{\log m}}
    \;\geq\;
    \mfrac{| y_{j^\#,-} - \beta^2 \log m |}{\beta \sqrt{\log m}} 
    \;,
\end{align*}
and therefore by the triangle inequality,
\begin{align*}
    \mfrac{\beta^2 \log m - y_{j^\# - 1, -}}{\beta \sqrt{\log m}}
    \;\geq&\; 
    \mfrac{\beta^2 \log m - y_{j^\# - 1, -}}{2 \beta \sqrt{\log m}}
    +
    \mfrac{| y_{j^\#,-} - \beta^2 \log m |}{2\beta \sqrt{\log m}} 
    \;\geq\;
    \mfrac{y_{j^\#,-} - y_{j^\# - 1, -}}{2\beta \sqrt{\log m}} 
    \;=\;
    \mfrac{x_0 \sqrt{\log m}}{2K+1}
    \;,
    \\
    \mfrac{y_{j^\# + 1, -} - \beta^2 \log m }{\beta \sqrt{\log m}}
    \;\geq&\;
    \mfrac{y_{j^\# + 1, -} - \beta^2 \log m }{2\beta \sqrt{\log m}}
    +
    \mfrac{| y_{j^\#,-} - \beta^2 \log m |}{2\beta \sqrt{\log m}} 
    \;\geq\;
    \mfrac{y_{j^\# + 1, -} - y_{j^\#,-}  }{2\beta \sqrt{\log m}}
    \;=\;
    \mfrac{x_0 \sqrt{\log m}}{2K+1}
    \;.
\end{align*} 
This implies 
\begin{align*}
    \max\Big\{ 
        \exp\Big(
        - \mfrac{ (y_{j^\# - 1,-} - \beta^2 \log m)^2 }{2 \beta^2 \log m}
        \Big) 
        \,,\,
        \exp\Big(
            - \mfrac{ (y_{j^\# + 1,-} - \beta^2 \log m)^2 }{2 \beta^2 \log m}
        \Big)
    \Big\} 
    \;=&\;
    O\Big( e^{ - \frac{x_0^2 \log m}{2(2K+1)^2}   }  \Big)
    \\
    \;=&\;
    O\Big( m^{ - \frac{x_0^2 }{2(2K+1)^2} }  \Big)
    \;.
\end{align*}
By recalling that $y_{j,-} = \frac{2j-1}{2K+1} x_0 \beta \log m$, we can compute
\begin{align*}
    &\;
    \int_1^{j^\#-1}
        \exp\Big(
            - \mfrac{ (y_{u,-} - \beta^2 \log m)^2 }{2 \beta^2 \log m}
        \Big)
    du
    \;=\;
    \int_1^{j^\#-1}
    e^{
        - \frac{1}{2} \big(\frac{2 u x_0 \sqrt{\log m}}{2K+1}  - \frac{x_0\sqrt{\log m}}{2K+1} -  \beta \sqrt{\log m} \big)^2
    }
    du
    \\
    &\;=\;
    \mfrac{2K+1}{2x_0 \sqrt{\log m}}
     \int_{y_{1, -} / (\beta \sqrt{\log m})}^{y_{j^\# - 1, -} / (\beta \sqrt{\log m})}
    e^{
        - \frac{1}{2} (u -  \beta \sqrt{\log m} )^2
    }
    du
    \\
    &\;\leq\;
    \mfrac{2K+1}{2x_0 \sqrt{\log m}} 
    \,
    \Phi\Big( - \Big( \beta \sqrt{\log m} - \mfrac{y_{j^\# - 1, -}}{\beta \sqrt{\log m}} \Big) \Big)
    \\
    &\;\leq\;
    \mfrac{2K+1}{2x_0 \sqrt{\log m}} 
    \,
    \Phi\Big( - \mfrac{x_0 \sqrt{\log m}}{2K+1} \Big)
    \\
    &\;=\;
    O\Big( \mfrac{K^2}{\log m } \, e^{ - \frac{x_0^2 \log m}{2(2K+1)^2}   }  \Big)
    \;=\;
    O\Big( m^{ - \frac{x_0^2 }{2(2K+1)^2} }  \Big)
\end{align*}
where we have applied a standard estimation of the Gaussian c.d.f.~and $K=\Theta((\log m)^{1/4})=O( \sqrt{\log m})$  in the last line. By the same argument, we obtain 
\begin{align*}
    \mint_{j^\#+1}^K
    \exp\Big(
            - \mfrac{ (y_{u,-} - \beta^2 \log m)^2 }{2 \beta^2 \log m}
        \Big)
    du
    \;=\;
    O\Big( m^{ - \frac{x_0^2 }{2(2K+1)^2} }  \Big)
    \;.
\end{align*}
Combining the bounds, we obtain 
\begin{align*}
    \Delta_\#^+ 
    \;=\; O\Big( m^{ - \frac{ x_0^2 }{2(2K+1)^2}  } \Big)
    \;.
\end{align*}
$\Delta_\#^-$ can be controlled by a similar argument:
\begin{align*}
    \Delta_\#^-
    \;=&\;
    e^{ - \frac{\beta^2 \log m}{2}  }
    \Big( 
        e^{- \frac{ y_{-1,+}^2  }{2 \beta^2 \log m}}
        +
        \msum_{j=-K}^{-2}
        e^{
            - \frac{ y_{j,+}^2  }{2 \beta^2 \log m}
        }
    \Big)
    \\
    \;\leq&\;
    e^{ - \frac{\beta^2 \log m}{2}  }
    \Big( 
        e^{- \frac{ y_{-1,+}^2  }{2 \beta^2 \log m}}
        +
        \mint_{-K}^{-1}
        e^{
            - \frac{ y_{u,+}^2  }{2 \beta^2 \log m}
        }
        du
    \Big)
    \\
    \;=&\;
    e^{ - \frac{\beta^2 \log m}{2}  }
    \Big( 
        e^{- \frac{ y_{-1,+}^2  }{2 \beta^2 \log m}}
        +
        \mint_{-K}^{-1}
        e^{
            - \frac{ (2u+1)^2  x_0^2 \log m }{2 (2K+1)^2 }
        }
        du
    \Big)
    \\
    \;=&\;
    e^{ - \frac{\beta^2 \log m}{2}  }
    \Big( 
        e^{- \frac{ y_{-1,+}^2  }{2 \beta^2 \log m}}
        +
        \mfrac{2K+1}{2x_0 \sqrt{\log m}}
        \mint_{-y_{-1,+} / (\beta \sqrt{\log m})}^{-y_{-K,+} / (\beta \sqrt{\log m})}
        e^{
            - \frac{u^2}{2 }
        }
        du
    \Big)
    \\
    \;\leq&\;
    e^{ - \frac{\beta^2 \log m}{2}  }
    \Big( 
        e^{- \frac{ y_{-1,+}^2  }{2 \beta^2 \log m}}
        +
        \mfrac{2K+1}{2x_0 \sqrt{\log m}}
        \Phi\Big( \mfrac{y_{-1,+}}{\beta \sqrt{\log m}} \Big)
    \Big)
    \\
    \;=&\;
    e^{ - \frac{\beta^2 \log m}{2}  }
    \Big( 
        e^{- \frac{ x_0^2 \log m  }{2 (2K+1)^2 }}
        +
        \mfrac{2K+1}{2x_0 \sqrt{\log m}}
        \Phi\Big( -  \mfrac{x_0 \sqrt{\log m}}{2K+1} \Big)
    \Big)
    \\
    \;=&\;
    O\Big(      
        e^{ - \frac{\beta^2 \log m}{2} - \frac{ x_0^2 \log m  }{2 (2K+1)^2 }  }
    \Big)
    \;=\;
    O\Big(      
        m^{ - \frac{\beta^2}{2} - \frac{ x_0^2  }{2 (2K+1)^2 }  }
    \Big)
    \;.
\end{align*}
Therefore
\begin{align*}
    \Delta_\# 
    \;=&\; 
     O\Big( 
        e^{o(\sqrt{\log m}) }
        \Big(
            \Delta_\#^-
            +
            O\big( e^{ - \frac{\beta^2 \log m}{2} } \big)
            +
            \Delta_\#^+
        \Big)
    \Big)
    \\
    \;=&\; 
     O\Big( 
        e^{o(\sqrt{\log m}) }
        \Big(
            m^{ - \frac{\beta^2}{2} }
            +
            m^{- \frac{ x_0^2  }{2 (2K+1)^2 }  }
        \Big)
    \Big)
    \;=\; o(1)\;.
\end{align*}
In the last line, we have used that $\beta = \Omega(1)$ and that since $K = \Theta((\log m)^{1/4})$,
\begin{align*}
    \mfrac{ x_0^2 \log m }{2 (2K+1)^2 }  \;=\; \Theta( \sqrt{\log m} )  \;.
\end{align*}
In this case, recalling the definition of $\Delta_\#$ and the computation \eqref{eq:calc:I:sharp:main}, we obtain
\begin{align*}
    I_-^\# 
    \;=&\; 
     (1+o(1)) \, 
     P(\cB_{j^\#})  \, e^{y_{j^\#,-}}
    \\
    \;=&\;
    (1+o(1))
    \,
    \exp\Big( - \mfrac{\delta_\#^2}{2}   
    +
    \mfrac{\beta^2 \log m}{2} 
     + o(\sqrt{\log m}) \Big)
    \;.
\end{align*}
Using additionally that $I_-^* = I_-^\# \,e^{ \frac{2 x_0 \beta \log m}{2K+1}  }$ from
\eqref{eq:relate:I:star:I:sharp}, we obtain 
\begin{align*}
    I_-
    \;\geq&\;
    (1- \epsilon)
    (1+o(1))
    \,
    \exp\Big( - \mfrac{\delta_\#^2}{2}   
    +
    \mfrac{\beta^2 \log m}{2} 
     + o(\sqrt{\log m}) \Big)
    \;,
    \\
    I_- 
    \;\leq&\;
    (1 + \epsilon)
    (1+o(1))
    \,
    \,
    \exp\Big( - \mfrac{\delta_\#^2}{2}   
    +
    \mfrac{\beta^2 \log m}{2} 
    +
    \mfrac{2 x_0 \beta \log m}{2K+1}
     + o(\sqrt{\log m}) \Big)
    \;.
\end{align*} 
Taking logarithm in both inequalities and rescaling both sides by $\frac{2}{\beta^2 \log m}$, we obtain
\begin{align*}
    \Big| \mfrac{2 \log I_-}{\beta^2 \log m}   - 1\Big|
    \;=&\;
    O\Big(
    \mfrac{\max\{|\log(1-\epsilon)|, |\log(1+\epsilon)|\} }{\beta^2 \log m}
    +
    \mfrac{\log(1 + o(1))}{\beta^2 \log m}
    +
    \mfrac{\delta_\#^2}{\beta^2 \log m}
    +
    \mfrac{1}{\beta K}
    \Big)
    \\
    &\;
    +
    o\Big( \mfrac{1}{\beta^2 \sqrt{\log m}} \Big)
    \\
    \;\overset{(a)}{=}&\;
    O\Big( 
        \mfrac{\epsilon}{(1-\epsilon) \log m}
        +
        \mfrac{o(1)}{\log m}
        +
        \mfrac{1}{ \sqrt{\log m}}
        +
        \mfrac{1}{(\log m)^{1/4}}
        +
        o\Big( \mfrac{1}{\sqrt{\log m}} \Big)
    \Big)
    \\
    \;=&\;
    O\Big( 
        \mfrac{\epsilon}{(1-\epsilon) \log m}
        +
        \mfrac{1}{(\log m)^{1/4}}
    \Big)
    \;.
\end{align*}
In $(a)$, we have recalled that  $\delta_\# = O((\log m)^{1/4})$ by \eqref{eq:delta:sharp:bound:one}, $K =\Theta((\log m)^{1/4})$ and $\beta = \Omega(1)$, and noted that for $\epsilon \in (0,1)$, $0 \leq \log(1+\epsilon) \leq \epsilon$ and $0 \geq\log(1-\epsilon) \geq  - \frac{\epsilon}{1-\epsilon}$.
\end{proof}

\vspace{1em}

The second lemma focuses on $\beta > x_0$.

\begin{lemma} \label{lem:I:minus:beta:larger} Assume $x_0 = \Theta(1)$ and $K=\Theta((\log m)^{1/4})$. For $\beta > x_0$, we have that conditioning on $A_\epsilon$ with $\epsilon \in (0,1)$,
\begin{align*}
    \Big| 
        \mfrac{ \log I_-}{x_0 \beta \log m - \frac{x_0^2}{2} \log m  } 
        \,-\, 
        1
    \Big|
    \;=&\;
    O\Big( 
        \mfrac{\epsilon}{(1-\epsilon)\log m}
        +
        \mfrac{1}{(\log m)^{1/4}}
    \Big)
    \;.
\end{align*}
\end{lemma}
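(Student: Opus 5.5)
The plan mirrors the proof of \Cref{lem:I:minus:beta:smaller}, with the dominant index now pinned at the right boundary. When $\beta > x_0$, we have $\frac{2j-1}{2K+1}x_0 < x_0 < \beta$ for every $j$, so the argmin defining $j^\#$ is attained at $j^\# = K$. Hence $y_{j^\#,-} = y_{K,-} = \frac{2K-1}{2K+1}x_0\beta\log m$. This is positive and $\Theta(\beta\log m)$, and it satisfies $y_{K,-} < \beta^2\log m$.

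The first step is to compute the main term directly rather than through $\delta_\#$. Using the formula for $P(\cB_K)$ from the display preceding \Cref{lem:compute:P:e},
\[
P(\cB_K)e^{y_{K,-}} = \Theta\Big(\exp\Big(-\tfrac{y_{K,-}^2}{2\beta^2\log m} + y_{K,-} + o(\sqrt{\log m})\Big)\Big).
\]
Since $\frac{y_{K,-}}{\beta\log m} = x_0 - \frac{2x_0}{2K+1}$, the exponent equals
\[
\Big(x_0\beta - \tfrac{x_0^2}{2}\Big)\log m + O\Big(\tfrac{(\beta+x_0)x_0\log m}{K}\Big) + o(\sqrt{\log m}).
\]

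The second step is to show that $\Delta_\# = o(1)$, using \Cref{lem:compute:P:e}.
\begin{itemize}
\item For $j\ge1$, $j\ne K$: the function $y\mapsto -(y-\beta^2\log m)^2/(2\beta^2\log m)$ is increasing for $y<\beta^2\log m$. So the terms $\exp(-(y_{j,-}-\beta^2\log m)^2/(2\beta^2\log m) + \delta_\#^2/2)$ decay geometrically-Gaussianly as $j$ moves away from $K$. The neighbour $j=K-1$ is suppressed by a factor $\exp(-\frac{x_0\sqrt{\log m}}{2K+1}(\delta_\# + \cdots))$. More precisely, the difference of squares is at least $\frac{2x_0\sqrt{\log m}}{2K+1}\cdot\delta_\#$ and also at least $\frac{(2x_0\sqrt{\log m})^2}{2(2K+1)^2}$, which is $\Theta(\sqrt{\log m})$ for $K=\Theta((\log m)^{1/4})$. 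This beats the $e^{o(\sqrt{\log m})}$ slack, and the remaining terms are handled by comparison with a Gaussian integral exactly as for $\Delta_\#^+$ before, giving $O(m^{-x_0^2/(2(2K+1)^2)})$.
\item For $j\le0$: the ratio carries $e^{-\beta^2\log m/2+\delta_\#^2/2}$, and $\beta^2 - \delta_\#^2/\log m = \beta^2 - (\beta - \frac{2K-1}{2K+1}x_0)^2 \ge x_0^2/2$ up to $O(1/K)$. So these terms are $O(m^{-c})$ once multiplied by at most $K+1$ summands, again as in the treatment of $\Delta_\#^-$.
\end{itemize}
I expect this step to be the main obstacle. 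The fact that $\delta_\#$ is no longer small must be absorbed using monotonicity of the squared distance, not a uniform bound on $\delta_\#$.

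The final step reads off the result. On $A_\epsilon$ we have $(1-\epsilon)I_-^\#\le I_-\le(1+\epsilon)I_-^\# e^{2x_0\beta\log m/(2K+1)}$, by \eqref{eq:relate:I:star:I:sharp}. Taking logs and dividing by $(x_0\beta - x_0^2/2)\log m$, which is $\ge \frac{x_0\beta}{2}\log m = \Theta(\beta\log m)$ since $\beta>x_0$, bounds the relative error by
\[
O\Big(\tfrac{\epsilon}{(1-\epsilon)\log m} + \tfrac{1}{K} + \tfrac{o(\sqrt{\log m})}{\log m}\Big) = O\Big(\tfrac{\epsilon}{(1-\epsilon)\log m} + (\log m)^{-1/4}\Big).
\]
Here the $O(\beta x_0\log m/K)$ shifts from the bin widths divide cleanly by $\Theta(\beta\log m)$, and $\beta,x_0=\Omega(1)$ absorbs the remaining constants.
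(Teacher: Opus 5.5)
Your proposal is correct and follows the paper's proof essentially step for step. You identify $j^\#=K$, control $\Delta_\#$ through \Cref{lem:compute:P:e} by splitting into $j\ge 1$ (monotonicity of $(y_{j,-}-\beta^2\log m)^2$ plus a Gaussian-integral comparison), $j=0$ and $j\le -1$ (where $\beta^2\log m-\delta_\#^2$ is of order $x_0^2\log m$), and then sandwich $I_-$ between $(1-\epsilon)I_-^\#$ and $(1+\epsilon)I_-^\#e^{2x_0\beta\log m/(2K+1)}$ before dividing by $(x_0\beta-\frac{x_0^2}{2})\log m\ge\frac{x_0\beta}{2}\log m$. The one point to write out carefully is the one you already flag: in the $j\le K-1$ sum, the factor $e^{\delta_\#^2/2}$ must be cancelled against the Gaussian tail (Mills ratio) at the standardized distance of $y_{K-1,-}$ from $\beta^2\log m$, as the paper does for $\tilde\Delta_\#^+$, rather than by reusing the uniform distance bound from \Cref{lem:I:minus:beta:smaller}.
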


\begin{proof} In the case $\beta > x_0$, $j^\# = K$ and $\delta_\#$ no longer satisfy the bound in \eqref{eq:delta:sharp:bound:one}. We sum the calculations of $P(\cB_j) e^{y_{j,-}} / P(\cB_{j^\#})  \, e^{y_{j^\#,-}}$  in \Cref{lem:compute:P:e}, express $\delta_\#$ explicitly and rearrange the terms as
\begin{align*}
    \Delta_\# 
    \;=&\;
    \Theta\bigg(
    \, 
    e^{o(\sqrt{\log m})}
    \,
    \bigg( 
        \underbrace{
            \sum_{j=-K}^{-1}
             \exp\Big( 
                - 
                \mfrac{y_{j,+}^2 - y_{j^\#, -}^2}{2 \beta^2 \log m} 
                +
                (y_{j,-} -  y_{j^\#, -}) 
            \Big) 
        }_{\tilde \Delta^-_\#}
        \,+\,
        \exp\Big(  
                \mfrac{y_{j^\#, -}^2}{2 \beta^2 \log m} 
                -
                y_{j^\#, -}
        \Big)
    \\
    &\hspace{7em}
        +
        \underbrace{
            \sum_{j=1}^{K-1}
            \exp\Big( 
                - 
                \mfrac{y_{j,-}^2 - y_{j^\#, -}^2}{2 \beta^2 \log m} 
                +
                (y_{j,-} -  y_{j^\#, -}) 
            \Big) 
        }_{\tilde \Delta^+_\#}
    \bigg)
    \,
    \bigg)
    \;.
\end{align*}
Recall that $y_{j^\#, -} = \frac{2K-1}{2K+1} x_0 \beta \log m$ and that, since $\beta > x_0$, 
\begin{align*}
    \beta - \mfrac{2K-1}{2K+1} \mfrac{x_0}{2}  
    \;>\; 
    \beta - \mfrac{x_0}{2} \;>\; \mfrac{x_0}{2}\;.
\end{align*}
We can control the second term above as
\begin{align*}
    \exp\Big(  
                \mfrac{y_{j^\#, -}^2}{2 \beta^2 \log m} 
                -
                y_{j^\#, -}
        \Big)
    \;=&\;
    \exp\Big(  
            \Big(
                \mfrac{y_{j^\#, -}}{2 \beta \log m} 
                -
                \beta
            \Big)
            \, 
            \mfrac{y_{j^\#, -}}{\beta}
    \Big)
    \\
    \;=&\;
    \exp\Big(  
            -
            \Big(
                \beta 
                -
                \mfrac{2K-1}{2K+1}
                \mfrac{x_0}{2} 
            \Big)
            \, 
            \mfrac{2K-1}{2K+1} x_0 \log m
    \Big)
    \\
    \;\leq&\;
    \exp\Big(  
            -
            \mfrac{x_0^2}{2} \mfrac{2K-1}{2K+1} \log m
    \Big)
    \;=\;
    m^{ - \frac{x_0^2}{2} \frac{2K-1}{2K+1} }
    \;.
\end{align*}
Meanwhile by noting that $(y_{j,-} - \beta^2 \log m)^2$ is decreasing in $j$,
\begin{align*}
    \tilde \Delta^+_\#
    \;=&\;
    \exp\Big( \mfrac{(  y_{j^\#,-} - \beta^2 \log m)^2}{2 \beta^2 \log m} \Big)
    \,
    \msum_{j=1}^{K-1}
    \exp\Big(  - \mfrac{(  y_{j,-} - \beta^2 \log m)^2}{2 \beta^2 \log m} \Big)
    \\
    \;\leq&\;
    \exp\Big( \mfrac{(  y_{j^\#,-} - \beta^2 \log m)^2}{2 \beta^2 \log m} \Big)
    \\
    &\quad \,\times\,
    \Big( 
        \exp\Big(  - \mfrac{(  y_{K-1,-} - \beta^2 \log m)^2}{2 \beta^2 \log m} \Big)
        +
        \mint_1^{K-1}
        \exp\Big(  - \mfrac{(  y_{u,-} - \beta^2 \log m)^2}{2 \beta^2 \log m} \Big)
        du
    \Big)
    \\
    \;=&\;
    \exp\Big( \mfrac{(  y_{j^\#,-} - \beta^2 \log m)^2}{2 \beta^2 \log m} \Big)
    \,\times\,
    \Big( 
        \exp\Big(  - \mfrac{(  y_{K-1,-} - \beta^2 \log m)^2}{2 \beta^2 \log m} \Big)
    \\
    &\hspace{11em}
        +
        \mint_1^{K-1}
            \exp\Big(  - \mfrac{( \frac{2u-1}{2K+1} x_0 \sqrt{\log m}  - \beta \sqrt{\log m})^2}{2} \Big)
        du
    \Big)
    \\
    \;=&\;
    \exp\Big( \mfrac{(  y_{j^\#,-} - \beta^2 \log m)^2}{2 \beta^2 \log m} \Big)
    \,\times\,
    \Big( 
        \exp\Big(  - \mfrac{(  y_{K-1,-} - \beta^2 \log m)^2}{2 \beta^2 \log m} \Big)
    \\
    &\hspace{11em}
        +
        \mfrac{2K+1}{2x_0 \sqrt{\log m}}
        \mint_{y_{1,-} / (\beta \sqrt{\log m}) - \beta \sqrt{\log m}}^{y_{K-1,-} / (\beta \sqrt{\log m}) - \beta \sqrt{\log m}}
            e^{- \frac{u^2}{2}}
        du
    \Big)
    \\
    \;\leq&\;
    \exp\Big( \mfrac{(  y_{j^\#,-} - \beta^2 \log m)^2}{2 \beta^2 \log m} \Big)
    \,\times\,
    \Big( 
        \exp\Big(  - \mfrac{(  y_{K-1,-} - \beta^2 \log m)^2}{2 \beta^2 \log m} \Big)
    \\
    &\hspace{13em}
        +
        \mfrac{2K+1}{2x_0 \sqrt{\log m}}
        \Phi\Big( - \mfrac{\beta^2 \log m - y_{K-1,-}}{\beta \sqrt{\log m}} \Big)
    \Big)
    \\
    \;=&\;
    \exp\Big( 
        \mfrac{(  y_{j^\#,-} - \beta^2 \log m)^2}{2 \beta^2 \log m} 
        - 
        \mfrac{(  y_{K-1,-} - \beta^2 \log m)^2}{2 \beta^2 \log m}
    \Big)
    \\
    &\qquad 
    \,\times\,
    \Big( 
        1
        +
        \mfrac{2K+1}{2x_0 \sqrt{\log m}}
        \, \mfrac{\beta \sqrt{\log m}}{\beta^2 \log m - y_{K-1,-}}
    \Big)
    \\
    \;\overset{(a)}{=}&\;
    O\Big( m^{ - \frac{2x_0^2}{(2K+1)^2} } \Big)
    \;.
\end{align*}
In $(a)$, we used $x_0 = \Theta(1)$, $K=\Theta((\log m)^{1/4})$, $y_{K-1,-} =  \frac{2K-3}{2K+1} x_0 \beta \log m$ and $\beta > x_0$ to compute 
\begin{align*}
    \mfrac{2K+1}{2x_0 \sqrt{\log m}}
    \, \mfrac{\beta \sqrt{\log m}}{\beta^2 \log m - y_{K-1,-}}
    \;=&\;
    \Theta\Big( \mfrac{1}{(\log m)^{1/4}} \mfrac{1}{\beta \sqrt{\log m} \,-\, \frac{2K-3}{2K+1} x_0 \sqrt{\log m} } \Big)
    \\
    \;=&\;
    \Theta\Big( \mfrac{1}{(\log m)^{1/4}} \mfrac{2K+1}{4 x_0 \sqrt{\log m}} \Big)
    \;=\;
    \Theta\Big( \mfrac{1}{\sqrt{\log m}} \Big)
    \;=\;
    o(1)
    \;,
\end{align*}
and also compute 
\begin{align*}
    &\;\mfrac{(  y_{j^\#,-} - \beta^2 \log m)^2}{2 \beta^2 \log m} 
    - 
    \mfrac{(  y_{K-1,-} - \beta^2 \log m)^2}{2 \beta^2 \log m}
    \\
    &\;=\;
    \mfrac{y_{K,-}^2 -  y_{K-1,-}^2}{2 \beta^2 \log m} 
    - 
    (y_{K,-} - y_{K-1,-})
    \\
    &\;=\;
    - 
    (y_{K,-} - y_{K-1,-}) \, \mfrac{ 2\beta^2 \log m - y_{K,-} - y_{K-1,-}}{2 \beta^2 \log m}  
    \\
    &\;=\;
    - 
    \mfrac{2 x_0 \beta \log m }{2K+1}
    \, \mfrac{ 2\beta^2 \log m - \frac{4K}{2K+1} x_0 \beta \log m }{2\beta^2 \log m} 
    \\ 
    &\;=\;
    - 
    \mfrac{2 x_0 \log m  }{2K+1}
    \,\Big( \beta - \mfrac{2K}{2K+1} x_0 \Big)
    \\
    &\;\leq\;
    - 
    \mfrac{2 x_0 \log m  }{2K+1}
    \,\Big( x_0 - \mfrac{2K}{2K+1} x_0 \Big)
    \;=\;
    - 
    \mfrac{2 x_0^2 \log m  }{(2K+1)^2}
    \;.
\end{align*}
Meanwhile, by using $y_{j,-} < y_{j,+}$ and applying a similar argument with the observation that $(  y_{j,+} - \beta^2 \log m)^2$ is decreasing in $j$, we obtain
\begin{align*}
    \tilde \Delta^-_\# 
    \;\leq&\;
    \msum_{j=-K}^{-1}
    \exp\Big( 
        - 
        \mfrac{y_{j,+}^2 - y_{j^\#, -}^2}{2 \beta^2 \log m} 
        +
        (y_{j,+} -  y_{j^\#, -}) 
    \Big) 
    \\
    \;=&\;
    \exp\Big( \mfrac{(  y_{j^\#,-} - \beta^2 \log m)^2}{2 \beta^2 \log m} \Big)
    \,
    \msum_{j=-K}^{-1}
    \exp\Big(  - \mfrac{(  y_{j,+} - \beta^2 \log m)^2}{2 \beta^2 \log m} \Big)
    \\
    \;\leq&\;
    \exp\Big( 
        \mfrac{(  y_{j^\#,-} - \beta^2 \log m)^2}{2 \beta^2 \log m} 
        - 
        \mfrac{(  y_{-1,+} - \beta^2 \log m)^2}{2 \beta^2 \log m}
    \Big)
    \\
    &\qquad 
    \,\times\,
    \Big( 
        1
        +
        \mfrac{2K+1}{2x_0 \sqrt{\log m}}
        \, \mfrac{\beta \sqrt{\log m}}{\beta^2 \log m - y_{-1,+}}
    \Big)
    \\
    \;\overset{(b)}{=}&\;
    O\Big( m^{ - \frac{2 K(K+2)x_0^2}{(2K+1)^2} } \Big)
    \;.
\end{align*}
In $(b)$, we have used $y_{-1,+} < 0$ to compute that
\begin{align*}
    \mfrac{2K+1}{2x_0 \sqrt{\log m}}
    \, \mfrac{\beta \sqrt{\log m}}{\beta^2 \log m - y_{-1,+}}
    \;=&\;
    O\Big( \mfrac{1}{(\log m)^{1/4}} \mfrac{1}{\beta \sqrt{\log m} } \Big)
    \;=\;
    o(1)
\end{align*}
and that 
\begin{align*}
    &\;\mfrac{(  y_{j^\#,-} - \beta^2 \log m)^2}{2 \beta^2 \log m} 
    - 
    \mfrac{(  y_{-1,+} - \beta^2 \log m)^2}{2 \beta^2 \log m}
    \\
    &\;=\;
    \mfrac{y_{K,-}^2 -  y_{-1,+}^2}{2 \beta^2 \log m} 
    - 
    (y_{K,-} - y_{-1,+})
    \\
    &\;=\;
    - 
    (y_{K,-} - y_{-1,+}) \, \mfrac{ 2\beta^2 \log m - y_{K,-} - y_{-1,+}}{2 \beta^2 \log m}  
    \\
    &\;=\;
    - 
    \mfrac{2K x_0 \beta \log m }{2K+1}
    \, \mfrac{ 2\beta^2 \log m - \frac{2K -2 }{2K+1} x_0 \beta \log m }{2\beta^2 \log m} 
    \\ 
    &\;=\;
    - 
    \mfrac{K x_0 \log m }{2K+1}
    \, 
    \Big(
     2\beta - \mfrac{2K -2 }{2K+1} x_0 
    \Big) 
    \\
    &\;\leq\;
    - 
    \mfrac{K x_0^2 \log m }{2K+1}
    \, 
    \mfrac{2K+4 }{2K+1} 
    \;\leq\;
    - 
    \mfrac{2 K(K+2) x_0^2 \log m  }{(2K+1)^2}
    \;.
\end{align*}
Combining the bounds, we obtain
\begin{align*}
    \Delta_\# \;=\; 
    O\Big( 
        e^{o(\sqrt{\log m})} 
        \,
        \Big(
            m^{ - \frac{2x_0^2}{(2K+1)^2} } 
            +
            m^{ - \frac{x_0^2}{2} \frac{2K-1}{2K+1} }
            +
            m^{ - \frac{2 K(K+2)x_0^2}{(2K+1)^2} }
        \Big)
    \Big)
    \;=\;
    o(1)\;,
\end{align*}
where we have noted that $\frac{\log m}{K^2} = \Theta(\sqrt{\log m})$. Again recalling the definition of $\Delta_\#$ and applying the computation \eqref{eq:calc:I:sharp:main}, we obtain
\begin{align*}
    I_-^\# 
    \;=&\; 
    (1+o(1))
    \,
    \exp\Big( - \mfrac{y_{K,-}^2}{2 \beta^2 \log m} + y_{K,-} + o(\sqrt{\log m}) \Big)
    \\
    \;=&\;
    (1+o(1))
    \,
    \exp\Big( - \mfrac{(2K-1)^2 x_0^2 \beta^2 (\log m)^2}{2 (2K+1)^2 \beta^2 \log m} 
    + 
    \mfrac{2K-1}{2K+1} x_0 \beta \log m
    + 
    o(\sqrt{\log m}) \Big)
    \\
    \;=&\;
    (1+o(1))
    \,
    \exp\Big( - \mfrac{(2K-1)^2 x_0^2 \log m}{2 (2K+1)^2 } 
    + 
    \mfrac{2K-1}{2K+1} x_0 \beta \log m
    + 
    o(\sqrt{\log m}) \Big)
    \;.
\end{align*}
Using additionally that $I_-^* = I_-^\# \,e^{ \frac{2 x_0 \beta \log m}{2K+1}  }$ from \eqref{eq:relate:I:star:I:sharp}, we obtain 
\begin{align*}
    I_-
    \;\geq&\;
    (1- \epsilon)
    (1+o(1))
    \,
    \exp\Big( - \mfrac{(2K-1)^2 x_0^2 \log m}{2 (2K+1)^2 } 
    + 
    \mfrac{2K-1}{2K+1} x_0 \beta \log m
    + 
    o(\sqrt{\log m}) \Big)
    \;,
    \\
    I_- 
    \;\leq&\;
    (1 + \epsilon)
    (1+o(1))
    \,
    \exp\Big( - \mfrac{(2K-1)^2 x_0^2 \log m}{2 (2K+1)^2} 
    + 
    x_0 \beta \log m
    + 
    o(\sqrt{\log m}) \Big)
    \;.
\end{align*} 
We again take logarithm in both inequalities, but now use the rescaling factor $(x_0 \beta \log m - \frac{x_0^2}{2} \log m )^{-1}$, which satisfies
\begin{align*}
    \mfrac{2}{x_0 \beta \log m } 
    \;<\;
    \mfrac{1}{x_0 \beta \log m - \frac{x_0^2}{2} \log m  } 
    \;<\; 
    \mfrac{2}{x_0^2 \log m }
\end{align*}
since $\beta > x_0$ and therefore 
\begin{align*}
    \mfrac{2}{x_0 \beta \log m }  \;=\; \Theta( (\log m)^{-1})\;.
\end{align*}
This gives 
\begin{align*}
    \Big| \;&\, 
        \mfrac{ \log I_-}{x_0 \beta \log m - \frac{x_0^2}{2} \log m  } 
        \,-\, 
        1
    \Big|
    \\
    \;=&\;
    O\Big(
    \mfrac{\max\{|\log(1-\epsilon)|, |\log(1+\epsilon)|\}}{\log m}
    +
    \mfrac{\log(1 + o(1))}{\log m}
    +
    o\Big( \mfrac{1}{\log m} \Big)
    \\
    &\qquad 
    +
    \mfrac{x_0 \beta \log m }{x_0 \beta \log m - \frac{x_0^2}{2} \log m }
    \,\times\,
    \Big| \mfrac{2K-1}{2K+1} - 1 \Big|
    +
    \mfrac{\frac{x_0^2 \log m}{2} }{x_0 \beta \log m - \frac{x_0^2}{2} \log m }
    \Big| \mfrac{(2K-1)^2}{(2K+1)^2}  - 1  \Big|
    \Big)
    \\
    \;=&\;
    O\Big(
    \mfrac{\epsilon}{(1-\epsilon) \log m}
    +
    o\Big( \mfrac{1}{\log m} \Big)
    +
    \mfrac{2 \beta}{2\beta - x_0 } \, \mfrac{1}{K}
    +
    \mfrac{x_0}{2\beta - x_0 } \, \mfrac{1}{K}
    \Big)
    \\
    \;=&\;
    O\Big(
    \mfrac{\epsilon}{(1-\epsilon) \log m}
    +
    o\Big( \mfrac{1}{\log m} \Big)
    +
    \mfrac{2 \beta}{\beta } \, \mfrac{1}{K}
    +
    \mfrac{x_0}{\beta } \, \mfrac{1}{K}
    \Big)
    \\
    \;=&\;
    O\Big( 
        \mfrac{\epsilon}{(1-\epsilon)\log m}
        +
        \mfrac{1}{(\log m)^{1/4}}
    \Big)
    \;.
\end{align*}
In the last two lines, we have used that $\beta > x_0$ and $K = \Theta((\log m)^{1/4})$. We have also used $\epsilon \in (0,1)$ such that $\log(1-\epsilon)$ is well-defined and for simplifying the terms $\log(1-\epsilon)$ and $\log(1+\epsilon)$.
\end{proof}

\vspace{1em}

We have all the ingredients to prove \Cref{prop:moderate:cold}.

\begin{proof}[Proof of \Cref{prop:moderate:cold}] Take $x_0 = \Theta(1)$ and $K= \Theta((\log m)^{1/4})$, which satisfy the conditions of \Cref{lem:ignore:I:plus,lem:conc:I:minus,lem:compute:P:e,lem:I:minus:beta:smaller,lem:I:minus:beta:larger}. For $\epsilon > 0$, \Cref{lem:ignore:I:plus,lem:conc:I:minus} provide the existence of an event $\tilde A_\epsilon \coloneqq A_+ \cap A_\epsilon$ such that 
\begin{align*}
    1 - \P(\tilde A_\epsilon) 
    \;=\; 
    O\Big( 
        m^{1 - \frac{x_0^2}{2} +o\big( \frac{1}{\sqrt{\log m}} \big)  }
        +    
        \epsilon^{-2} \, m^{ - 1 +  \frac{x_0^2}{2} \frac{(2K-1)^2}{(2K+1)^2}  + o(\frac{1}{\sqrt{\log m}}) }  
    \Big)
    \;.
\end{align*}
Conditioning on $\tilde A_\epsilon$, $I_+=0$, and by \Cref{lem:I:minus:beta:smaller,lem:I:minus:beta:larger},
\begin{align*}
    \Big| \mfrac{2 \log I_-}{\beta^2 \log m}   - 1\Big|
    \;=&\;
    O\Big( 
        \mfrac{\epsilon}{(1-\epsilon)  \log m}
        +
        \mfrac{1}{(\log m)^{1/4}}
    \Big)
    \qquad 
    \text{ for } \beta \in (0,x_0]\;,
    \\
    \Big| 
        \mfrac{ \log I_-}{x_0 \beta \log m - \frac{x_0^2}{2} \log m  } 
        \,-\, 
        1
    \Big|
    \;=&\;
    O\Big( 
        \mfrac{\epsilon}{(1-\epsilon)\log m}
        +
        \mfrac{1}{(\log m)^{1/4}}
    \Big)
    \qquad 
    \text{ for } \beta > x_0 
    \;.
\end{align*}
To combine the bounds, we choose 
\begin{align*}
    x_0 
    \;=&\;  
    \sqrt{2} 
    \, \mfrac{2K}{2K-1}
    \;.
\end{align*}
Noting that $K = \Theta((\log m)^{1/4})$, we get that  there exist some universal constants $C', c> 0$ such that
\begin{align*}
    1 - \P(\tilde A_\epsilon) 
    \;=&\; 
    O\Big( 
        m^{- \frac{1}{2K-1} +o\big( \frac{1}{\sqrt{\log m}} \big)  }
        +    
        \epsilon^{-2} \, m^{ - 1 +  \frac{(2K)^2}{(2K+1)^2}  + o(\frac{1}{\sqrt{\log m}}) }  
    \Big)
    \\
    \;\leq&\;\
    C'
    \big(
        e^{ - \frac{\log m}{2K-1} +o( \sqrt{\log m} )  }
        +    
        \epsilon^{-2} \, e^{ -  \frac{4K \log m}{(2K+1)^2}  + o(\sqrt{\log m}) }  
    \big)
    \\
    \;\leq&\;
    (1+ \epsilon^{-2}) e^{ - c  \, (\log m)^{3/4} }\;,
\end{align*}
where we have noted that the constant coefficient in front of the exponential can be removed by choosing a sufficiently small $c > 0$ since the LHS is bounded from above by $1$. Meanwhile under this choice of $x_0$, conditioning on $\tilde A_\epsilon$,  we can control $\log I_-$ in three cases:
\begin{proplist}
    \item $\beta \in (0, \sqrt{2}]$. In this case, $\beta \in (0, x_0)$, so 
    \begin{align*}
        \Big| \mfrac{\log I_-  - \frac{\beta^2}{2} \log m }{\beta \log m}   \Big|
        \;=&\;
        O\Big( 
            \mfrac{\epsilon}{(1-\epsilon) \log m}
            +
            \mfrac{1}{ (\log m)^{1/4}}
        \Big)
        \;;
    \end{align*}
    \item $\beta \in (\sqrt{2}, x_0]$. In this case, we apply the triangle inequality to obtain 
    \begin{align*}
        \Big| 
            \mfrac{\log I_- - (\sqrt{2} \beta \log m - \log m )
            }{\beta \log m}   
        \Big|
        \;\leq&\;
        \Big| 
            \mfrac{\log I_- - \frac{\beta^2}{2} \log m
            }{\beta \log m}   
        \Big|
        +
        \Big| 
            \mfrac{\beta}{2}
            -
            \Big(\sqrt{2} - \mfrac{1}{\beta}\Big)
        \Big|
        \\
        \;=&\;
        O\Big( 
            \mfrac{\epsilon}{(1-\epsilon)  \log m}
            +
            \mfrac{1}{(\log m)^{1/4}}
            +
            \mfrac{(\beta - \sqrt{2})^2}{2\beta} 
        \Big)
        \\
        \;=&\;
        O\Big( 
            \mfrac{\epsilon}{(1-\epsilon)  \log m}
            +
            \mfrac{1}{(\log m)^{1/4}}
            +
            \mfrac{(x_0 - \sqrt{2})^2}{2\beta} 
        \Big) 
        \\
        \;=&\;
        O\Big( 
            \mfrac{\epsilon}{(1-\epsilon)  \log m}
            +
            \mfrac{1}{(\log m)^{1/4}}
            +
            \mfrac{1}{(2K-1)^2} 
        \Big)
        \\
        \;=&\;
        O\Big( 
            \mfrac{\epsilon}{(1-\epsilon) \log m}
            +
            \mfrac{1}{(\log m)^{1/4}}
        \Big)
        \;,
    \end{align*}
    where we used $K = \Theta((\log m)^{1/4})$ in the last line;
    \item $\beta > x_0$. In this case, we apply the triangle inequality again to obtain
    \begin{align*}
        &\;
        \Big| 
            \mfrac{\log I_- - (\sqrt{2} \beta \log m - \log m )
            }{\beta \log m}   
        \Big|
        \\
        &\;\leq\;
        \Big| 
            \mfrac{\log I_- - (x_0 \beta \log m - \frac{x_0^2}{2} \log m )
            }{\beta \log m}   
        \Big|
        +
        \Big| 
            \mfrac{(x_0 \beta \log m - \frac{x_0^2}{2} \log m ) - (\sqrt{2} \beta \log m - \log m )
            }{\beta \log m}   
        \Big|
        \\
        &\;=\; 
        O\Big( 
            \mfrac{x_0 \beta \log m - \frac{x_0^2}{2} \log m}{\beta \log m}
            \Big(
                \mfrac{\epsilon}{(1-\epsilon)\log m}
                +
                \mfrac{1}{(\log m)^{1/4}}
            \Big)
            +
            |x_0 - \sqrt{2}|
            +
            \mfrac{|x_0^2 - 2|}{\beta}
        \Big)
        \\
        &\;=\; 
        O\Big( 
            \mfrac{\sqrt{2} \beta - 1}{\beta }
            \Big(
                \mfrac{\epsilon}{(1-\epsilon)\log m}
                +
                \mfrac{1}{(\log m)^{1/4}}
            \Big)
            +
            |x_0 - \sqrt{2}|
            +
            \mfrac{|x_0^2 - 2|}{\beta}
        \Big)
        \\
        &\;=\; 
        O\Big( 
            \mfrac{\epsilon}{(1-\epsilon)\log m}
            +
            \mfrac{1}{(\log m)^{1/4}}
        \Big)\;,
    \end{align*}
    where we used that $x_0 - \sqrt{2} = O\big( \frac{1}{(2K-1)} \big) = O( (\log m)^{-1/4} )$ as well as $\beta = \Omega(1)$ in the last line.
\end{proplist} 
Finally, recall that $e^{\cE - 2(N-1) \mu} = I_- + I_+$, that $2(N-1) \mu = - \frac{N-1}{n} = - \frac{\beta^2 \log m}{2}$ and that 
\begin{align*}
    \bar \cE
    \;=&\;
    \begin{cases}
            0
            &
            \text{ if }
            \beta \leq \sqrt{2}
            \;,
            \\
            - \frac{(\beta - \sqrt{2})^2}{2} \, \log m
            & 
            \text{ if } 
            \beta > \sqrt{2}
            \;.
    \end{cases}
\end{align*}
Conditioning on $\tilde A_\epsilon$, we have $I_+ = 0$ and therefore in the case $\beta > \sqrt{2}$,
\begin{align*}
    \mfrac{|\cE - \bar \cE|}{\beta \log m} 
    \;=&\;
    \Big| \mfrac{\cE - 2(N-1)\mu - \frac{\beta^2 \log m}{2} + \frac{(\beta - \sqrt{2})^2}{2} \, \log m}{\beta \log m}   \Big|
    \\
    \;=&\;
    \Big| \mfrac{\log I_- - (\sqrt{2} \beta \log m - \log m)}{\beta \log m}   \Big|
    \;=\;
    O\Big( 
            \mfrac{\epsilon}{(1-\epsilon)\log m}
            +
            \mfrac{1}{(\log m)^{1/4}}
        \Big)\;,
\end{align*}
and in the case $\beta \leq \sqrt{2}$ with $\beta = \Omega(1)$,
\begin{align*}
    \mfrac{|\cE - \bar \cE|}{\beta \log m} 
    \;=&\;
    \Big| \mfrac{\cE - 2(N-1)\mu - \frac{\beta^2 \log m}{2}}{\beta \log m}   \Big|
    \\
    \;=&\;
    \Big| \mfrac{\log I_- -  \frac{\beta^2 \log m}{2}}{\beta \log m}   \Big|
    \;=\;
    O\Big( 
            \mfrac{\epsilon}{(1-\epsilon)\log m}
            +
            \mfrac{1}{(\log m)^{1/4}}
        \Big)\;.
\end{align*}
In summary, we have shown that there exist some universal constants $C, c > 0$ such that for every $\epsilon > 0$,
\begin{align*}
    \P\Big( 
        \mfrac{|\cE - \bar \cE|}{\beta \log m} 
        \;>\;
        C \Big( \mfrac{\epsilon}{(1-\epsilon)\log m}
            +
            \mfrac{1}{(\log m)^{1/4}} \Big)
    \Big)
    \;\leq\; 
    (1+ \epsilon^{-2}) e^{ - c  \, (\log m)^{3/4} }\;.
\end{align*}
Dividing both sides of the inequality inside $\P(\argdot)$ by $\beta$ completes the proof.
\end{proof}

\section{Proofs for the main results} \label{sec:proof:main}

The main results can now be obtained by combining \Cref{cor:log:Hanson:Wright,lem:sup:to:pointwise} with the REM calculations in \Cref{prop:ultrahot,prop:moderate:cold}. Throughout this section, $c, C > 0$ are universal constants whose values change from line to line.

\vspace{.5em}

In all the proofs, we recall the observation \eqref{eq:REM:first:derivation} that, for any fixed $\theta \in \cS^{n-1}$,
\begin{align*}
    \log\Big( \mfrac{1}{m} \msum_{i \leq m}  \| X_{i,N-1:1} \theta  \|^2 \Big)
    \;\overset{d}{=}\;
    \log\Big( 
        \mfrac{1}{m} \msum_{i \leq m} 
    e^{2 \sum_{j=1}^{N-1} Y_{ij} }
    \Big)
    \;=\;
    \cE
    \;,
\end{align*}
where $(Y_{ij})_{1 \leq i \leq m, 1 \leq j \leq N-1}$ are i.i.d.~random variables each distributed as $\frac{1}{2} \log\big( \frac{1}{n} \chi^2_n \big)$, and $\cE$ is defined as in \Cref{sec:REM}. \Cref{prop:ultrahot,prop:moderate:cold} can therefore be restated for any fixed $\theta \in \cS^{n-1}$ as follows:
\begin{proplist}
    \item If $\beta = o(1)$, then for any $\epsilon \in (0,1)$, the following statement holds with probability $1- \epsilon^{-2} e^{ - c \log m }$:
    \begin{align*}
            \Big| \log\Big( \mfrac{1}{m} \msum_{i \leq m}  \| X_{i,N-1:1} \theta  \|^2 \Big) - \bar \cE \Big|
            \;\leq\;  
            C
            \Big(
                \mfrac{\epsilon}{1-\epsilon}    
                +
                \mfrac{\beta^2 \log m}{n}
            \Big)
        \;;
        \tagaligneq\label{eq:final:ultrahot}
    \end{align*}
    \item If $\beta = \Omega(1)$, $\log m = o(N^{1/3})$ and $N=o(n^3)$,  then for any $\epsilon \in (0,1)$, the following statement holds with probability $1 - (1+ \epsilon^{-2}) e^{ - c  \, (\log m)^{3/4} }$:
    \begin{align*}
            \mfrac{1}{\beta^2 \log m} \Big| \log\Big( \mfrac{1}{m} \msum_{i \leq m}  \| X_{i,N-1:1} \theta  \|^2 \Big)  - \bar \cE \Big|
            \;\leq\;
            C \Big( \mfrac{\epsilon}{(1-\epsilon) \beta \log m}
                +
                \mfrac{1}{\beta (\log m)^{1/4}}
            \Big)
            \;.
            \tagaligneq\label{eq:final:moderate:cold}
    \end{align*}
\end{proplist}
We also recall that $Z$ defined in \eqref{eq:limit} satisfies $Z = \frac{\bar \cE}{2}$.

\subsection{Proof of \Cref{thm:pointwise}} 
By \Cref{cor:log:Hanson:Wright}, we have that for every $\epsilon \in (0,1)$ and $\theta \in \cS^{n-1}$,
\begin{align*}
    \P\Big( 
        \Big|  
            \log \| X \theta \|
            -
            \mfrac{1}{2} \log  
            \Big(
                \mfrac{1}{m} \msum_{i \leq m} 
                \| X_{i,N-1:1} \theta \|^2
            \Big)
        \Big| 
        \;\geq\; 
        \mfrac{\epsilon}{2(1-\epsilon)}
    \Big)
    \;\leq\;
    2 e^{-c n \epsilon^2 }
    \;.
\end{align*}
Combining this with \eqref{eq:final:ultrahot} by a union bound and the triangle inequality, followed by renaming universal constants, we obtain that for $\beta = o(1)$ and any $\epsilon \in (0,1)$, with probability $1 - \epsilon^{-2} e^{ - c \log m } - 2 e^{-cn\epsilon^2}$, we have
\begin{align*}
    \big|  
        \log \| X \theta \|
        -
        Z
    \big| 
    \;\leq\; 
     C
    \Big(
        \mfrac{\epsilon}{1-\epsilon}
        +
        \mfrac{\beta^2 \log m}{n}
    \Big)
    \;.
\end{align*}
The same argument with \eqref{eq:final:moderate:cold} gives that, for $\beta = \Omega(1)$, $\log m = o(N^{1/3})$, $N=o(n^3)$ and any $\epsilon \in (0,1)$, with probability $1 - (1+ \epsilon^{-2}) e^{ - c  \, (\log m)^{3/4} } - 2e^{-cn\epsilon^2}$, we have
\begin{align*}
    \mfrac{1}{\beta^2 \log m}
    \big|  
        \log \| X \theta \|
        -
        Z
    \big| 
    \;\leq\;
    C \Big( \mfrac{\epsilon}{(1-\epsilon)  \beta \log m}
            +
                \mfrac{1}{  \beta (\log m)^{1/4}}
            \Big)
    \;.
\end{align*}\qed

\subsection{Proof of \Cref{thm:top:Lyapunov}}
Recall that by \Cref{lem:sup:to:pointwise}, there exists a universal constant $C' > 0$ such that, for any $\epsilon \in (0,1)$ and $\theta \in \cS^{n-1}$, we have
\begin{align*}
    \P\Big( 
        \big| 
            \log \| X \theta \|
            - 
            \log s_1(X)
        \big|  
        \,\geq\,
        \mfrac{1}{2} \log \Big( \mfrac{n}{\epsilon^2} \Big)
    \Big)
    \leq 
    C' \epsilon^{1/2}
    \;,
\end{align*}
where we have replaced $C^{1/2}$ in \Cref{lem:sup:to:pointwise} by $C'$ and recalled that 
\begin{align*}
    \log s_1(X) \;=\; \sup_{\theta' \in \cS^{n-1}} \log \| X \theta' \|\;.
\end{align*}
For any $\alpha > 0$, choose $\epsilon = n^{-\alpha}$. This implies 
\begin{align*}
    \P\Big( 
        \big| 
            \log \| X \theta \|
            - 
            \log s_1(X)
        \big|  
        \,\geq\,
        \mfrac{(1+2\alpha) \log n}{2} 
    \Big)
    \leq 
    C' n^{ - \alpha / 2 }\;.
\end{align*}
Combining this with \Cref{thm:pointwise} finishes the proof.  \qed

\vspace{3em}

\noindent
\textbf{Acknowledgements.} KHH gratefully acknowledges support from the UK Engineering and Physical Sciences Research Council (EPSRC) (Grant No.~EP/Y028783/1, Prob\_AI Hub). BH gratefully acknowledges support from a 2024 Sloan Fellowship in Mathematics, NSF CAREER grant DMS-2143754, NSF grant DMS-2133806, and DARPA AIQ grant (HR001124S0029). 

\bibliography{ref}

@article{hanin2021non,
  title={Non-asymptotic results for singular values of {G}aussian matrix products},
  author={Hanin, Boris and Paouris, Grigoris},
  journal={Geometric and Functional Analysis},
  volume={31},
  number={2},
  pages={268--324},
  year={2021},
  publisher={Springer}
}

@article{dorlas2001large,
  title={Large deviations and the random energy model},
  author={Dorlas, Teunis C and Wedagedera, Janak R},
  journal={International Journal of Modern Physics B},
  volume={15},
  number={01},
  pages={1--15},
  year={2001},
  publisher={World Scientific}
}

@article{bovier2002fluctuations,
  title={Fluctuations of the free energy in the {REM} and the $ p $-spin {SK} models},
  author={Bovier, Anton and Kurkova, Irina and L{\"o}we, Matthias},
  journal={The Annals of Probability},
  volume={30},
  number={2},
  pages={605--651},
  year={2002},
  publisher={Institute of Mathematical Statistics}
}

@article{derrida1981random,
  title={Random-energy model: An exactly solvable model of disordered systems},
  author={Derrida, Bernard},
  journal={Physical Review B},
  volume={24},
  number={5},
  pages={2613},
  year={1981},
  publisher={APS}
}

@article{hanin2025global,
  title={Global Universality of Singular Values in Products of Many Large Random Matrices},
  author={Hanin, Boris and Jiang, Tianze},
  journal={arXiv preprint arXiv:2503.07872},
  year={2025}
}

@article{hanin2020products,
  title={Products of many large random matrices and gradients in deep neural networks},
  author={Hanin, Boris and Nica, Mihai},
  journal={Communications in Mathematical Physics},
  volume={376},
  number={1},
  pages={287--322},
  year={2020},
  publisher={Springer}
}

@article{cramer1938nouveau,
  title={Sur un nouveau th{\'e}oreme-limite de la th{\'e}orie des probabilit{\'e}s},
  author={Cram{\'e}r, Harald},
  journal={Actualités Scientifiques et Industrielles},
  pages={5--23},
  volume={736},
  year={1938}
}

@book{petrov1975sums,
  title={Sums of independent random variables},
  author={Petrov, Valentin V},
  volume={82},
  year={1975},
  publisher={Springer Science \& Business Media}
}

@article{liu2023cramer,
  title={{C}ram{\'e}r-type moderate deviations under local dependence},
  author={Liu, Song-Hao and Zhang, Zhuo-Song},
  journal={The Annals of Applied Probability},
  volume={33},
  number={6A},
  pages={4747--4797},
  year={2023},
  publisher={Institute of Mathematical Statistics}
}

@book{vershynin2018high,
  title={High-dimensional probability: An introduction with applications in data science},
  author={Vershynin, Roman},
  volume={47},
  year={2018},
  publisher={Cambridge university press}
}

@article{liu2023lyapunov,
  title={{L}yapunov exponent, universality and phase transition for products of random matrices},
  author={Liu, Dang-Zheng and Wang, Dong and Wang, Yanhui},
  journal={Communications in Mathematical Physics},
  volume={399},
  number={3},
  pages={1811--1855},
  year={2023},
  publisher={Springer Nature BV}
}

@article{Ahn2022,
  author  = {Ahn, Andrew},
  title   = {Fluctuations of {$\beta$}-{J}acobi product processes},
  journal = {Probability Theory and Related Fields},
  volume  = {183},
  pages   = {57--123},
  year    = {2022},
}

@article{akemann2019integrable,
  title={From integrable to chaotic systems: Universal local statistics of {L}yapunov exponents},
  author={Akemann, Gernot and Burda, Zdzislaw and Kieburg, Mario},
  journal={Europhysics Letters},
  volume={126},
  number={4},
  pages={40001},
  year={2019},
  publisher={EDP Sciences, IOP Publishing and Societ{\`a} Italiana di Fisica}
}

@article{geman1980limit,
  title={A limit theorem for the norm of random matrices},
  author={Geman, Stuart},
  journal={The Annals of Probability},
  pages={252--261},
  year={1980},
  publisher={JSTOR}
}

@article{yin1988limit,
  title={On the limit of the largest eigenvalue of the large dimensional sample covariance matrix},
  author={Yin, Yong-Qua and Bai, Zhi-Dong and Krishnaiah, Pathak R},
  journal={Probability theory and related fields},
  volume={78},
  number={4},
  pages={509--521},
  year={1988},
  publisher={Springer}
}

@article{akemann2013products,
  title={Products of rectangular random matrices: singular values and progressive scattering},
  author={Akemann, Gernot and Ipsen, Jesper R and Kieburg, Mario},
  journal={Physical Review E—Statistical, Nonlinear, and Soft Matter Physics},
  volume={88},
  number={5},
  pages={052118},
  year={2013},
  publisher={APS}
}

@article{saada2024simple,
  title={A simple proof of almost sure convergence for the largest singular value of a product of {G}aussian matrices},
  author={Saada, Thiziri Nait and Naderi, Alireza},
  journal={arXiv preprint arXiv:2409.20180},
  year={2024}
}

@article{furstenberg1960products,
  title={Products of random matrices},
  author={Furstenberg, Harry and Kesten, Harry},
  journal={The Annals of Mathematical Statistics},
  volume={31},
  number={2},
  pages={457--469},
  year={1960},
  publisher={JSTOR}
}

@article{oseledets1968multiplicative,
  title={A multiplicative ergodic theorem. {C}haracteristic {L}japunov, exponents of dynamical systems},
  author={Oseledets, Valery Iustinovich},
  journal={Trudy Moskovskogo Matematicheskogo Obshchestva},
  volume={19},
  pages={179--210},
  year={1968},
  publisher={Moscow Mathematical Society}
}

@article{newman1986distribution,
  title={The distribution of {L}yapunov exponents: exact results for random matrices},
  author={Newman, Charles M},
  journal={Communications in mathematical physics},
  volume={103},
  number={1},
  pages={121--126},
  year={1986},
  publisher={Springer}
}

@article{cohen1984stability,
  title={The stability of large random matrices and their products},
  author={Cohen, Joel E and Newman, Charles M},
  journal={The Annals of Probability},
  pages={283--310},
  year={1984},
  publisher={JSTOR}
}

@article{isopi1992triangle,
  title={The triangle law for {L}yapunov exponents of large random matrices},
  author={Isopi, Marco and Newman, Charles M},
  journal={Communications in mathematical physics},
  volume={143},
  number={3},
  pages={591--598},
  year={1992},
  publisher={Springer}
}

@article{kargin2014largest,
  title={On the largest {L}yapunov exponent for products of {G}aussian matrices},
  author={Kargin, Vladislav},
  journal={Journal of Statistical Physics},
  volume={157},
  number={1},
  pages={70--83},
  year={2014},
  publisher={Springer}
}

@article{akemann2014universal,
  title={Universal distribution of {L}yapunov exponents for products of {G}inibre matrices},
  author={Akemann, Gernot and Burda, Zdzislaw and Kieburg, Mario},
  journal={Journal of Physics A: Mathematical and Theoretical},
  volume={47},
  number={39},
  pages={395202},
  year={2014},
  publisher={IOP Publishing}
}

@article{akemann2020universality,
  title={Universality of local spectral statistics of products of random matrices},
  author={Akemann, Gernot and Burda, Zdzislaw and Kieburg, Mario},
  journal={Physical Review E},
  volume={102},
  number={5},
  pages={052134},
  year={2020},
  publisher={APS}
}

@article{ahn2022extremal,
  title   = {Extremal singular values of random matrix products and {B}rownian motion on {GL}({N}, {$\mathbb{{C}}$})},
  author  = {Ahn, Andrew},
  journal = {Probability Theory and Related Fields},
  volume  = {187},
  number  = {3-4},
  pages   = {949--997},
  year    = {2023},
}

@article{haagerup2005new,
  title={A new application of random matrices: {$\operatorname{Ext}(C_{\mathrm{red}}^{\ast}(F_2))$} is not a group},
  author={Haagerup, Uffe and Thorbj{\o}rnsen, Steen},
  journal={The Annals of Mathematics},
  pages={711--775},
  year={2005},
  publisher={JSTOR}
}

@article{schultz2005non,
  title={Non-commutative polynomials of independent {G}aussian random matrices. {T}he real and symplectic cases.},
  author={Schultz, Hanne},
  journal={Probability theory and related fields},
  volume={131},
  number={2},
  pages={261--309},
  year={2005},
  publisher={Springer}
}

@incollection{van2025strong,
  author       = {van Handel, Ramon},
  title        = {The strong convergence phenomenon},
  booktitle    = {Current Developments in Mathematics, 2025},
  pages        = {177--261},
  publisher    = {International Press of Boston},
  address      = {Somerville, MA},
  year         = {2026},
}

@article{gotze2015asymptotic,
  title={Asymptotic spectra of matrix-valued functions of independent random matrices and free probability},
  author={G{\"o}tze, Friedrich and K{\"o}sters, Holger and Tikhomirov, Alexander},
  journal={Random Matrices: Theory and Applications},
  volume={4},
  number={02},
  pages={1550005},
  year={2015},
  publisher={World Scientific}
}

@article{kosters2015limiting,
  author  = {K{\"o}sters, Holger and Tikhomirov, Alexander},
  title   = {Limiting Spectral Distributions of Sums of Products of Non-Hermitian Random Matrices},
  journal = {Probability and Mathematical Statistics},
  volume  = {38},
  number  = {2},
  pages   = {359--384},
  year    = {2018},
  doi     = {10.19195/0208-4147.38.2.6}
}

@article{bordenave2011spectrum,
  author        = {Bordenave, Charles},
  title         = {On the spectrum of sum and product of non-{H}ermitian random matrices},
  journal       = {Electronic Communications in Probability},
  volume        = {16},
  pages         = {104--113},
  year          = {2011},
}

\end{document}